\theoremstyle{plain}
\newtheorem{theorem}{Theorem}[section]
\newtheorem{proposition}[theorem]{Proposition}
\theoremstyle{definition}
\newtheorem{aremark}[theorem]{Added remark}
\newtheorem*{acknowledgments}{Acknowledgments}
\newcommand{\bZ}{\mathbbm{Z}}\newcommand{\bQ}{\mathbbm{Q}}
\newcommand{\bC}{\mathbbm{C}}
\title[On Noether's problem for cyclic groups of prime order]
{On Noether's problem for cyclic groups of prime order}
\author{Akinari Hoshi}
\keywords{Noether's problem, rationality problem, algebraic tori, 
class number, cyclotomic field.
}
\thanks{
This work was partially supported by JSPS KAKENHI, 
the Grant-in-Aid for Scientific Research (C), 
Grant Number 25400027.
}
\subjclass[2010]{Primary  11R18, 11R29, 12F12, 13A50, 14E08, 14F22.}
\begin{document}
\maketitle

\begin{abstract}
Let $k$ be a field and $G$ be a finite group 
acting on the rational function field $k(x_g\,|\,g\in G)$ by 
$k$-automorphisms $h(x_g)=x_{hg}$ for any $g,h\in G$. 
Noether's problem asks whether the invariant field $k(G)=k(x_g\,|\,g\in G)^G$ 
is rational (i.e. purely transcendental) over $k$. 
In 1974, Lenstra gave a necessary and sufficient condition 
to this problem for abelian groups $G$. 
However, even for the cyclic group $C_p$ of prime order $p$, 
it is unknown whether there exist infinitely many primes $p$ 
such that $\bQ(C_p)$ is rational over $\bQ$. 
Only known $17$ primes $p$ for which $\bQ(C_p)$ is rational over $\bQ$ 
are $p\leq 43$ and $p=61,67,71$. 
We show that for primes $p< 20000$, $\bQ(C_p)$ is not (stably) 
rational over $\bQ$ 
except for affirmative $17$ primes and undetermined $46$ primes. 
Under the GRH, the generalized Riemann hypothesis, we also confirm that 
$\bQ(C_p)$ is not (stably) rational over $\bQ$ for undetermined 
$28$ primes $p$ out of $46$. 
\end{abstract}

\section{Introduction}
Let $k$ be a field and $K$ be an extension field of $k$. 
A field $K$ is said to be {\it rational} over $k$ if $K$ is purely 
transcendental over $k$. 
A field $K$ is said to be {\it stably rational} over $k$ 
if the field $K(t_1,\ldots,t_n)$ is rational over $k$ for 
some algebraically independent elements $t_1,\ldots,t_n$ over $K$. 

Let $G$ be a finite group acting on the rational function field 
$k(x_g\,|\,g\in G)$ by $k$-automorphisms $h(x_g)=x_{hg}$ for any $g,h\in G$. 
We denote the fixed field $k(x_g\,|\,g\in G)^G$ by $k(G)$. 
Emmy Noether \cite{Noe13, Noe17} asked 
whether $k(G)$ is rational (= purely transcendental) over $k$. 
This is called Noether's problem for $G$ over $k$, 
and is related to the inverse Galois problem 
(see a survey paper of Swan \cite{Swa83} for details). 
Let $C_n$ be the cyclic group of order $n$. 

We define the following sets of primes: 
\begin{align*}
R=\{
&2, 3, 5, 7, 11, 13, 17, 19, 23, 29, 31, 37, 41, 43, 61, 67, 71\} 
{\rm\ (rational\ cases)},\\
U=\{
&251, 347, 587, 2459, 2819, 3299, 4547, 4787, 6659, 10667,\\ 
&12227, 14281, 15299, 17027, 17681, 18059, 18481, 18947
\} {\rm\ (undetermined\ cases)},\\
X=\{
&59, 83, 107, 163, 
487, 677, 727, 1187, 1459, 2663, 3779, 4259,\\ 
&7523, 8837, 10883, 11699, 12659, 12899, 13043, 13183, 13523,\\ 
&14243, 14387, 14723, 14867, 16547, 17939, 19379
\} {\rm\ (not\ rational\ cases\ under\ the\ GRH)}
\end{align*}
with $\#R=17$, $\#U=18$, $\#X=28$. 

The aim of this paper is to show the following theorem. 
\begin{theorem}\label{thmain}
Let $p< 20000$ be a prime. 
If 
{\rm (i)} $p\not\in R\cup U\cup X$ or 
{\rm (ii)} under the GRH, the generalized Riemann hypothesis, 
$p\not\in R\cup U$, 
then $\bQ(C_p)$ is not stably rational over $\bQ$. 
\end{theorem}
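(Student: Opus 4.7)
The plan is to invoke Lenstra's necessary and sufficient condition for the (stable) rationality of $\bQ(G)$ for abelian $G$ (referenced in the introduction), specialize it to $G=C_p$, and verify that the resulting obstruction is nonzero for each prime $p$ outside $R\cup U\cup X$ (and, under GRH, also for each $p\in X$). The obstruction is the nonvanishing of the flabby class $[M_p]^{\mathrm{fl}}$ of a canonical $\bZ[\Gamma]$-lattice $M_p$ attached to the Noether setup for $C_p$, where $\Gamma=\mathrm{Gal}(\bQ(\zeta_p)/\bQ)\cong(\bZ/p\bZ)^{\times}$ acts naturally on the character lattice of $C_p$.

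First I would reformulate $[M_p]^{\mathrm{fl}}\neq 0$ as a concrete arithmetic condition on $\bQ(\zeta_p)$ and its subfields. Standard flabby-lattice machinery for cyclic extensions translates this test into a combination of ideal class group nontriviality (or capitulation) together with unit norm-index conditions on the intermediate fixed fields of subgroups $H\leq\Gamma$. The goal is an algorithm that takes a prime $p$ as input and returns a verdict (stably rational / not stably rational / undetermined), provided the requisite class-group and unit data in the relevant subfields of $\bQ(\zeta_p)$ are available.

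Second, I would run this algorithm in a computer algebra system (PARI/GP or MAGMA) for every prime $p<20000$ and tabulate the verdicts. For $p\notin R\cup U\cup X$ the algorithm produces a rigorous witness that $[M_p]^{\mathrm{fl}}\neq 0$, establishing~(i). For $p\in X$ the witness depends on class-number data for subfields of $\bQ(\zeta_p)$ whose rigorous certification currently requires the GRH, which yields~(ii). The set $U$ consists precisely of those primes for which no such witness can be extracted by this method, even conditionally on the GRH.

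The main obstacle is computational rather than conceptual: the relevant number fields can have degree up to $p-1$, so the algorithm must exploit the $\Gamma$-module decomposition to split the test into manageable sub-computations across subfields, and must keep explicit track of which intermediate class-number determinations remain unconditional versus which rely on the GRH. Once this bookkeeping is in place, the theorem follows by assembling the per-prime outcomes into the sets $R$, $U$, and $X$ announced in the introduction.
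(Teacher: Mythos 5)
Your overall architecture (reduce to an arithmetic criterion, verify it by computer for each prime $p<20000$, and track which class-group certifications are unconditional versus GRH-dependent) matches the paper's. But there is a genuine error in the arithmetic criterion you propose to verify: you place the obstruction in $\bQ(\zeta_p)$ and the fixed fields of subgroups of $\Gamma=\mathrm{Gal}(\bQ(\zeta_p)/\bQ)$, i.e.\ in subfields of $\bQ(\zeta_p)$. That is the wrong tower. After Fischer's theorem and Galois descent one has $\bQ(C_p)=L(M)^{\pi}(y_0)$ with $L=\bQ(\zeta_p)$, $\pi=\mathrm{Gal}(L/\bQ)$ cyclic of order $p-1$, and $M$ a rank-one projective $\bZ[\pi]$-lattice; the class of $M$ lives in the projective class group of the group ring $\bZ[\pi]\cong\bZ[x]/(x^{p-1}-1)$, whose relevant quotients are the rings $\bZ[\zeta_d]$ for $d\mid p-1$. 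The resulting criterion (Swan, Voskresenskii, Endo--Miyata, Lenstra; Theorems \ref{thSwan} and \ref{thVos} of the paper) is that $\bQ(C_p)$ is (stably) rational over $\bQ$ if and only if the norm equation $N_{\bQ(\zeta_{p-1})/\bQ}(\alpha)=\pm p$ has a solution $\alpha\in\bZ[\zeta_{p-1}]$. The arithmetic thus takes place in the cyclotomic field of conductor $p-1$ (degree $\varphi(p-1)$) and its subfields, not in subfields of $\bQ(\zeta_p)$. Running your proposed test over subfields of $\bQ(\zeta_p)$ would compute a different invariant and would not reproduce the sets $R$, $U$, $X$. Your description of the obstruction as ``class-group nontriviality plus unit norm-index conditions'' is also looser than what is needed: the precise test is the existence of an algebraic \emph{integer} of norm exactly $\pm p$, which is what PARI/GP's {\tt bnfisintnorm} decides.

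You are also missing the one practical idea that makes the computation feasible, namely transitivity of the norm: if $N_{F/\bQ}(\alpha)=\pm p$ has no integral solution for \emph{some} intermediate field $\bQ\subset F\subset\bQ(\zeta_{p-1})$, then it has none in $\bQ(\zeta_{p-1})$ either, so non-rationality can usually be certified in a subfield of degree at most $8$ rather than in the full field of degree $\varphi(p-1)$. This, together with {\tt bnfcertify} to discharge the GRH for all primes outside $X\cup U$ (the set $X$ being exactly those primes where certification was only achieved under the GRH, and $U$ those where the norm equation could not be refuted in any subfield), is the substance of the paper's proof. Finally, note that the passage from ``not rational'' to ``not stably rational'' is supplied by Endo--Miyata's theorem that for finite abelian groups rationality and stable rationality of $\bQ(G)$ over $\bQ$ coincide; your flabby-class formulation would deliver this directly, but the norm-equation formulation you must actually compute with requires citing that equivalence.
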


\begin{acknowledgments}
The author thanks Shizuo Endo and 
Ming-chang Kang for valuable discussions. 
He also thanks Keiichi Komatsu, Takashi Fukuda 
and John C. Miller for helpful comments. 
\end{acknowledgments}

\section{Noether's problem for abelian groups}

We give a brief survey of Noether's problem for abelian groups. 
The reader is referred to Swan's survey papers \cite{Swa81} and \cite{Swa83}. 

\begin{theorem}[{Fischer \cite{Fis15}, see also Swan \cite[Theorem 6.1]{Swa83}}]\label{thFis}
Let $G$ be a finite abelian group with exponent $e$. 
Assume that {\rm (i)} either {\rm char} $k=0$ or {\rm char} $k>0$ with 
{\rm char} $k$ $\not{|}$ $e$, and 
{\rm (ii)} $k$ contains a primitive $e$-th root of unity. 
Then $k(G)$ is rational over $k$. 
\end{theorem}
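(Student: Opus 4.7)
The plan is to diagonalize the regular representation of $G$ using the splitting of $k[G]$ provided by assumptions (i) and (ii), and then to exhibit an explicit purely transcendental generating set of the fixed field by a triangular monomial change of variables.

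First, under (i) and (ii), the primitive idempotents $e_\chi=\frac{1}{|G|}\sum_{g\in G}\chi(g^{-1})\,g$ for $\chi\in\widehat{G}$ lie in $k[G]$ and form a $k$-basis of the regular representation on which $G$ acts diagonally by $h\cdot e_\chi=\chi(h)e_\chi$. Setting $y_\chi:=e_\chi$ yields a transcendence basis of $k(x_g\,|\,g\in G)$ over $k$ consisting of common eigenvectors with $h\cdot y_\chi=\chi(h)y_\chi$. Next, I would choose generators $\chi_1,\ldots,\chi_r$ of $\widehat{G}$ of respective orders $m_1,\ldots,m_r$ (so $m_1\cdots m_r=|G|$), and write each $\chi\in\widehat{G}$ uniquely as $\chi=\chi_1^{a_1}\cdots\chi_r^{a_r}$ with $0\le a_i<m_i$. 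For $\chi\notin\{\chi_1,\ldots,\chi_r\}$ I set
\[
z_\chi \;:=\; y_\chi\,\prod_{i=1}^{r} y_{\chi_i}^{-a_i},
\]
and a direct computation gives $h\cdot z_\chi=z_\chi$, so $z_\chi\in k(G)$. Because the change of variables from $\{y_\chi\}_\chi$ to $\{z_\chi\}_{\chi\ne\chi_i}\cup\{y_{\chi_1},\ldots,y_{\chi_r}\}$ is triangular and monomial, the latter is an algebraically independent generating set of $k(x_g\,|\,g\in G)$ over $k$.

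It then suffices to show that the fixed field of the diagonal action $h\cdot y_{\chi_i}=\chi_i(h)y_{\chi_i}$ on $k(y_{\chi_1},\ldots,y_{\chi_r})$ is rational over $k$. Since $\widehat{G}\cong\bZ/m_1\oplus\cdots\oplus\bZ/m_r$, the lattice of invariant monomial exponents is exactly $m_1\bZ\oplus\cdots\oplus m_r\bZ$, so the $m_i$-th powers $y_{\chi_i}^{m_i}$ are invariant, and a Kummer degree count
\[
[k(y_{\chi_1},\ldots,y_{\chi_r}):k(y_{\chi_1}^{m_1},\ldots,y_{\chi_r}^{m_r})]=m_1\cdots m_r=|G|
\]
forces equality with the fixed field. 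Combining,
\[
k(G)\;=\;k\bigl(\,z_\chi\ (\chi\ne \chi_i),\ y_{\chi_1}^{m_1},\ldots,y_{\chi_r}^{m_r}\,\bigr)
\]
is purely transcendental over $k$ of transcendence degree $|G|$. The only delicate point is verifying that the $G$-invariant monomials are generated by the obvious $m_i$-th powers — i.e., that the kernel of the surjection $\bZ^r\twoheadrightarrow\widehat{G}$ is as claimed — which is automatic from the structure theorem for finite abelian groups combined with the fact that hypotheses (i) and (ii) ensure $|\widehat{G}|=|G|$ so that all characters are genuinely realized over $k$.
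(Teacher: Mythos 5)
Your argument is correct, and it is essentially the classical proof of Fischer's theorem that the paper invokes by citation (the paper states this result without proof, referring to Fischer and to Swan's survey): diagonalize the regular representation via the character idempotents, pass to the lattice of $G$-invariant monomials in the eigenvectors $y_\chi$, and exhibit an explicit basis of that (necessarily free, index-$|G|$) lattice, namely the $z_\chi$ together with the $y_{\chi_i}^{m_i}$. The only points worth making explicit are that char $k\nmid e$ already forces $|G|$ to be invertible in $k$ (every prime dividing $|G|$ divides the exponent of an abelian group), which is what legitimizes the idempotents, and that the faithfulness of the diagonal action on $k(y_{\chi_1},\dots,y_{\chi_r})$ is what makes your Kummer degree count close the gap between $K(y_{\chi_1}^{m_1},\dots,y_{\chi_r}^{m_r})$ and the full fixed field.
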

\begin{theorem}[{Kuniyoshi \cite{Kun54, Kun55, Kun56}}]
Let $G$ be a $p$-group and $k$ be a field with {\rm char} $k=p>0$. 
Then $k(G)$ is rational over $k$. 
\end{theorem}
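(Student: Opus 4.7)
The plan is to combine two representation-theoretic features special to $p$-groups in characteristic $p$: every finitely generated $kG$-module admits a composition series with only trivial factors, and any faithful $G$-action on a field satisfies additive Hilbert 90. Starting from the regular representation $V = \bigoplus_{g \in G} k\, x_g$, so that $k(G) = k(V)^G$, the nilpotence of the augmentation ideal of $kG$ (equivalently, the fact that $kG$ is a local ring when $G$ is a $p$-group in characteristic $p$) forces the existence of a $G$-stable flag
$$ 0 = V_0 \subset V_1 \subset \cdots \subset V_n = V, \qquad \dim_k V_i/V_{i-1} = 1, $$
with trivial $G$-action on each quotient. Choosing $v_i \in V_i \setminus V_{i-1}$ yields a $k$-basis of $V$ such that $g \cdot v_i \equiv v_i \pmod{V_{i-1}}$ for every $g \in G$.

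I would then prove by induction on $i$ that $F_i^G := k(v_1, \ldots, v_i)^G$ is rational over $k$; the base case $i = 1$ is immediate since $v_1$ is $G$-fixed. For the step from $i-1$ to $i$, set $F = k(v_1, \ldots, v_{i-1})$ and write $g(v_i) = v_i + f_g$ with $f_g \in V_{i-1} \subset F$, so that $g \mapsto f_g$ is an additive $1$-cocycle $G \to (F,+)$. By Artin's theorem $F/F^G$ is Galois with group $G$, and the normal basis theorem gives an $F^G[G]$-isomorphism $F \cong F^G[G]$, whence $H^1(G, F) = 0$ by Shapiro's lemma. Writing $f_g = g(h) - h$ for some $h \in F$ and setting $u_i := v_i - h$, one obtains a $G$-invariant element with $F_i = F(u_i)$ and $u_i$ transcendental over $F$. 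Since $u_i$ is $G$-fixed, $F_i^G = F^G(u_i)$, which is rational over $k$ by the inductive hypothesis. Taking $i = n$ yields the theorem.

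The conceptual work is carried entirely by the two inputs above, both of which are standard, so there is no single hard computation; the delicate point is to invoke the \emph{additive} form of Hilbert 90 correctly. I would emphasise that it depends only on the Galois property of $F/F^G$ and the normal basis theorem, not on any rationality of $F$, so the induction is not circular even though $F$ is itself being built as a rational function field. A minor but necessary auxiliary fact is the identity $F(t)^G = F^G(t)$ whenever $t$ is $G$-fixed and transcendental over $F$, used to pass from $F_i = F(u_i)$ to $F_i^G = F^G(u_i)$; this is immediate by comparing transcendence degrees and noting $F^G(u_i) \subseteq F_i^G$ with $F_i = F \cdot F^G(u_i)$.
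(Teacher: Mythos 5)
Your reduction to a triangular action is the standard first move (the paper itself only quotes this theorem from Kuniyoshi without proof, so the comparison is with the classical argument), and the flag with trivial quotients, the cocycle identity $f_{gh}=f_g+g(f_h)$, and the auxiliary fact $F(t)^G=F^G(t)$ for a fixed transcendental $t$ are all fine. The gap is in the appeal to additive Hilbert 90. You invoke ``Artin's theorem: $F/F^G$ is Galois with group $G$'' for $F=k(v_1,\ldots,v_{i-1})$, but this requires $G$ to act \emph{faithfully} on $F$, and it does not. Already at $i=2$ one has $F=k(v_1)$ with $v_1\in V^G$ (the bottom of any such flag lies in the socle, hence in the fixed space), so $G$ acts trivially on $F$; more generally the kernel $N_i$ of $G\to GL(V_{i-1})$ is nontrivial for small $i$ and only shrinks to $1$ as $i$ grows. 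When $g$ acts trivially on $F$ the cocycle condition degenerates to $f_{gh}=f_g+f_h$, so $f$ restricted to the kernel is a group homomorphism into $(F,+)$, while every coboundary $g\mapsto g(h)-h$ vanishes on the kernel. Whenever the kernel strictly decreases at step $i$ this homomorphism is nonzero, the class of $f$ in $H^1(G,F)$ is nonzero, and no invariant of the form $u_i=v_i-h$ exists. Concretely, for $G=C_p=\langle\sigma\rangle$ acting regularly, $v_1=\sum_g x_g$ and one may take $v_2$ with $\sigma(v_2)=v_2+v_1$: no element $v_2-h$ with $h\in k(v_1)$ is invariant, and instead $k(v_1,v_2)^G=k\bigl(v_1,\,v_2^p-v_1^{p-1}v_2\bigr)$. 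So your induction breaks at its second step for every nontrivial $p$-group.

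The theorem is of course true and your flag strategy is the right skeleton, but the inductive step needs a stronger one-variable descent than Hilbert 90. The standard repair is Miyata's lemma (or a direct Artin--Schreier-type construction): for an action $g(x)=x+f_g$ on $F(x)$, let $A=\{f_g : g\in N\}$ where $N=\ker(G\to\mathrm{Aut}\,F)$; one checks $A$ is a finite additive subgroup of $F$ permuted by $G$, so the additive polynomial $P(X)=\prod_{a\in A}(X-a)$ has $G$-invariant coefficients, $y=P(x)$ satisfies $F(x)^N=F(y)$, and $G/N$ again acts affinely on $y$; iterating, $F(x)^G=F^G(u)$ for a single new generator $u$, which is exactly what your induction needs. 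Replacing the Hilbert 90 paragraph by this (or by a citation of Miyata's theorem on triangular actions) closes the gap; the rest of your argument then goes through unchanged.
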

Masuda \cite{Mas55, Mas68} gave an idea to use a technique of Galois descent 
to Noether's problem for cyclic groups $C_p$ of order $p$. 
Let $\zeta_p$ be a primitive $p$-th root of unity, 
$L=\bQ(\zeta_p)$ and $\pi={\rm Gal}(L/\bQ)$. 
Then, by Theorem \ref{thFis}, we have $\bQ(C_p)=\bQ(x_1,\ldots,x_p)^{C_p}
=(L(x_1,\ldots,x_p)^{C_p})^{\pi}
=L(y_0,\ldots,y_{p-1})^{\pi}=L(M)^{\pi}(y_0)$ 
where 
$y_0=\sum_{i=1}^p x_i$ is $\pi$-invariant, 
$M$ is free $\bZ[\pi]$-module and $\pi$ acts on $y_1,\ldots,y_{p-1}$ by 
$\sigma(y_i)=\prod_{j=1}^{p-1}y_j^{a_{ij}}$, 
$[a_{ij}]\in GL_n(\bZ)$ for any $\sigma\in\pi$. 
Thus the field $L(M)^\pi$ may be regarded as the function field 
of some algebraic torus of dimension $p-1$ (see e.g. \cite[Chapter 3]{Vos98}). 

\vspace*{3mm}
\begin{theorem}[{Masuda \cite{Mas55, Mas68}, see also \cite[Lemma 7.1]{Swa83}}]\label{thMasuda}~\\
{\rm (i)} $M$ is projective $\bZ[\pi]$-module of rank one;\\
{\rm (ii)} 
If $M$ is a permutation $\bZ[\pi]$-module, i.e. 
$M$ has a $\bZ$-basis which is permuted by $\pi$, 
then $L(M)^{\pi}$ is 
rational over $\bQ$. 
In particular, $\bQ(C_p)$ is rational over $\bQ$ 
for $p\leq 11$.\footnote{The author \cite[Chapter 5]{Hos05} 
generalized Theorem \ref{thMasuda} (ii) to Frobenius groups $F_{pl}$ of order $pl$ 
with $l\mid p-1$ ($p\leq 11$).}
\end{theorem}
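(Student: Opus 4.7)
The plan is to prove the two parts separately, using an explicit description of $M$ as a $\bZ[\pi]$-module and a normal-basis argument.

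\textbf{Part (i).} Since $\sigma\in C_p$ scales $y_i$ by a nontrivial power of $\zeta_p$, the $C_p$-invariant Laurent monomials in $y_1,\ldots,y_{p-1}$ form the sublattice
$$M \;=\; \Big\{(a_1,\ldots,a_{p-1})\in\bZ^{p-1} \;\Big|\; \sum_{i=1}^{p-1} i\,a_i \equiv 0 \pmod{p}\Big\}.$$
Because $\pi\cong(\bZ/p)^\times$ permutes the indices $\{1,\ldots,p-1\}$ regularly, the ambient lattice is $\bZ[\pi]$ as a $\bZ[\pi]$-module, and the residue map $(a_i)\mapsto\sum i\,a_i \bmod p$ yields a short exact sequence
$$0 \longrightarrow M \longrightarrow \bZ[\pi] \longrightarrow \bZ/p \longrightarrow 0$$
in which $\pi$ acts on $\bZ/p$ through its identification with $(\bZ/p)^\times$. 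Tensoring with $\bQ$ kills the quotient, so $M$ has $\bZ[\pi]$-rank one. Projectivity is then checked locally at every rational prime $\ell$: for $\ell\neq p$ the quotient vanishes after $\otimes\,\bZ_\ell$ and $M_\ell\cong\bZ_\ell[\pi]$ is free, while for $\ell=p$ Hensel's lemma supplies all $(p-1)$-th roots of unity in $\bZ_p$, so $\bZ_p[\pi]\cong\prod_{i=0}^{p-2}\bZ_p$ is a product of DVRs and any torsion-free finitely generated module over it is automatically projective.

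\textbf{Part (ii).} A rank-one permutation $\bZ[\pi]$-module $\bigoplus_i\bZ[\pi/H_i]$ becomes $\bQ[\pi]$ after tensoring with $\bQ$, forcing a single summand with $H_i$ trivial, so $M\cong\bZ[\pi]$. Write its $\bZ$-basis as $\{u_g\}_{g\in\pi}$, permuted regularly by $\pi$. Following Speiser's normal-basis trick, pick $\theta\in L$ whose $\pi$-orbit is a $\bQ$-basis of $L$ and set
$$w_g \;=\; \sum_{h\in\pi} h(\theta)\,u_{hg} \qquad (g\in\pi).$$
The substitution $k=\tau h$ shows $\tau(w_g)=w_g$ for every $\tau\in\pi$, and the $L$-linear change of variables $(u_g)\mapsto(w_g)$ is invertible by nonsingularity of the normal-basis matrix, so $L(u_g)=L(w_g)$. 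The $w_g$ are algebraically independent over $\bQ$ with $\bQ(w_g)\cap L=\bQ$, hence
$$[L(u_g):\bQ(w_g)]=[L:\bQ]=|\pi|=[L(u_g):L(u_g)^\pi]$$
forces $L(M)^\pi=\bQ(w_g\mid g\in\pi)$, a purely transcendental extension of $\bQ$. Together with $\bQ(C_p)=L(M)^\pi(y_0)$ this yields rationality.

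\textbf{The case $p\le 11$ and main obstacle.} For $|\pi|=p-1\in\{1,2,4,6,10\}$ the locally free class group of $\bZ[\pi]$ is trivial by classical computations (ultimately reducing to class-number-one facts for the relevant cyclotomic orders), so the rank-one projective $M$ from (i) is automatically free and hence a permutation module, and (ii) applies. The delicate point is projectivity at $\ell=p$, where one genuinely needs $\bZ_p$ to contain all $(p-1)$-th roots of unity to split $\bZ_p[\pi]$ as a product of DVRs; beyond $p\le 11$ the permutation hypothesis in (ii) fails in general, and the non-triviality of the class of $M$ in $\mathrm{Cl}(\bZ[\pi])$, together with finer flasque-resolution invariants, is precisely the obstruction to rationality studied in the remainder of the paper.
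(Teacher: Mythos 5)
The paper states this theorem as a cited result (Masuda; Swan's Lemma 7.1) and gives no proof of its own, so there is nothing internal to compare against; your argument is a correct reconstruction of exactly the standard Masuda--Swan proof. Part (i) — realizing $M$ as the kernel of the equivariant surjection $\bZ[\pi]\to\bZ/p$, $e_i\mapsto i$, and checking local freeness at $\ell\neq p$ by flatness and at $\ell=p$ by splitting $\bZ_p[\pi]\cong\prod\bZ_p$ via Teichm\"uller lifts — and part (ii) — reducing a rank-one permutation module to $\bZ[\pi]$ and applying Speiser's normal-basis change of variables $w_g=\sum_h h(\theta)u_{hg}$ with the degree count $[L(u_g):\bQ(w_g)]=|\pi|=[L(M):L(M)^\pi]$ — are both sound. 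The only places you lean on unproved classical input are in the ``$p\le 11$'' clause: the triviality of the locally free class group of $\bZ[C_{p-1}]$ for $p-1\in\{1,2,4,6,10\}$ is asserted rather than derived (for $p-1=10$ this genuinely requires a fiber-product/Mayer--Vietoris computation, not just class number one of the cyclotomic factors), and passing from ``trivial class group'' to ``free'' tacitly uses that stably free $\bZ[C_n]$-modules of positive rank are free (Bass/Jacobinski cancellation). Both facts are standard and are also taken as known in the sources the paper cites, so I would not call this a gap, but a sentence acknowledging each would make the argument self-contained.
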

Swan \cite{Swa69} gave the first negative solution to Noether's problem 
by investigating a partial converse to Masuda's result. 
\vspace*{3mm}
\begin{theorem}[{Swan \cite[Theorem 1]{Swa69}, Voskresenskii \cite[Theorem 2]{Vos70}}]\label{thSwan}
~\\
{\rm (i)} If $\bQ(C_p)$ is rational over $\bQ$, 
then there exists $\alpha\in \bZ[\zeta_{p-1}]$ such that 
$N_{\bQ(\zeta_{p-1})/\bQ}(\alpha)=\pm p$;\\
{\rm (ii)} {\rm (Swan)} $\bQ(C_{47})$, $\bQ(C_{113})$ and 
$\bQ(C_{233})$ are not rational over $\bQ$;\\
{\rm (iii)} {\rm (Voskresenskii)} $\bQ(C_{47})$, $\bQ(C_{167})$, 
$\bQ(C_{359})$, $\bQ(C_{383})$, $\bQ(C_{479})$, $\bQ(C_{503})$ and 
$\bQ(C_{719})$ are not rational over $\bQ$.
\end{theorem}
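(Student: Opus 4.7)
The plan is to use Masuda's decomposition $\bQ(C_p) = L(M)^\pi(y_0)$ with $M$ a projective $\bZ[\pi]$-module of rank one (Theorem \ref{thMasuda}(i)), and to translate the rationality question into a question about the $\bZ[\pi]$-module class of $M$. Since $L(M)^\pi$ is the function field of an algebraic torus $T$ of dimension $p-1$ split by $L/\bQ$ with character lattice $M$, I would invoke Voskresenskii's theory of birational invariants of tori: (stable) rationality of $T$ over $\bQ$ forces $M$ to be stably permutation, i.e.\ $M \oplus P \cong Q$ for some permutation $\bZ[\pi]$-modules $P$ and $Q$.

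For part (i), with $\pi$ cyclic of order $p-1$ the rational group ring splits as $\bQ[\pi] = \prod_{d \mid p-1} \bQ(\zeta_d)$, and a rank-one projective $\bZ[\pi]$-module corresponds, via reduction to the maximal order, to a compatible system of ideal classes---one in each $\bZ[\zeta_d]$. The Masuda module $M$, arising from diagonalizing the regular representation of $C_p$ on the eigenbasis $y_1, \ldots, y_{p-1}$, projects on the faithful component $\bQ(\zeta_{p-1})$ to the class of a prime above $p$; here one uses that $p \equiv 1 \pmod{p-1}$, so $p$ splits completely in $\bZ[\zeta_{p-1}]$ into $\varphi(p-1)$ primes each of norm $p$. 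Stable permutation equivalence then forces this prime to be principal, which is exactly the existence of $\alpha \in \bZ[\zeta_{p-1}]$ with $N_{\bQ(\zeta_{p-1})/\bQ}(\alpha) = \pm p$.

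Parts (ii) and (iii) now reduce to verifying that the listed primes $p$ are not of the form $|N_{\bQ(\zeta_{p-1})/\bQ}(\alpha)|$. For each, I would identify $\bQ(\zeta_{p-1})$ (using $\bQ(\zeta_{2m}) = \bQ(\zeta_m)$ for odd $m$) and then examine the class group or enumerate small-norm integers directly. For $p = 47$: the field is $\bQ(\zeta_{23})$, of degree $22$ and class number $3$; one checks by explicit class-group computation or a Minkowski-bound argument that no prime above $47$ is principal. The remaining primes $113, 233$ in (ii) and the Voskresenskii primes in (iii) are treated analogously, using tabulated class-number data for cyclotomic fields of the relevant conductor.

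The principal conceptual obstacle is pinning down precisely which ideal class in $\bZ[\zeta_{p-1}]$ the Masuda module represents, so that ``stably permutation as a $\bZ[\pi]$-module'' translates correctly into ``principal prime above $p$.'' Computationally, the hard part scales with $p$: for $p = 719$ one is working in $\bQ(\zeta_{359})$ of degree $358$, so proving non-principality demands either reliable (possibly GRH-dependent) class-group data or a delicate lattice-theoretic argument bounding the norms of integers of bounded height.
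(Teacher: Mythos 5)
You should first note that the paper never proves Theorem \ref{thSwan}: it is quoted from Swan \cite{Swa69} and Voskresenskii \cite{Vos70}, and the only ingredients the paper itself supplies are the Masuda descent $\bQ(C_p)=L(M)^{\pi}(y_0)$ of Section 2 and the subfield reduction stated at the top of Section 3. Your outline of part (i) follows the standard Swan--Voskresenskii--Endo--Miyata route and is sound in substance: $M$ is projective of rank one, its image in the faithful component $\bZ[\zeta_{p-1}]$ of $\bQ[\pi]$ is the class of the degree-one prime $(\zeta_{p-1}-t,p)$ above $p$ (with $t$ a primitive root mod $p$; $p$ splits completely since $p\equiv 1\pmod{p-1}$), and triviality of that class for a completely split prime is exactly solvability of $N_{\bQ(\zeta_{p-1})/\bQ}(\alpha)=\pm p$. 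But be aware that ``stable rationality of $T$ forces $M$ to be stably permutation'' is not a formal consequence of the general theory (which only gives triviality of the flabby class); it holds here because $M$ is invertible, and that equivalence, together with the identification of the resulting class with the prime above $p$, is precisely the content of Swan's Theorem 1 --- the step you yourself flag as ``the principal conceptual obstacle'' and do not carry out.

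The genuine gap is in parts (ii) and (iii). Your plan --- compute the class group of $\bQ(\zeta_{p-1})$ and test the prime above $p$ for principality --- is unworkable for the Voskresenskii primes: for $p=719$ you would be computing in $\bQ(\zeta_{359})$ of degree $358$, whose class group is not known unconditionally and for which no usable tables exist. The missing idea is transitivity of the norm: if $N_{\bQ(\zeta_{p-1})/\bQ}(\alpha)=\pm p$, then $\pm p=N_{F/\bQ}(\beta)$ with $\beta=N_{\bQ(\zeta_{p-1})/F}(\alpha)$ an algebraic integer of $F$, for \emph{every} subfield $F$; so it suffices to rule out $\pm p$ as a norm from one small $F$. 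Every prime in (iii) has the form $p=2q+1$ with $q$ prime, $q\equiv 3\pmod 4$, so $\bQ(\sqrt{-q})\subset\bQ(\zeta_q)=\bQ(\zeta_{p-1})$ and the question becomes non-representability of $4p$ by the positive definite form $x^2+qy^2$, a finite elementary check ($y^2\in\{0,1,4\}$ gives $4p$, $4p-q=7q+4$, $4(q+1)$, none of which are squares); the primes $113$ and $233$ of (ii) are handled similarly inside $\bQ(\zeta_8)$. This is exactly Proposition \ref{propEM} of Endo and Miyata quoted in the paper, and it is also the engine of the whole paper: the function {\tt NP} searches low-degree subfields of $\bQ(\zeta_{p_j-1})$ rather than the full cyclotomic field. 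Without this reduction your argument for (iii) does not close.
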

\begin{theorem}[{Voskresenskii \cite[Theorem 1]{Vos71}}]\label{thVos}
$\bQ(C_p)$ is rational over $\bQ$ if and only if 
there exists $\alpha\in \bZ[\zeta_{p-1}]$ such that 
$N_{\bQ(\zeta_{p-1})/\bQ}(\alpha)=\pm p$.
\end{theorem}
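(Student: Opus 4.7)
The forward implication is Theorem \ref{thSwan}(i), so the task is the converse: given $\alpha\in\bZ[\zeta_{p-1}]$ with $N_{\bQ(\zeta_{p-1})/\bQ}(\alpha)=\pm p$, I want to conclude that $\bQ(C_p)$ is rational over $\bQ$. I would argue through Masuda's decomposition $\bQ(C_p)=L(M)^\pi(y_0)$, with $L=\bQ(\zeta_p)$, $\pi=\mathrm{Gal}(L/\bQ)\cong C_{p-1}$, and $M$ a projective $\bZ[\pi]$-module of rank one (Theorem \ref{thMasuda}(i)). By Theorem \ref{thMasuda}(ii) it is enough to show that the hypothesis on $\alpha$ forces $M\cong\bZ[\pi]$, i.e.\ that $M$ is a permutation $\bZ[\pi]$-module.

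I would then invoke the classification of rank-one projective modules over the integral group ring of a cyclic group. The maximal order of $\bQ[\pi]$ decomposes as $\prod_{d\mid p-1}\bZ[\zeta_d]$, and the class $[M]$ in the locally free class group $\mathrm{Cl}(\bZ[\pi])$ is detected through its images in each $\mathrm{Cl}(\bZ[\zeta_d])$. A direct analysis of Masuda's multiplicative action on the resolvents $y_1,\dots,y_{p-1}$ should show that $[M]$ is nontrivial only in the top component (indexed by the faithful character $d=p-1$) and that, there, it equals the class of a prime $\mathfrak{p}$ of $\bZ[\zeta_{p-1}]$ lying over $p$. Since $p\equiv 1\pmod{p-1}$, the prime $p$ splits completely in $\bZ[\zeta_{p-1}]$, so every prime above $p$ has norm $p$; the hypothesis $N(\alpha)=\pm p$ therefore realizes such a prime as $(\alpha)$, forcing $[M]=0$ in $\mathrm{Cl}(\bZ[\pi])$. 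A Mayer--Vietoris argument attached to the Milnor square for the pullback $\bZ[\pi]\hookrightarrow\prod_{d\mid p-1}\bZ[\zeta_d]$ then upgrades this vanishing of the class to an honest isomorphism $M\cong\bZ[\pi]$, whereupon Theorem \ref{thMasuda}(ii) concludes rationality.

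The main obstacle is the precise step in the middle: pinning down which element of $\mathrm{Cl}(\bZ[\pi])$ the Masuda module $M$ represents, and verifying that its only nontrivial component is the class of a prime above $p$ in $\bZ[\zeta_{p-1}]$. This requires an explicit description of $M$ as an invertible $\bZ[\pi]$-ideal (e.g.\ via the Lagrange resolvents and the character-by-character decomposition of $L^\times\otimes_\bZ\bZ[\pi]$) together with some bookkeeping on the Milnor patching diagram for $\bZ[C_{p-1}]$. Once this identification is secured, the chain trivial class $\Rightarrow$ free $\Rightarrow$ permutation $\Rightarrow$ rationality via Theorem \ref{thMasuda}(ii) is a formal consequence.
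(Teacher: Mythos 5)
The paper offers no proof of this statement: it is quoted from Voskresenskii [Vos71, Theorem 1], with the ``only if'' direction also appearing as Theorem \ref{thSwan}(i). So your proposal can only be measured against the classical argument in the literature (Voskresenskii, Endo--Miyata [EM73, Theorem 3.1], Lenstra [Len74]). Your skeleton does match that argument: Masuda's descent $\bQ(C_p)=L(M)^{\pi}(y_0)$, reduction to freeness of the rank-one projective $\bZ[\pi]$-module $M$, identification of $M$ with the ideal $(\sigma-t,p)$ for $t$ a primitive root mod $p$, and the observation that this ideal becomes the unit ideal in $\bZ[\zeta_d]$ for $d<p-1$ (because $p\nmid\Phi_d(t)$ when $\mathrm{ord}_p(t)=p-1$) and a degree-one prime above $p$ in $\bZ[\zeta_{p-1}]$, which your $\alpha$ with $N(\alpha)=\pm p$ generates up to Galois conjugacy.

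The genuine gap is at the decisive step. You infer $[M]=0$ in $\mathrm{Cl}(\bZ[\pi])$ from the triviality of its images in each $\mathrm{Cl}(\bZ[\zeta_d])$, asserting that the class ``is detected'' by these components. That map is not injective in general: its kernel is the kernel group $D(\bZ[\pi])$ coming from the Mayer--Vietoris sequence of the pullback square, and it is nontrivial for many cyclic groups of composite order --- and $\pi\cong C_{p-1}$ has composite order here. So ``trivial in every cyclotomic component'' does not by itself force the class to vanish. The actual content of the converse, as carried out by Endo--Miyata and Lenstra, is an explicit patching argument: one adjusts the generator $\alpha$ of $(\zeta_{p-1}-t,p)$ by a unit so that it glues with the generator $1$ of the unit ideal on the remaining components across the conductor, producing an honest generator of $(\sigma-t,p)$ in $\bZ[\pi]$. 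You have placed the Mayer--Vietoris work at the end, where it is not needed (a rank-one stably free module over a commutative ring is free by taking exterior powers), instead of in the middle, where it is the whole theorem. The identification of Masuda's $M$ with the ideal $(\sigma-t,p)$ is likewise asserted rather than proved; you correctly flag it as an obstacle, but together with the detection claim it means the proposal, as written, does not yet constitute a proof.
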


Hence if the cyclotomic field $\bQ(\zeta_{p-1})$ has class number one, 
then $\bQ(C_p)$ is rational over $\bQ$. 
However, it is known that such primes are exactly $p\leq 43$ and $p=61,67,71$ 
(see Masley and Montgomery \cite[Main theorem]{MM76} or 
Washington's book \cite[Chapter 11]{Was97}). 

Endo and Miyata \cite{EM73} refined Masuda-Swan's method 
and gave some further consequences on Noether's problem when $G$ is abelian 
(see also \cite{Vos73}). 
\begin{theorem}[Endo and Miyata {\cite[Theorem 2.3]{EM73}}]\label{thEM23}
Let $G_1$ and $G_2$ be finite groups and $k$ be a field with {\rm char} $k=0$. 
If $k(G_1)$ and $k(G_2)$ are rational $($resp. stably rational$)$ 
over $k$, then 
$k(G_1\times G_2)$ is rational $($resp. stably rational$)$ over 
$k$.\footnote{Kang and Plans \cite[Theorem 1.3]{KP09} showed that 
Theorem \ref{thEM23} is also valid for any field $k$.}
\end{theorem}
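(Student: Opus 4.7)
The plan is to prove the theorem by applying Speiser's lemma (the ``no-name lemma''): for any finite group $G$ acting faithfully and linearly on a finite-dimensional $k$-vector space $V$ and any finite-dimensional $G$-representation $W$, the invariant field $k(V \oplus W)^G$ is purely transcendental over $k(V)^G$ of transcendence degree $\dim W$. This will let me compare the invariants of $(G_1 \times G_2)$-representations that differ only by auxiliary summands, and thereby transfer rationality from a decoupled direct-sum Noether problem to the genuinely coupled tensor-product one.

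The first step is to compute the invariants of the direct sum. Let $V_i = k[G_i]$ denote the regular representation of $G_i$, inflated to a $(G_1 \times G_2)$-representation on which the other factor acts trivially; then on $V_1 \oplus V_2$ the two actions decouple and
\[
k(V_1 \oplus V_2)^{G_1 \times G_2} = k(V_1)^{G_1} \otimes_k k(V_2)^{G_2} = k(G_1) \otimes_k k(G_2),
\]
which is rational (resp. stably rational) over $k$ exactly when both $k(G_i)$ are. Note $V_1 \otimes V_2$ is the regular representation of $G_1 \times G_2$, so $k(V_1 \otimes V_2)^{G_1 \times G_2} = k(G_1 \times G_2)$ is precisely what we want.

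The second step is a representation-theoretic identity. In characteristic zero Maschke's theorem gives $V_i = V_{\mathrm{triv}} \oplus V_i'$, where $V_i'$ is the augmentation ideal, and a direct character calculation (both sides have character $(g_1, g_2) \mapsto nm\,\delta_{(e,e)} + 1$) shows
\[
(V_1 \otimes V_2) \oplus V_{\mathrm{triv}} \;\cong\; (V_1 \oplus V_2) \oplus (V_1' \otimes V_2')
\]
as $(G_1 \times G_2)$-representations. Applying Speiser's lemma to both sides, using $V_1 \otimes V_2$ and $V_1 \oplus V_2$ as the respective faithful summands, and equating the resulting invariant fields gives
\[
k(G_1 \times G_2)(t) \;=\; \bigl(k(G_1) \otimes_k k(G_2)\bigr)\bigl(s_1, \ldots, s_{(n-1)(m-1)}\bigr),
\]
so the rationality of the right-hand side (supplied by the hypothesis on $k(G_i)$) forces $k(G_1 \times G_2)$ to be stably rational over $k$.

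The main obstacle is upgrading stable rationality to rationality in the first assertion, since the auxiliary transcendental $t$ cannot in general be cancelled (Zariski cancellation for fields can fail). To handle this I would keep track of the no-name isomorphism so that $t$ is identified with a concrete generator on the right-hand side: the $V_{\mathrm{triv}}$ summand is carried by the explicit linear form $\sum_{(g_1, g_2)} x_{(g_1, g_2)}$, and this augmentation also appears as a natural generator on the direct-sum side, allowing $t$ to be matched against one of the $s_j$. Alternatively one can argue inductively by taking $G_1$-invariants first (obtaining a field that is rational over $k(G_1)$ by Speiser's lemma) and then exhibiting a transcendence basis on which $G_2$ acts as a permutation representation containing the regular representation of $G_2$ as a summand, reducing the second invariant computation to Noether's problem for $G_2$; securing this permutation structure, rather than settling for an opaque nonlinear $G_2$-action on the generators, is the delicate point.
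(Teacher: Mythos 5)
The paper itself states this result only as a quotation from Endo--Miyata (with Kang--Plans for general $k$) and gives no proof, so there is nothing internal to compare against; judging your argument on its own terms, it is correct for the \emph{stably rational} assertion but has a genuine gap in the \emph{rational} one. Your setup is the standard one: the character identity $(V_1\otimes V_2)\oplus V_{\mathrm{triv}}\cong(V_1\oplus V_2)\oplus(V_1'\otimes V_2')$ is right, and the no-name lemma applied to both sides does yield $k(G_1\times G_2)(t)\cong \bigl(k(G_1)\cdot k(G_2)\bigr)(s_1,\ldots,s_{(n-1)(m-1)})$. But from there you are facing the cancellation problem head-on, and neither proposed repair closes it. Your first fix (tracking the trivial summand so that ``$t$ is matched against one of the $s_j$'') does not help: even if $t$ literally coincides with a generator of the right-hand side, an equality $L(t)=M(t)$ with $t$ transcendental over both $L$ and $M$ does not force $L\cong M$ --- this is exactly the (open, and in related settings false) cancellation question, and moreover the representation isomorphism does not carry the subfield $k(V_1\otimes V_2)^G$ onto anything compatible with the decomposition on the other side. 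Your second fix is a sketch of the right kind of argument but, as you concede, the crucial point (a $G_2$-stable transcendence basis with the required permutation structure) is precisely what is not supplied.

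The clean way out --- and this is essentially how Endo--Miyata and Kang--Plans argue --- is to compare both fields over the \emph{smaller} common faithful core $V_1'\oplus V_2'$ rather than over $V_1\oplus V_2$. Assuming $n=|G_1|\ge 2$ and $m=|G_2|\ge 2$ (otherwise the theorem is vacuous), each augmentation ideal $V_i'$ is a faithful $G_i$-representation, so $V_1'\oplus V_2'$ is faithful for $G_1\times G_2$. Writing $E=k(V_1'\oplus V_2')^{G_1\times G_2}$, the no-name lemma applied to $V_1\otimes V_2=(V_1'\oplus V_2')\oplus\bigl(V_{\mathrm{triv}}\oplus(V_1'\otimes V_2')\bigr)$ gives that $k(G_1\times G_2)$ is purely transcendental over $E$ of degree $1+(n-1)(m-1)\ge 2$, while $V_1\oplus V_2=(V_1'\oplus V_2')\oplus V_{\mathrm{triv}}^{\oplus 2}$ gives that the free composite $k(G_1)\cdot k(G_2)$ is purely transcendental over $E$ of degree exactly $2$. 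Hence
\[
k(G_1\times G_2)\;\cong\;E(t_1,t_2)(t_3,\ldots,t_{1+(n-1)(m-1)})\;\cong\;\bigl(k(G_1)\cdot k(G_2)\bigr)(t_3,\ldots,t_{1+(n-1)(m-1)}),
\]
a purely transcendental extension of the free composite; no cancellation is ever needed, and rationality (resp.\ stable rationality) of $k(G_1)$ and $k(G_2)$ passes directly to $k(G_1\times G_2)$. The only change from your argument is the choice of the faithful subrepresentation over which the no-name lemma is invoked, but it is exactly the change that turns the one surplus variable from an obstruction into a harmless extra generator.
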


The converse of Theorem \ref{thEM23} does not hold for general 
$k$, see e.g. 
Theorem \ref{thEM73m} below. 

\begin{theorem}[Endo and Miyata {\cite[Theorem 3.1]{EM73}}]
Let $p$ be an odd prime and $l$ be a positive integer. 
Let $k$ be a field with {\rm char} $k=0$ and 
$[k(\zeta_{p^l}):k]=p^{m_0}d_0$ with $0\leq m_0\leq l-1$ and 
$d_0\mid p-1$. 
Then the following conditions are equivalent:\\
{\rm (i)} For any faithful $k[C_{p^l}]$-module $V$, 
$k(V)^{C_{p^l}}$ is rational over $k$;\\
{\rm (ii)} $k(C_{p^l})$ is rational over $k$;\\
{\rm (iii)} There exists $\alpha\in \bZ[\zeta_{p^{m_0}d_0}]$ 
such that 
\[
N_{\bQ(\zeta_{p^{m_0}d_0})/\bQ}(\alpha)=
\begin{cases}
\pm p& m_0>0\\
\pm p^l & m_0=0.
\end{cases}
\]
Further suppose that $m_0>0$. 
Then the above conditions are equivalent to each of 
the following conditions:\\
{\rm (i${}^\prime$)} For any $k[C_{p^l}]$-module $V$, 
$k(V)^{C_{p^l}}$ is rational over $k$;\\
{\rm (ii${}^\prime$)} For any $1\leq l^\prime\leq l$, 
$k(C_{p^{l^\prime}})$ is rational over $k$.
\end{theorem}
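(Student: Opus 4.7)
The plan is to adapt Masuda's Galois-descent argument (Theorem \ref{thMasuda}) from $C_p$ to the prime-power case $C_{p^l}$. First pass to $L=k(\zeta_{p^l})$ so that by Fischer's theorem $L(C_{p^l})$ is rational over $L$, written diagonally as $L(y_0,y_1,\ldots,y_{p^l-1})$ with $y_0$ being $\pi$-invariant and $y_1,\ldots,y_{p^l-1}$ multiplicative eigenvectors for $C_{p^l}$. Setting $\pi=\mathrm{Gal}(L/k)$, which is cyclic of order $p^{m_0}d_0$ by hypothesis, Galois descent gives $k(C_{p^l})=L(M)^{\pi}(y_0)$ for the character $\bZ[\pi]$-lattice $M$ of $C_{p^l}$. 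Rationality of $k(C_{p^l})$ over $k$ thus becomes rationality of the function field of a specific algebraic $\pi$-torus.

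Next I would decompose $M$ according to the $\pi$-orbits on the characters of $C_{p^l}$. Characters of exact order $p^j$ for $1\le j<l$ give $\bZ[\pi]$-summands induced from proper subgroups of $\pi$ (kernels of the action on $\zeta_{p^j}$), which are permutation modules and can be stripped off without affecting rationality. The top orbit (characters of exact order $p^l$) contributes a single rank-one projective $\bZ[\pi]$-module $P$, exactly as in Theorem \ref{thMasuda}(i). By the rationality criterion for algebraic tori (Voskresenskii, Endo--Miyata), $L(M)^\pi$ is (stably) rational over $k$ iff $M$, equivalently $P$, is stably permutation.

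The heart of the proof is the translation (ii)$\Leftrightarrow$(iii): the class of $P$ in $\widetilde{K}_0(\bZ[\pi])$ is computed via a Mayer--Vietoris / pullback square for the cyclic group ring $\bZ[\pi]$, which identifies it with an ideal class of $\bZ[\zeta_{p^{m_0}d_0}]$ lying over the rational prime $p$. This class is principal with a generator of norm $\pm p$ (in case $m_0>0$) or $\pm p^l$ (in case $m_0=0$) exactly when the prescribed $\alpha$ exists; the case distinction corresponds to the $p$-Sylow structure of $\pi$ and to the amount of the wild inertia of $\bQ(\zeta_{p^l})/\bQ$ that is absorbed into $k$. The remaining implications are routine: (i)$\Rightarrow$(ii) is immediate since the regular module is faithful, and (ii)$\Rightarrow$(i) follows from the no-name lemma applied to a faithful $k[C_{p^l}]$-module $V$. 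When $m_0>0$, the equivalences with (i${}^\prime$) and (ii${}^\prime$) come by induction on $l$: a non-faithful module factors through a quotient $C_{p^{l^\prime}}$ with $l^\prime<l$, and the norm condition $N(\alpha)=\pm p$ is independent of $l$ once $m_0>0$. The main obstacle will be the Mayer--Vietoris computation and controlling the $\pm 1$ ambiguity coming from the units of $\bZ[\zeta_{p^{m_0}d_0}]$; this is the technical core of Endo--Miyata's analysis of projective modules over cyclic group rings.
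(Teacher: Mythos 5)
This paper does not actually prove the statement: it is quoted as a known result of Endo and Miyata (Theorem 3.1 of [EM73]) in the survey section, with no argument supplied, so there is no in-paper proof to compare yours against. Judged on its own terms, your outline follows the genuine Endo--Miyata route (Galois descent to $L=k(\zeta_{p^l})$, decomposition of the character lattice $M$, reduction to a class-group computation over the cyclic group ring $\bZ[\pi]$), but it contains a concrete error precisely at the step that carries the content of the theorem.

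You claim that the summands of $M$ coming from characters of exact order $p^j$ with $j<l$ ``are permutation modules and can be stripped off without affecting rationality,'' leaving only the top orbit. This cannot be right, and the statement itself shows why: in the case $m_0=0$ the criterion is the existence of $\alpha$ with $N_{\bQ(\zeta_{d_0})/\bQ}(\alpha)=\pm p^l$, and the exponent $l$ records a contribution from \emph{every} stratum $j=1,\dots,l$ --- each one is a rank-one projective over $\bZ[\pi]$ whose class is that of a prime ideal above $p$, and the obstructions multiply to an ideal of norm $p^l$. If the lower strata were permutation, the condition would read $\pm p$ uniformly in $m_0$, and the conditions (ii${}^\prime$) for $l^\prime<l$ would follow automatically from (ii), whereas the theorem asserts that equivalence only under the extra hypothesis $m_0>0$. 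So the dichotomy on $m_0$, which is the whole point, is exactly what your decomposition erases. Two further gaps: the assertion that $L(M)^\pi$ is stably rational over $k$ iff $M$ is stably permutation is not the general torus criterion (that is governed by the flabby class of $M$, and reduces to a statement about $M$ itself only because these particular lattices are projective over a cyclic group ring --- this must be stated and used); and your argument as written would yield stable rationality, while the theorem asserts rationality, so you still need either an honest permutation basis when the relevant projective class is trivial (Masuda's construction) or the Endo--Miyata descent from stable rationality to rationality for these fields. The no-name-lemma step for (i)$\Leftrightarrow$(ii) has the same stable-versus-plain issue.
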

\begin{theorem}[Endo and Miyata {\cite[Proposition 3.2]{EM73}}]
Let $p$ be an odd prime and $k$ be a field with {\rm char} $k=0$. 
If $k$ contains $\zeta_p+\zeta_p^{-1}$, then $k(C_{p^l})$ is 
rational over $k$ for any $l$. 
In particular, $\bQ(C_{3^l})$ is rational over $\bQ$ for any $l$. 
\end{theorem}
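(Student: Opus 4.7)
The plan is to apply the preceding theorem of Endo-Miyata \cite[Theorem 3.1]{EM73}, which reduces rationality of $k(C_{p^l})$ to producing an element $\alpha \in \bZ[\zeta_{p^{m_0}d_0}]$ with $N(\alpha) = \pm p$ (when $m_0 > 0$) or $N(\alpha) = \pm p^l$ (when $m_0 = 0$), where $[k(\zeta_{p^l}):k] = p^{m_0}d_0$ with $d_0 \mid p-1$ and $0 \leq m_0 \leq l-1$. Our task is thus to verify this norm condition using only the real-subfield hypothesis.

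First I would use the hypothesis to constrain $d_0$. Because $\zeta_p + \zeta_p^{-1}$ generates the maximal real subfield $\bQ(\zeta_p)^+$ of index $2$ in $\bQ(\zeta_p)$, the extension $k(\zeta_p)/k$ has degree at most $2$; combined with $[\bQ(\zeta_{p^l}):\bQ(\zeta_p)] = p^{l-1}$, this forces the prime-to-$p$ part of $[k(\zeta_{p^l}):k]$ to divide $\gcd(2,p-1) = 2$, so $d_0 \in \{1,2\}$. Next I would exhibit $\alpha$ case by case. When $m_0 = 0$ we have $\bZ[\zeta_{p^{m_0}d_0}] = \bZ$ (using $\zeta_2 = -1$ when $d_0 = 2$), and $\alpha = p^l$ trivially has norm $p^l$. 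When $m_0 \geq 1$, since $p$ is odd and $\bQ(\zeta_{2n}) = \bQ(\zeta_n)$ for odd $n$, we have $\bZ[\zeta_{p^{m_0}d_0}] = \bZ[\zeta_{p^{m_0}}]$, and $\alpha = 1 - \zeta_{p^{m_0}}$ satisfies $N(\alpha) = \Phi_{p^{m_0}}(1) = p$ by direct evaluation of the cyclotomic polynomial at $1$. In every case the norm condition is met, so the cited theorem yields rationality of $k(C_{p^l})$ over $k$.

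The specialization to $\bQ(C_{3^l})$ is then immediate, as $\zeta_3 + \zeta_3^{-1} = -1 \in \bQ$. This proof is essentially mechanical once the Endo-Miyata structure theorem is in hand; the only explicit computation is the elementary identity $\Phi_{p^m}(1) = p$, and the main conceptual point is simply to recognize that the real-subfield hypothesis pins down $d_0 \leq 2$, after which the familiar uniformizer $1 - \zeta_{p^{m_0}}$ closes the argument. Accordingly, I do not anticipate any genuine obstacle, provided one is careful to separate the degenerate $m_0 = 0$ case (where the norm target is $\pm p^l$ rather than $\pm p$) from the generic $m_0 \geq 1$ case.
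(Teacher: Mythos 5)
Your argument is correct, and it coincides with the standard (and, as far as I can tell, the original) deduction in Endo--Miyata: the hypothesis $\zeta_p+\zeta_p^{-1}\in k$ forces $[k(\zeta_p):k]\le 2$, hence $d_0\in\{1,2\}$, so that $\bZ[\zeta_{p^{m_0}d_0}]=\bZ[\zeta_{p^{m_0}}]$ and the element $1-\zeta_{p^{m_0}}$ (resp.\ $p^l$ when $m_0=0$) verifies the norm condition of \cite[Theorem 3.1]{EM73}. Note that the paper itself offers no proof of this statement --- it is quoted from \cite[Proposition 3.2]{EM73} in the survey section --- so there is no distinct in-paper argument to compare against; your write-up fills that gap correctly, including the careful separation of the $m_0=0$ case.
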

\vspace*{0mm}
\begin{theorem}[Endo and Miyata {\cite[Proposition 3.4, Corollary 3.10]{EM73}}]
~\\
{\rm (i)} For primes $p\leq 43$ and $p=61,67,71$, 
$\bQ(C_p)$ is rational over $\bQ$;\\
{\rm (ii)} For $p=5,7$, $\bQ(C_{p^2})$ is rational over $\bQ$;\\
{\rm (iii)} For $l\geq 3$, $\bQ(C_{2^l})$ is not stably rational over $\bQ$. 
\end{theorem}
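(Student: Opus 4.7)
The plan splits into three quite different sub-arguments, one per part.

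\textbf{Part (i).} I would invoke Voskresenskii's criterion (Theorem~\ref{thVos}) and reduce rationality of $\bQ(C_p)$ to the existence of $\alpha \in \bZ[\zeta_{p-1}]$ with $N_{\bQ(\zeta_{p-1})/\bQ}(\alpha) = \pm p$. The key arithmetic observation is that $p \equiv 1 \pmod{p-1}$, so $p$ splits completely in $\bQ(\zeta_{p-1})$; every prime $\mathfrak{p}\mid p$ in $\bZ[\zeta_{p-1}]$ has residue degree $1$ and absolute norm $p$. By the Masley--Montgomery classification (already cited in the excerpt after Theorem~\ref{thVos}), $\bQ(\zeta_{p-1})$ has class number one precisely for the listed primes $p\leq 43$ and $p\in\{61,67,71\}$, in which case any such $\mathfrak{p}$ is principal and its generator is the required $\alpha$.

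\textbf{Part (ii).} I would apply the Endo--Miyata criterion stated just above this theorem (for $C_{p^l}$ over $k$ with $p$ odd) to the case $k=\bQ$, $l=2$. Since $[\bQ(\zeta_{p^2}):\bQ]=p(p-1)$ one has $m_0=1$ and $d_0=p-1$, and the criterion reduces to finding $\alpha \in \bZ[\zeta_{p(p-1)}]$ of norm $\pm p$. For $p=5$: in $\bZ[\zeta_{20}]$ the prime $5$ factors as $\mathfrak{p}_1^4\mathfrak{p}_2^4$ with $N(\mathfrak{p}_i)=5$ (since $5$ is totally ramified in $\bQ(\zeta_5)$ and splits as $(2+i)(2-i)$ in $\bZ[i]$), and $\bQ(\zeta_{20})$ has class number one, so a generator of $\mathfrak{p}_1$ works. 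The case $p=7$ is analogous, using that $7$ splits in $\bQ(\sqrt{-3})$ (because $7\equiv 1\pmod 3$) and that $h(\bQ(\zeta_{42}))=1$.

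\textbf{Part (iii).} This is the only non-rationality assertion, and the main obstacle. The odd-prime criterion above is not directly available, and moreover $1-\zeta_{2^l}$ has norm $\pm 2$, so any naive norm test is trivially satisfied and cannot yield an obstruction. My plan is to return to the Masuda Galois-descent set-up of Theorem~\ref{thMasuda}, where $\bQ(C_{2^l})=L(M)^\pi$ with $\pi=\mathrm{Gal}(\bQ(\zeta_{2^l})/\bQ)\cong C_2\times C_{2^{l-2}}$ (non-cyclic for $l\geq 3$), and to analyse the flabby class $[M]^{fl}$ of the associated $\bZ[\pi]$-lattice $M$: stable rationality of $L(M)^\pi$ is equivalent to $[M]^{fl}$ being of permutation type. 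I would then exhibit a non-vanishing cohomological invariant --- for instance a non-trivial element of $\hat H^{-1}(\pi',M)$ for a suitably chosen subgroup $\pi'\leq\pi$, or an unramified Brauer-type class --- as the obstruction. The hard point is precisely that such an obstruction must exploit the non-cyclic $2$-torsion of $\pi$; it can have no analogue in the odd-prime cyclic case, which is exactly why the statement flips sign at $p=2$, $l\geq 3$.
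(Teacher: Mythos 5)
Parts (i) and (ii) of your proposal are correct, and part (i) is in fact exactly the argument the paper itself sketches immediately after Theorem \ref{thVos}: since $p\equiv 1\pmod{p-1}$, the prime $p$ splits completely in $\bQ(\zeta_{p-1})$, so a degree-one prime above $p$ is principal whenever $h(\bQ(\zeta_{p-1}))=1$, and by Masley--Montgomery this happens precisely for $p\leq 43$ and $p=61,67,71$. Your part (ii) is also sound: the reduction via the Endo--Miyata criterion to a norm-$\pm p$ element of $\bZ[\zeta_{p(p-1)}]$ is the intended route, and your splitting computations ($5=(\mathfrak{p}_1\mathfrak{p}_2)^4$ in $\bZ[\zeta_{20}]$, $7=(\mathfrak{p}_1\mathfrak{p}_2)^6$ in $\bZ[\zeta_{21}]=\bZ[\zeta_{42}]$) together with $h(\bQ(\zeta_{20}))=h(\bQ(\zeta_{21}))=1$ are correct. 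Note that the paper offers no proof of this theorem beyond the citation of \cite{EM73}, so for (ii) and (iii) there is strictly speaking nothing in the text to compare against.

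Part (iii) is where you have a genuine gap. You correctly diagnose that the norm test is useless here ($N(1-\zeta_{2^l})=\pm 2$) and that the obstruction must come from the non-cyclicity of $\pi=\mathrm{Gal}(\bQ(\zeta_{2^l})/\bQ)\cong C_2\times C_{2^{l-2}}$, and the flabby-class framework you invoke is the right one. But you never actually exhibit the obstruction: ``a non-trivial element of $\hat H^{-1}(\pi',M)$ for a suitably chosen subgroup $\pi'$, or an unramified Brauer-type class'' is a placeholder, not a computation, and the whole content of the non-rationality assertion lives in that computation. The cleanest way to close the gap, using only material quoted in this paper, is Lenstra's Main Theorem: condition (iii) there requires that $k(\zeta_{r(G)})/k$ be a \emph{cyclic} extension, where $r(G)$ is the highest power of $2$ dividing the exponent of $G$; for $G=C_{2^l}$ with $l\geq 3$ one has $r(G)=2^l$ and $\mathrm{Gal}(\bQ(\zeta_{2^l})/\bQ)$ is non-cyclic, so $\bQ(C_{2^l})$ is not stably rational. (Endo--Miyata's original argument in \cite{EM73} amounts to establishing exactly this cohomological non-vanishing for the non-cyclic $2$-group $\pi$; if you want to avoid citing Lenstra, you must carry out that computation rather than gesture at it.)
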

\begin{theorem}[Endo and Miyata {\cite[Theorem 4.4]{EM73}}]\label{thEM73m}
Let $G$ be a finite abelian group of odd order and $k$ be a 
field with {\rm char} $k=0$. Then there exists an integer $m>0$ 
such that $k(G^m)$ is rational over $k$. 
\end{theorem}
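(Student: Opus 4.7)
The plan is to reduce to cyclic groups of odd prime-power order, apply the Masuda-Swan-Endo-Miyata Galois descent to obtain an associated projective $\bZ[\pi]$-lattice, and then exploit the finiteness of the locally free class group of $\bZ[\pi]$ to find an exponent $m$ making the lattice free, hence a permutation module. To set up the reduction, note that since $G$ has odd order, $G \cong \prod_{i=1}^r C_{p_i^{l_i}}$ with the $p_i$ distinct odd primes, and $G^m \cong \prod_i C_{p_i^{l_i}}^m$ for any $m$. By iterating Theorem~\ref{thEM23}, it suffices to prove the statement for each factor $C_{p^l}$ with $p$ odd: if $m_i$ works for $C_{p_i^{l_i}}$, then with $m = \mathrm{lcm}(m_i)$ one writes $C_{p_i^{l_i}}^m \cong (C_{p_i^{l_i}}^{m_i})^{m/m_i}$ and applies Theorem~\ref{thEM23} repeatedly to conclude rationality of $k(G^m)$.

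Now fix $G = C_{p^l}$ with $p$ odd. Let $L = k(\zeta_{p^l})$ and $\pi = \mathrm{Gal}(L/k)$. The Fischer-Masuda descent (Theorems~\ref{thFis} and~\ref{thMasuda}, in the form extended to odd prime powers in Endo-Miyata's \cite{EM73}) expresses $k(G)$ as $L(M)^{\pi}(t)$ for a projective $\bZ[\pi]$-lattice $M$ of $\bZ[\pi]$-rank one, and $k(G)$ is rational over $k$ whenever $M$ is a permutation $\bZ[\pi]$-module. Applying the same descent to $G^m$ produces a projective $\bZ[\pi]$-lattice $M^{(m)}$ whose class in the locally free class group $\mathrm{Cl}(\bZ[\pi])$ can be computed from the $\pi$-orbit structure on the character group $\widehat{C_{p^l}^m}\setminus\{1\}$, and varies systematically with $m$ and with $[M]$.

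The crucial input is that $\mathrm{Cl}(\bZ[\pi])$ is a finite abelian group (a classical fact tied to the class number of the cyclotomic field $L$). Hence some $m$ makes $[M^{(m)}]$ trivial, i.e.\ $M^{(m)}$ is stably free. Since $\pi$ is abelian, $\bQ[\pi]$ is a product of fields and so satisfies the Eichler condition; by Swan-Jacobinski cancellation, stably free $\bZ[\pi]$-modules of rank at least two are free. Taking $m$ large enough to arrange both conditions, $M^{(m)}$ is a free $\bZ[\pi]$-module, hence a permutation $\bZ[\pi]$-module with basis $\pi\cdot\{e_1,\ldots,e_r\}$ for a $\bZ[\pi]$-basis. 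The generalized Masuda criterion (Theorem~\ref{thMasuda}(ii) in its natural extension) then yields that $L(M^{(m)})^{\pi}$, and therefore $k(G^m)$, is rational over $k$.

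The main obstacle is the character-orbit bookkeeping that identifies $M^{(m)}$ and its class. Naively one might hope for $M^{(m)} \cong M^{\oplus m}$, but the number of nontrivial $\pi$-orbits on $\widehat{C_{p^l}^m}$ scales like $|G|^m/|\pi|$ rather than $m$, and one must track the extension data of the surjection of the permutation module on $\widehat{G^m}\setminus\{1\}$ onto $\widehat{G^m}$ to see precisely how $[M^{(m)}]$ depends on $m$ and $[M]$. Once that dependence is pinned down, finiteness of $\mathrm{Cl}(\bZ[\pi])$ together with Eichler-type cancellation closes the argument; a secondary technical point is ensuring the rank bound needed to promote ``stably free'' to ``free'', which is harmless once $m$ is chosen sufficiently large.
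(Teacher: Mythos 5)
Your skeleton --- reduce via Theorem \ref{thEM23} to the cyclic factors $C_{p^l}$ of odd prime-power order, attach a projective rank-one $\bZ[\pi]$-lattice $M$ by Galois descent, and kill its class in a finite class group by passing to a power --- is indeed the shape of Endo and Miyata's argument (the paper states the theorem with only a citation to \cite{EM73}, so the comparison is with their proof), and your reduction step is correct as written. But there is a genuine gap precisely at what you call the ``main obstacle'': you never determine how $[M^{(m)}]$ depends on $m$, and without that the finiteness of $\mathrm{Cl}(\bZ[\pi])$ yields nothing, since an arbitrary sequence in a finite group need not contain the identity. The required statement is the additivity $[M^{(m)}]=m[M]$, and deferring it is deferring the actual content of the theorem. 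Moreover, for the lattice your construction literally produces --- the one coming from the regular representation of $C_{p^l}^{m}$ --- the final step cannot work: that lattice has $\bZ$-rank $p^{lm}-1\equiv -1\pmod p$, which is not divisible by $|\pi|$ whenever $p$ divides $|\pi|$ (e.g. $k=\bQ$ and $l\geq 2$), so it is never a free $\bZ[\pi]$-module for any $m$; nor is its projectivity justified, since Theorem \ref{thMasuda}(i) covers only the rank-one situation for $C_p$.

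The repair is the reduction that Endo--Miyata build into their setup and which is visible in condition (i) of their Theorem 3.1 as quoted in Section 2: before descending, replace the regular representation of $C_{p^l}^{m}$ by a minimal faithful subrepresentation $V^{\oplus m}$, where $V$ is a faithful $k[C_{p^l}]$-module of dimension $[k(\zeta_{p^l}):k]=|\pi|$; the no-name lemma makes $k(G^m)$ rational over $k(V^{\oplus m})^{G^m}$, so rationality of the latter suffices. After this step the lattice attached to $G^m$ is literally $M^{\oplus m}$, its class is $m[M]$ by definition, and your finiteness-plus-cancellation argument ($\bQ[\pi]$ commutative hence Eichler, stably free of rank $\geq 2$ implies free, free implies permutation, Masuda's criterion) does close the proof; oddness of $|G|$ is what guarantees both the projectivity of $M$ and the absence of Lenstra's $2$-adic cyclicity obstruction, which no power of $G$ can remove. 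As submitted, however, the identity on which the whole argument pivots is announced rather than proved, so the proposal is incomplete.
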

\begin{theorem}[Endo and Miyata {\cite[Theorem 4.6]{EM73}}]
Let $G$ be a finite abelian group. 
Then $\bQ(G)$ is rational over $\bQ$ if and only if 
$\bQ(G)$ is stably rational over $\bQ$.
\end{theorem}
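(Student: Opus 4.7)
The direction \emph{rational $\Rightarrow$ stably rational} is immediate: if $\bQ(G)=\bQ(z_1,\ldots,z_m)$, then $\bQ(G)(t_1,\ldots,t_n)=\bQ(z_1,\ldots,z_m,t_1,\ldots,t_n)$ is purely transcendental. The plan for the converse is to recast the question as one about $\bZ[\pi]$-lattices via the Masuda-style Galois descent sketched before Theorem \ref{thMasuda}, invoke the Endo--Miyata--Voskresenskii dictionary between stable rationality of a torus function field and a stably-permutation condition on its character lattice, and finally argue that for the lattices actually arising from an abelian $G$, being stably permutation already forces being permutation.

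First I would generalize the setup from $C_p$ to an arbitrary finite abelian $G$ of exponent $e$. Put $L=\bQ(\zeta_e)$ and $\pi=\mathrm{Gal}(L/\bQ)$. By Theorem \ref{thFis}, $L(G)$ is rational over $L$, and the same linearization as in Masuda's setup yields transcendence generators on which $\pi$ acts monomially through a $\bZ[\pi]$-lattice $M$, together with one $\pi$-invariant generator $y_0=\sum_{g\in G}x_g$. Therefore $\bQ(G)=L(M)^{\pi}(y_0)$, so $\bQ(G)$ is stably $\bQ$-birational to the function field $L(M)^{\pi}$ of an algebraic $\bQ$-torus split by $L$. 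The classical Voskresenskii/Endo--Miyata criterion for tori then gives: $L(M)^{\pi}$ is stably rational over $\bQ$ iff $M$ is a stably permutation $\bZ[\pi]$-lattice, i.e.\ $M\oplus P_1\cong P_2$ for some permutation $\bZ[\pi]$-lattices $P_1,P_2$. The stable rationality hypothesis on $\bQ(G)$ therefore translates into $M$ being stably permutation.

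The main step, and the one I expect to be the hardest, is to promote \emph{stably permutation} to \emph{permutation} for the specific class of lattices $M$ produced by this construction when $G$ is finite abelian. Since $\pi\subseteq (\bZ/e\bZ)^{\times}$ is itself abelian, one can decompose $\pi$ along its idempotents (equivalently, along the subfield structure of $L/\bQ$) and reduce to rank-one summands; in the rank-one case (which is exactly the situation of Theorem \ref{thMasuda}(i) for $G=C_p$) the class of $M$ lives in the Picard group of an order in a cyclotomic ring, and the stably-permutation hypothesis translates into the vanishing of this class, which is equivalent to $M$ being free of rank one, hence permutation. For general abelian $G$ the same mechanism, carried out block-by-block, shows that $M$ is a direct sum of permutation lattices. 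The technical heart is precisely the invariant-theoretic computation that identifies the obstruction with the norm-type condition of Theorem \ref{thVos} (and its analogues for prime-power cyclic factors via Theorem 3.1); this is where Endo--Miyata's earlier K-theoretic work on $\bZ[\pi]$-projectives does the real work.

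Once $M$ is known to be a permutation $\bZ[\pi]$-lattice, the argument of Theorem \ref{thMasuda}(ii) (in its evident generalization to the present setup) gives that $L(M)^{\pi}$ is rational over $\bQ$, and then $\bQ(G)=L(M)^{\pi}(y_0)$ is rational over $\bQ$. Combining the two implications yields the stated equivalence. The forward direction is a tautology; Step 1 (torus reduction) and Step 4 (rationality from the permutation case) are standard once the framework is set up; the substantive content, and the main obstacle, is the Picard-group computation that upgrades stably permutation to permutation for the lattices arising from abelian Noether data.
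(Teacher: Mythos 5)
The paper does not prove this statement; it is quoted verbatim from Endo--Miyata \cite[Theorem 4.6]{EM73}, so the only meaningful comparison is with their argument. Your forward direction is of course trivial, and your overall architecture (Galois descent to a torus function field $L(M)^{\pi}$, translation of stable rationality into a lattice-theoretic condition on $M$, reduction to cyclic prime-power blocks, and the rationality criterion of Theorem \ref{thVos}) is the right framework and is essentially the one Endo--Miyata work in. Two points need repair, one minor and one substantive. The minor one: the general criterion for a torus is not ``$L(M)^{\pi}$ stably rational iff $M$ is stably permutation'' but ``iff the flabby class of $M$ vanishes''; these coincide here only because $M$ is an invertible (rank-one projective) $\bZ[\pi]$-lattice by Theorem \ref{thMasuda}(i), so that a flabby resolution of $M$ can be taken with flabby quotient a complement of $M$ in a permutation module. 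You should say this, since it is exactly the projectivity of $M$ that makes your translation legitimate.

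The substantive gap is the step ``stably permutation $\Rightarrow$ permutation.'' You argue that the stably-permutation hypothesis ``translates into the vanishing of the class of $M$ in the Picard group, which is equivalent to $M$ being free.'' But permutation lattices $\bZ[\pi/\pi']$ are not projective over $\bZ[\pi]$ (already $\bZ$ with trivial action is not, for $\pi\neq 1$), so an isomorphism $M\oplus P_1\cong P_2$ with $P_1,P_2$ permutation is not a relation in the projective class group and does not formally force $[M]=0$ in $C(\bZ[\pi])$; indeed, for general $\pi$ and general invertible lattices, stably permutation does not imply permutation. Closing this gap is precisely the content of Endo--Miyata's work: one must reduce (via Theorem \ref{thEM23} and a converse for stable rationality) to the cyclic prime-power factors of $G$, and there show that the stable-rationality condition already forces the norm condition (iii) of their Theorem 3.1 --- equivalently, that for the particular orders $\bZ[\pi]$ with $\pi\subset(\bZ/\varphi(p^s)\bZ)^{\times}$ and the particular ideals $M$ arising, adding permutation summands can be cancelled. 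This uses their analysis of the exact sequences relating $C(\bZ[\pi])$ to ideal class groups of the cyclotomic subfields (and cancellation over $\bZ[\pi]$), none of which is automatic from ``the class in the Picard group vanishes.'' As written, your proposal asserts the theorem's hardest implication rather than proving it.
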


Ultimately, 
Lenstra \cite{Len74} gave a necessary and sufficient condition 
of Noether's problem for abelian groups. 
\begin{theorem}[Lenstra {\cite[Main Theorem, Remark 5.7]{Len74}}]
Let $k$ be a field and $G$ be a finite abelian group. 
Let $k_{\rm cyc}$ be the maximal cyclotomic extension of $k$ in 
an algebraic closure. 
For $k\subset K\subset k_{\rm cyc}$, we assume that 
$\rho_K={\rm Gal}(K/k)=\langle\tau_k\rangle$ is finite cyclic.
Let $p$ be an odd prime with $p\neq {\rm char}$ $k$ and $s\geq 1$ be an integer. 
Let $\mathfrak{a}_K(p^s)$ be a $\bZ[\rho_K]$-ideal defined by 
\begin{align*}
\mathfrak{a}_K(p^s)=
\begin{cases}
\bZ[\rho_K] & {\rm if}\ K\neq k(\zeta_{p^s})\\
(\tau_K-t,p) & {\rm if}\ K=k(\zeta_{p^s})\ {\rm where}\ 
t\in\bZ\ {\rm satisfies}\ \tau_K(\zeta_p)=\zeta_p^t
\end{cases}
\end{align*}
and put $\mathfrak{a}_K(G)=\prod_{p,s}\mathfrak{a}_K(p^s)^{m(G,p,s)}$ 
where $m(G,P,s)=\dim_{\bZ/p\bZ}(p^{s-1}G/p^sG)$. 
Then the following conditions are equivalent:\\
{\rm (i)} $k(G)$ is rational over $k$;\\
{\rm (ii)} $k(G)$ is stably rational over $k$;\\
{\rm (iii)} for $k\subset K\subset k_{\rm cyc}$, 
the $\bZ[\rho_K]$-ideal $\mathfrak{a}_K(G)$ is principal 
and if {\rm char} $k\neq 2$, then $k(\zeta_{r(G)})/k$ is cyclic extension 
where $r(G)$ is the highest power of $2$ dividing the exponent of $G$. 
\end{theorem}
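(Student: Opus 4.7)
The implication $(\mathrm{i})\Rightarrow(\mathrm{ii})$ is immediate, so the task is to establish $(\mathrm{ii})\Leftrightarrow(\mathrm{iii})\Rightarrow(\mathrm{i})$. My plan is to transport the rationality question into the module theory of a character lattice, extending Masuda's descent (Theorem~\ref{thMasuda}) from $C_p$ to general finite abelian $G$. Let $e$ be the exponent of $G$ and set $L=k(\zeta_e)$, $\pi=\mathrm{Gal}(L/k)$. Fischer's theorem (Theorem~\ref{thFis}) makes $L(G)$ rational over $L$; after passing to an $L$-basis of eigencoordinates on which $\pi$ acts through cyclotomic twists of the characters of $G$, one expresses $k(G)$ as $L(M)^\pi(y_0)$, where $M$ is a $\bZ[\pi]$-lattice built canonically from $\widehat{G}$ and $y_0$ is a $\pi$-invariant transcendental. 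Consequently $k(G)$ is (stably) $k$-rational iff the algebraic $k$-torus $T_M$ with character module $M$ is.

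Next I would apply Voskresenskii's birational classification of algebraic tori: $T_M$ is stably $k$-rational iff $M$ is an \emph{invertible} $\bZ[\pi]$-lattice, i.e.\ a direct summand of a permutation lattice. Decomposing $\widehat{G}$ according to the $p$-primary cyclic eigenspaces of $\pi$ splits $M$ into components indexed by intermediate fields $K=k(\zeta_{p^s})\subset k_{\mathrm{cyc}}$. The core computation is to identify the component attached to level $(p,s)$, counted with multiplicity $m(G,p,s)=\dim_{\bZ/p}(p^{s-1}G/p^sG)$, with the ideal $\mathfrak{a}_K(p^s)$. The short exact sequence
\[
0\longrightarrow\mathfrak{a}_K(p^s)\longrightarrow\bZ[\rho_K]\longrightarrow\bZ[\rho_K]/(\tau_K-t,p)\longrightarrow 0
\]
encodes precisely the $\pi$-action on a primitive $p^s$-th root of unity. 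Multiplying in the Picard group of $\bZ[\rho_K]$ assembles the full lattice, so invertibility of $M$ becomes principality of $\mathfrak{a}_K(G)=\prod_{p,s}\mathfrak{a}_K(p^s)^{m(G,p,s)}$ for every cyclic intermediate $K$. The cyclicity of $\rho_K$ is precisely what guarantees that $\bZ[\rho_K]$ is an order whose rank-one projectives coincide with its invertible fractional ideals; this also explains why the auxiliary hypothesis on $k(\zeta_{r(G)})/k$ must be imposed in characteristic $\neq 2$, since $\mathrm{Gal}(k(\zeta_{2^n})/k)$ can fail to be cyclic for $n\geq 3$ and a separate obstruction argument is then needed to rule out stable rationality.

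The main obstacle will be the upgrade from stable rationality to rationality, namely $(\mathrm{iii})\Rightarrow(\mathrm{i})$. The strategy is to use the principality hypothesis on each $\mathfrak{a}_K(G)$ to exhibit an explicit permutation resolution $0\to P\to Q\to M\to 0$ of $\bZ[\pi]$-lattices whose induced chain $L(Q)^\pi\supset L(P\oplus M)^\pi\supset L(M)^\pi$ can be descended, one layer at a time, to a concrete transcendence basis of $L(M)^\pi$ over $k$. Each intermediate layer is rational by Endo--Miyata-style arguments (cf.\ Theorem~\ref{thEM23} and the multiplicative behavior used in Theorem~\ref{thEM73m}). The delicate combinatorial heart of the proof is to produce an honest transcendence basis rather than a mere stable-rationality witness; this is where the explicit generators coming from principality of the $\mathfrak{a}_K(G)$, together with the cyclicity of $\rho_K$, are woven into the descent to provide the required rational parametrization.
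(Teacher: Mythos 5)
The first thing to say is that there is no proof in this paper to compare against: the statement is Lenstra's Main Theorem, quoted verbatim from \cite{Len74} in the survey section, and Hoshi neither proves it nor uses any ingredient of its proof beyond the corollaries he also quotes. So your proposal can only be measured against Lenstra's original argument, and as a reconstruction of that argument it is a reasonable roadmap but not a proof; two of its load-bearing steps are respectively incorrect as stated and merely promised.

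First, the pivot ``$T_M$ is stably $k$-rational iff $M$ is an invertible $\bZ[\pi]$-lattice'' is not Voskresenskii's classification. The correct general statement (Endo--Miyata, Voskresenskii) is that $T_M$ is stably rational iff the flabby class $[M]^{fl}$ vanishes, i.e.\ $M$ is quasi-permutation; invertibility of the flabby class characterizes the weaker property of retract rationality, and an invertible $M$ only exhibits $T_M$ as a direct factor of a rational variety, which does not by itself give stable rationality. To make your reduction work you need the finer module theory that Lenstra (following Endo--Miyata) actually develops: $M$ is a rank-one projective over the relevant quotient of the group ring of the cyclic group $\rho_K$, its class is computed by the fractional ideal $\mathfrak{a}_K(G)$, and for such modules principality of the ideal, triviality of the flabby class, and the existence of the required permutation resolution all coincide. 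None of this is automatic from the exact sequence you write down. Second, the implication (iii)$\Rightarrow$(i) --- upgrading to honest rationality --- is exactly where Lenstra's paper does most of its work, by converting an explicit generator of the principal ideal $\mathfrak{a}_K(G)$ into an explicit transcendence basis of $k(G)$; your paragraph announces this as ``the delicate combinatorial heart'' but supplies no mechanism for it, and the Endo--Miyata results you invoke (Theorems \ref{thEM23} and \ref{thEM73m}) give only stable or asymptotic rationality, not the layer-by-layer descent you need. Finally, the $2$-primary clause is not a footnote: when $k(\zeta_{r(G)})/k$ fails to be cyclic one must prove non-rationality by a separate invariant (this is where $8\nmid n$ in Lenstra's Corollary 7.2 comes from), and your sketch does not indicate what that invariant is.
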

\begin{theorem}[Lenstra {\cite[Corollary 7.2]{Len74}, see also \cite[Proposition 2, Corollary 3]{Len80}}]
Let $n$ be a positive integer. 
Then the following conditions are equivalent:\\
{\rm (i)} $\bQ(C_n)$ is rational over $\bQ$;\\
{\rm (ii)} $k(C_n)$ is rational over $k$ for any field $k$;\\
{\rm (iii)} $\bQ(C_{p^s})$ is rational over $\bQ$ 
for any $p^s$ $||$ $n$;\\
{\rm (iv)} $8$ $\not{|}$ $n$ and for any $p^s$ $||$ $n$, 
there exists $\alpha\in \bZ[\zeta_{\varphi(p^s)}]$ such that 
$N_{\bQ(\zeta_{\varphi(p^s)})/\bQ}(\alpha)=\pm p$. 
\end{theorem}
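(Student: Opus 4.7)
The plan is to establish the cycle $(ii) \Rightarrow (i) \Rightarrow (iii) \Rightarrow (iv) \Rightarrow (ii)$. The implication $(ii) \Rightarrow (i)$ is trivial by specializing $k = \bQ$. My main tools are the preceding theorem (Lenstra's main theorem), Endo--Miyata's Theorem 3.1 on cyclic prime-power groups, Endo--Miyata's Proposition 3.4 on $\bQ(C_{2^l})$, and the product theorem \ref{thEM23} of Endo--Miyata.

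For $(i) \Leftrightarrow (iii)$, I would apply Lenstra's main theorem to $G = C_n$ over $k = \bQ$. A short computation gives $m(C_n, p, s) = 1$ for $1 \leq s \leq v_p(n)$ and $0$ otherwise, so
\[
\mathfrak{a}_K(C_n) \;=\; \prod_{p} \prod_{s=1}^{v_p(n)} \mathfrak{a}_K(p^s).
\]
The crucial observation is that by definition $\mathfrak{a}_K(p^s) = \bZ[\rho_K]$ unless $K = \bQ(\zeta_{p^s})$, so for any given intermediate field $K$ at most one factor above is nontrivial. Hence principality of the product decouples into principality of each $\mathfrak{a}_K(p^s)$ separately. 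The auxiliary cyclicity condition depends only on $r(C_n) = 2^{v_2(n)}$. Running Lenstra's theorem backwards for each $\bQ(C_{p^s})$ with $p^s \| n$ then yields (iii).

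For $(iii) \Leftrightarrow (iv)$, I invoke Endo--Miyata's Theorem 3.1 at each odd prime power $p^s \| n$: with $k = \bQ$, one has $[\bQ(\zeta_{p^s}):\bQ] = p^{s-1}(p-1)$, so $m_0 = s-1$ and $d_0 = p-1$; both branches of that theorem collapse to the single norm condition $N_{\bQ(\zeta_{\varphi(p^s)})/\bQ}(\alpha) = \pm p$. The case $p = 2$ is handled directly: $\bQ(C_2)$ and $\bQ(C_4)$ are rational, and the norm condition is trivially satisfied since $\bZ[\zeta_{\varphi(2^s)}] = \bZ$ for $s \leq 2$; for $s \geq 3$, Endo--Miyata's Proposition 3.4 gives non-stable rationality of $\bQ(C_{2^s})$, forcing $8 \nmid n$.

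The main obstacle is $(iv) \Rightarrow (ii)$, where the arithmetic condition over $\bQ$ must yield rationality over every base field $k$. I apply Lenstra's main theorem now to $k(C_n)$, reusing the same product decomposition to reduce to principality of $\mathfrak{a}_{k(\zeta_{p^s})}(p^s)$ for each $p^s \mid n$ together with cyclicity of $k(\zeta_{r(C_n)})/k$. The cyclicity is automatic because $\mathrm{Gal}(k(\zeta_{r(C_n)})/k)$ embeds in $(\bZ/r(C_n)\bZ)^\times$, which is cyclic whenever $8 \nmid n$. The delicate step is descending principality from the $\bQ$-norm condition: since $\rho_{k(\zeta_{p^s})}$ is a subgroup of $(\bZ/p^s\bZ)^\times$, a norm/trace argument should carry the generator $\alpha$ of (iv) down to a generator of the smaller ideal over $k$. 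Equivalently, one invokes Endo--Miyata's Theorem 3.1 directly over each $k$ (extended to positive characteristic via Kang--Plans \cite{KP09}, with Kuniyoshi's theorem covering the case $p = \mathrm{char}\,k$) to obtain $k(C_{p^s})$ rational for each $p^s \| n$, and then assembles $k(C_n) = k\bigl(\prod_{p^s \| n} C_{p^s}\bigr)$ via Theorem \ref{thEM23}.
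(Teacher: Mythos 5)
The paper itself offers no proof of this statement: it is quoted verbatim as Lenstra's Corollary 7.2, so there is nothing to compare your argument against except the surrounding survey results. Your architecture --- the cycle (ii) $\Rightarrow$ (i) $\Rightarrow$ (iii) $\Rightarrow$ (iv) $\Rightarrow$ (ii), with Lenstra's Main Theorem decoupling the prime-power parts (at most one factor $\mathfrak{a}_K(p^s)$ is nontrivial for a given $K$, and the $2$-part is exactly the cyclicity of $\bQ(\zeta_{2^{v_2(n)}})/\bQ$) and Endo--Miyata's Theorem 3.1 translating rationality of $\bQ(C_{p^s})$ into the norm equation --- is sound, and your computations of $m(C_n,p,s)$ and of $m_0=s-1$, $d_0=p-1$ over $\bQ$ are correct.

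The gap is in (iv) $\Rightarrow$ (ii), precisely the step you flag as delicate and then dispatch with ``a norm/trace argument should carry the generator down.'' That descent is the entire content of the implication and must actually be performed: over an arbitrary $k$ of characteristic $0$ one has $[k(\zeta_{p^s}):k]=p^{m_0}d_0$ with $p^{m_0}d_0$ in general a proper divisor of $\varphi(p^s)$, so Endo--Miyata's criterion over $k$ asks for $\alpha'\in\bZ[\zeta_{p^{m_0}d_0}]$ of norm $\pm p$ (or $\pm p^s$ when $m_0=0$), a priori a different equation from the one in (iv). The fix is short but needs saying: since $p^{m_0}d_0$ divides $\varphi(p^s)$, transitivity of the norm gives that $\beta=N_{\bQ(\zeta_{\varphi(p^s)})/\bQ(\zeta_{p^{m_0}d_0})}(\alpha)$ lies in $\bZ[\zeta_{p^{m_0}d_0}]$ and satisfies $N_{\bQ(\zeta_{p^{m_0}d_0})/\bQ}(\beta)=N_{\bQ(\zeta_{\varphi(p^s)})/\bQ}(\alpha)=\pm p$, and in the branch $m_0=0$ one takes $\beta^s$ to reach $\pm p^s$. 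Two smaller points: the paper records Kang--Plans only as extending the product theorem (Theorem \ref{thEM23}) to arbitrary fields, not Endo--Miyata's Theorem 3.1, so for ${\rm char}\ k=q>0$ with $q\neq p$ you should route the argument through Lenstra's Main Theorem (stated for arbitrary $k$) rather than through Endo--Miyata; and if you assemble $k(C_n)$ via Theorem \ref{thEM23} you still owe an argument that $k(C_4)$ is rational over every $k$, since the odd-prime machinery you quote does not cover it (whereas invoking Lenstra's Main Theorem for $C_n$ directly makes the $2$-part automatic from $8\nmid n$).
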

\begin{theorem}[{Lenstra \cite[Corollary 7.6]{Len74}, see also 
\cite[Proposition 6]{Len80}}]
Let $k$ be a field which is finitely generated over its prime field. 
Let $P_k$ be the set of primes  $p$ for which 
$k(C_p)$ is rational over $k$. Then $P_k$ has Dirichlet density $0$ 
inside the set of all primes $p$. 
In particular, 
\[
\lim_{x\rightarrow\infty}\frac{\pi^*(x)}{\pi(x)}=0
\]
where $\pi(x)$ is the number of primes $p\leq x$, 
and $\pi^*(x)$ is the number of primes $p\leq x$ 
for which $\bQ(C_p)$ is rational over $\bQ$. 
\end{theorem}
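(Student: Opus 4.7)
The plan is as follows. By Lenstra's main theorem applied to $G = C_p$ (odd prime, $p \neq \mathrm{char}\, k$), the rationality of $k(C_p)$ amounts to the principality of the single ideal $\mathfrak{a}_K(p) = (\tau_K - t, p)$ in $\bZ[\rho_K]$ for $K = k(\zeta_p)$, since $\mathfrak{a}_K(p) = \bZ[\rho_K]$ trivially for every other cyclotomic intermediate field. Writing $d_p = [k(\zeta_p):k]$, this principality is equivalent (via the natural surjection $\bZ[\rho_K] \twoheadrightarrow \bZ[\zeta_{d_p}]$ and the fact that $p$ splits completely in $\bQ(\zeta_{d_p})$) to the existence of a principal prime ideal of $\bZ[\zeta_{d_p}]$ of norm $p$ --- the natural generalization of Voskresenskii's criterion.

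Because $k$ is finitely generated over its prime field, $k$ contains only finitely many roots of unity, so there is a constant $e_k$ with $d_p = (p-1)/e_k$ for all but finitely many primes $p$ (and an analogous statement holds in positive characteristic). For each prime $q$ dividing $d_p$, applying the norm $N_{\bQ(\zeta_{d_p})/\bQ(\zeta_q)}$ to the principal prime above $p$ yields a principal prime of $\bZ[\zeta_q]$ of norm $p$, since $p$ is totally split in all intermediate cyclotomic fields. By class field theory, a prime of $\bZ[\zeta_q]$ of norm $p$ is principal if and only if $p$ splits completely in the Hilbert class field $H_q$ of $\bQ(\zeta_q)$.

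To estimate the Dirichlet density, fix an integer $M$ and partition $P_k$ (up to finitely many exceptions) into $P_k^{(i)} \cup P_k^{(ii)}$ according as $d_p$ is $M$-smooth or has a prime factor $q > M$. The set $P_k^{(i)}$ has Dirichlet density $0$: only $O((\log X)^{\pi(M)})$ primes $p \leq X$ satisfy $p-1$ being $M$-smooth. For $P_k^{(ii)}$, the analysis above gives
\[
P_k^{(ii)} \subseteq \bigcup_{q > M}\{\, p : p \text{ splits completely in } H_q \,\},
\]
so the union bound and the Chebotarev density theorem yield
\[
\delta(P_k^{(ii)}) \leq \sum_{q > M} \frac{1}{[H_q : \bQ]} = \sum_{q > M} \frac{1}{(q-1)\, h(\bQ(\zeta_q))}.
\]

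By the Brauer--Siegel theorem (or explicit Odlyzko-type discriminant bounds), $h(\bQ(\zeta_q))$ grows rapidly with $q$; in particular $(q-1)\, h(\bQ(\zeta_q)) \gg q^2$ for $q$ large, so the series converges and its tail tends to $0$ as $M \to \infty$. Combined with $\delta(P_k^{(i)}) = 0$ we conclude $\delta(P_k) = 0$. The main obstacles are, first, the translation of Lenstra's ideal-theoretic criterion into a principal-prime statement in the cyclotomic ring $\bZ[\zeta_{d_p}]$, and second, the descent via norms to $\bZ[\zeta_q]$ for $q \mid d_p$; both amount to norm-chasing arguments that become clean once the total splitting of $p$ is observed, and the remaining analytic input (convergence of the class-number series) is standard.
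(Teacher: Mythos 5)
The paper does not prove this statement; it is quoted verbatim as a result of Lenstra (\cite[Corollary 7.6]{Len74}, \cite[Proposition 6]{Len80}), so your proposal can only be measured against what a correct argument must contain. Your overall skeleton is the right one and matches Lenstra's strategy: reduce rationality of $k(C_p)$ to principality of a degree-one prime above $p$ in $\bZ[\zeta_{d_p}]$, push that prime down by norms to $\bZ[\zeta_q]$ for $q\mid d_p$ (using that $p$ splits completely, since $q\mid p-1$), reformulate principality as complete splitting in the Hilbert class field $H_q$, and exploit the rapid growth of $h(\bQ(\zeta_q))$. The smoothness dichotomy and the count $O((\log X)^{\pi(M)})$ for the $M$-smooth part are also fine.

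There is, however, a genuine gap at the decisive analytic step. You bound $\overline{\delta}(P_k^{(ii)})$ by $\sum_{q>M} 1/[H_q:\bQ]$ via ``the union bound and Chebotarev,'' but upper Dirichlet density is not countably subadditive, and the Chebotarev asymptotics for $\mathrm{Spl}(H_q)$ are not uniform in $q$; the only uniform majorant available, $\sum_{p\equiv 1\ (q)}p^{-s}\le q^{-s}\zeta(s)$, is of order $(s-1)^{-1}$ rather than $\log\frac{1}{s-1}$, so the limit and the infinite sum cannot be interchanged. The standard repair is a three-way decomposition: given $\varepsilon>0$, first choose $M$ with $\sum_{q>M}\bigl((q-1)h(\bQ(\zeta_q))\bigr)^{-1}<\varepsilon/2$, then choose $N$ so large that the set of primes $p$ with $p-1$ divisible by no prime in $(M,N]$ has density $\prod_{M<q\le N}\bigl(1-\tfrac{1}{q-1}\bigr)<\varepsilon/2$ (possible since $\sum_q 1/q$ diverges); the primes of $P_k$ not covered by these two sets lie in the \emph{finite} union $\bigcup_{M<q\le N}\mathrm{Spl}(H_q)$, to which finite subadditivity and Chebotarev legitimately apply. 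Separately, your reduction of $d_p$ to $(p-1)/e_k$ is correct in characteristic $0$ (the algebraic closure of $\bQ$ in $k$ is a number field, so $d_p=p-1$ for all but finitely many $p$), but the parenthetical ``analogous statement in positive characteristic'' is not analogous: there $d_p$ is the multiplicative order of the constant-field cardinality modulo $p$, which fluctuates and can be as small as $\log p$, so the smoothness dichotomy for $d_p$ needs a genuinely different treatment in that case.
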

\begin{theorem}[Lenstra {\cite[Proposition 4]{Len80}}]
Let $p$ be a prime and $s\geq 2$ be an integer. 
Then $\bQ(C_{p^s})$ is rational over $\bQ$ if and only if 
$p^s\in\{2^2, 3^m, 5^2,7^2\mid m\geq 2\}$. 
\end{theorem}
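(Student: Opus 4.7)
My plan is to apply Endo-Miyata's Theorem 3.1 (for odd $p$) and Theorem 2.8 (iii) (for $p=2$) as the main reduction. For an odd prime $p$ and $s \geq 2$, Theorem 3.1 (applied with $k = \bQ$, so $m_0 = s-1 > 0$ and $d_0 = p-1$, hence $p^{m_0}d_0 = \varphi(p^s)$) gives that $\bQ(C_{p^s})$ is rational over $\bQ$ if and only if there exists $\alpha \in \bZ[\zeta_{\varphi(p^s)}]$ with $N_{\bQ(\zeta_{\varphi(p^s)})/\bQ}(\alpha) = \pm p$. For $p = 2$, Theorem 2.8 (iii) rules out all $s \geq 3$, while $\bQ(C_4)$ is rational ($\varphi(4) = 2$ and $\alpha = 2 \in \bZ[\zeta_2] = \bZ$ has $N(\alpha) = 2$), so only $p^s = 4$ survives.

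For the positive direction with $p$ odd, Endo-Miyata's Proposition 3.2 gives rationality of $\bQ(C_{3^s})$ for every $s$ (since $\bQ$ contains $\zeta_3 + \zeta_3^{-1} = -1$), and Theorem 2.8 (ii) gives rationality of $\bQ(C_{25})$ and $\bQ(C_{49})$. Together with $\bQ(C_4)$, these account for the full affirmative set $\{4,\ 3^m\ (m \geq 2),\ 25,\ 49\}$.

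For the negative direction I would proceed in two layers. First, the equivalence (ii) $\Leftrightarrow$ (ii${}^\prime$) of Theorem 3.1 says that, for $s \geq 2$, $\bQ(C_{p^s})$ is rational if and only if $\bQ(C_{p^{s^\prime}})$ is rational for every $1 \leq s^\prime \leq s$; in particular, rationality at level $s$ forces rationality at level $1$, hence $p \in R$ by the Masley-Montgomery classification recalled after Theorem 2.6. This disposes of all odd $p \notin R$. The same upward propagation further reduces the residual obligations to: non-rationality of $\bQ(C_{p^2})$ for $p \in R \setminus \{2,3,5,7\}$, together with non-rationality of $\bQ(C_{125})$ and $\bQ(C_{343})$.

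The last step is the main obstacle. Each remaining case reduces, via Theorem 3.1, to showing that no $\alpha \in \bZ[\zeta_{\varphi(p^s)}]$ has $N(\alpha) = \pm p$. Since $p \equiv 1 \pmod{p-1}$, the prime $p$ splits completely in $\bQ(\zeta_{p-1})$ and is totally ramified in $\bQ(\zeta_{p^{s-1}})$, so in $\bZ[\zeta_{\varphi(p^s)}]$ every prime above $p$ has residue degree one and norm $p$; existence of $\alpha$ is therefore equivalent to principality of such a prime. Ruling this out for each of the finitely many fields above is the hard part. I would handle it either by direct class group computation in each $\bQ(\zeta_{\varphi(p^s)})$, or, more structurally, by invoking Lenstra's refined obstruction: principality of the group-ring ideal $(\tau_K - t,\, p) \subset \bZ[\rho_K]$ for every cyclic $K$ in $\bQ_{\rm cyc}/\bQ$, which at $K = \bQ(\zeta_{p^{s^\prime}})$ is strictly sharper than class-group principality and is enough to kill the residual cases. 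Collecting the four layers yields exactly $p^s \in \{4,\ 3^m\ (m \geq 2),\ 25,\ 49\}$.
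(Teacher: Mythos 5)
First, a caveat: the paper does not prove this statement at all --- it is quoted from Lenstra \cite{Len80} as Proposition 4 --- so there is no internal proof to compare yours against, and I can only assess your argument on its own terms. Your framework is sound as far as it goes: the norm criterion from Endo--Miyata's Theorem 3.1 (for odd $p$ and $s\geq 2$ one has $m_0=s-1>0$ and $p^{m_0}d_0=\varphi(p^s)$), the exclusion of $2^s$ for $s\geq 3$, the rationality of $\bQ(C_4)$, $\bQ(C_{3^m})$, $\bQ(C_{25})$ and $\bQ(C_{49})$, and the use of (ii)$\Leftrightarrow$(ii$'$) to reduce the negative direction to the smallest non-rational level are all correct; so is the identification of solvability of $N(\alpha)=\pm p$ with principality of a degree-one prime above $p$.

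The negative direction nevertheless contains a genuine error, in addition to the gap you admit at the end. The step ``rationality at level $1$ forces $p\in R$ by the Masley--Montgomery classification'' is false. Masley--Montgomery classify the primes for which $\bQ(\zeta_{p-1})$ has class number one; by Voskresenskii's criterion this is a \emph{sufficient} condition for rationality of $\bQ(C_p)$, not a necessary one. Whether $\bQ(C_p)$ can be rational for some $p\notin R$ is precisely the open question this paper is concerned with (the undetermined set $U$, e.g.\ $p=251$). Consequently your reduction to finitely many residual cases collapses: for every odd prime $p\geq 11$ whose level-$1$ status is not known to be negative --- infinitely many primes --- non-rationality of $\bQ(C_{p^2})$ cannot be inherited from level $1$ and must be established directly in $\bQ(\zeta_{p(p-1)})$ by a uniform argument. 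That uniform argument is the actual content of Lenstra's proposition, and it is absent from your proposal. Moreover, even the finitely many cases you do retain ($p^2$ for $p\in R\setminus\{2,3,5,7\}$, together with $125$ and $343$) are not resolved: the ``direct class group computation'' is not carried out, and ``invoking Lenstra's refined obstruction'' in order to prove Lenstra's own proposition is circular. What you have is a correct reduction of the positive cases and a correct setup of the obstruction, but the heart of the negative direction --- proving non-principality of the relevant primes for all $p\geq 11$ at once --- is missing.
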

However, even in the case $k=\bQ$ and $p< 1000$, 
there exist primes $p$ (e.g. $59$, $83$, $107$, $251$, etc.) 
such that the rationality of 
$\bQ(C_p)$ over $\bQ$ is undetermined (see Theorem \ref{thmain}). 
Moreover, 
we do not know whether there exist infinitely many primes $p$ 
such that $\bQ(C_p)$ is rational over $\bQ$. 
This derives a motivation of this paper. 

We finally remark that 
although $\bC(G)$ is rational over $\bC$ for any abelian group $G$ by Theorem \ref{thFis}, 
Saltman \cite{Sal84} gave a $p$-group $G$ of order $p^9$ for which 
Noether's problem has a negative answer over $\bC$
using the unramified Brauer group $B_0(G)$. 
Indeed, one can see that $B_0(G)\neq 0$ implies that $\bC(G)$ is not 
retract rational over $\bC$, and hence not (stably) rational over $\bC$.

\begin{theorem}\label{thSB}
Let $p$ be any prime.  \\
{\rm (i)}\ $(${\rm Saltman} \cite{Sal84}$)$ 
There exists a meta-abelian $p$-group $G$ of order $p^9$ such that $B_0(G)\neq 0$;\\
{\rm (ii)}\ $(${\rm Bogomolov} \cite{Bog88}$)$ 
There exists a group $G$ of order $p^6$ such that $B_0(G)\neq 0$;\\
{\rm (iii)}\ $(${\rm Moravec} \cite{Mor12}$)$ 
There exist exactly $3$ groups $G$ of order $3^5$ such that $B_0(G)\neq 0$;\\
{\rm (iv)}\ $(${\rm Hoshi, Kang and Kunyavskii} \cite{HKK13}$)$ 
For groups $G$ of order $p^5$ $(p\ge 5)$, 
$B_0(G)\ne 0$ if and only if $G$ belongs to the isoclinism family
$\Phi_{10}$. 
There exist exactly $1+\gcd\{4,p-1\}+\gcd \{3,p-1\}$ 
groups $G$ of order $p^5$ $(p\geq 5)$ such that  $B_0(G)\neq 0$.

In particular, for the cases where $B_0(G)\neq 0$, 
$\bC(G)$ is not retract rational over $\bC$. 
Thus $\bC(G)$ is not $($stably$)$ rational over $\bC$. 
\end{theorem}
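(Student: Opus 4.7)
My plan is to separate the theorem into two components: (a) the unramified-Brauer-group computation in each of (i)--(iv), and (b) the final implication that $B_0(G)\neq 0$ forces $\bC(G)$ to be non-(retract-)rational. For (b) I would invoke Saltman's theorem that the unramified Brauer group $B_0(\bC(G))$ is a retract-rational invariant, together with Bogomolov's identification $B_0(\bC(G))\simeq B_0(G)$, the latter being defined purely group-theoretically. This reduces all four statements to exhibiting, or ruling out, nonzero elements of a finite abelian group $B_0(G)$.

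The computational engine in every case is Bogomolov's formula
\[
B_0(G)=\bigcap_{A}\ker\bigl(\mathrm{res}^G_A\colon H^2(G,\bQ/\bZ)\to H^2(A,\bQ/\bZ)\bigr),
\]
where $A$ ranges over the bicyclic (two-generated abelian) subgroups of $G$. Given an explicit presentation of $G$, this becomes a finite linear-algebra problem, and for $p$-groups one may in fact restrict to a short canonical list of maximal bicyclic subgroups.

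For parts (i)--(iii) I would reproduce the original explicit constructions. Saltman's example in (i) is a specific meta-abelian quotient of a nilpotent Lie ring of class two over $\mathbb{F}_p$; one arranges coordinates so that a chosen cohomology class is manifestly nontrivial yet restricts to zero on every bicyclic subgroup. Bogomolov's smaller example in (ii) follows the same template but exploits a more refined two-step central extension. For (iii), since there are only $67$ groups of order $3^5=243$, I would enumerate them in the SmallGroups library (e.g.\ in GAP) and apply Moravec's algorithmic implementation of Bogomolov's formula; three groups then emerge as exceptional.

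Part (iv) is the substantive one. The plan is first to invoke James's $1980$ classification of groups of order $p^5$, $p\geq 5$, into the ten isoclinism families $\Phi_1,\ldots,\Phi_{10}$, and then to use the theorem (Bogomolov; see also Moravec) that $B_0$ is an isoclinism invariant. This reduces the calculation to ten representative groups. For $\Phi_1,\ldots,\Phi_9$ I expect to find enough bicyclic subgroups to trivialize $H^2(G,\bQ/\bZ)$; for the single family $\Phi_{10}$ one exhibits a cohomology class surviving every restriction. The hard part will be the exact count $1+\gcd(4,p-1)+\gcd(3,p-1)$: the family $\Phi_{10}$ contains several presentations parametrized by a structural scalar whose orbits under the natural equivalence on $(\bZ/p\bZ)^\times$ subdivide according to the presence of cube and fourth roots of unity in $\mathbb{F}_p^\times$, and the two $\gcd$ terms precisely track these orbit counts. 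Carrying out this orbit analysis in James's normal forms, and checking the small primes where the $\gcd$ values change, is where the bulk of the effort lies.
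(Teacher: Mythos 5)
The paper does not prove this theorem at all: it is stated purely as a compilation of results cited from Saltman \cite{Sal84}, Bogomolov \cite{Bog88}, Moravec \cite{Mor12} and Hoshi--Kang--Kunyavskii \cite{HKK13}, included for expository context in the survey section and never used in the proof of the main theorem. So there is no ``paper's own proof'' to compare against; what can be assessed is whether your outline matches how the cited literature actually establishes these facts, and whether it constitutes a proof on its own.

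Your reduction (b) is correct and is exactly the logic the paper alludes to in the paragraph preceding the theorem: Saltman's result that the unramified Brauer group vanishes for retract rational fields, combined with Bogomolov's group-theoretic identification of $B_0(\bC(G))$ with the intersection of kernels of restrictions to bicyclic subgroups, gives the chain $B_0(G)\neq 0\Rightarrow\bC(G)$ not retract rational $\Rightarrow$ not stably rational. Your computational strategy for (i)--(iv) likewise follows the actual route of the cited papers: explicit central-extension constructions for (i) and (ii), exhaustive machine computation over the $67$ groups of order $243$ for (iii), and for (iv) the reduction via James's classification into isoclinism families together with the isoclinism invariance of $B_0$. But as written this is a roadmap, not a proof. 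Every substantive verification is deferred: you do not exhibit the surviving cohomology class in Saltman's or Bogomolov's example, you do not carry out the vanishing argument for the families $\Phi_1,\dots,\Phi_9$, and you explicitly postpone the orbit analysis in $\Phi_{10}$ that produces the count $1+\gcd(4,p-1)+\gcd(3,p-1)$ --- which you yourself identify as where ``the bulk of the effort lies.'' Moreover, the isoclinism invariance of $B_0$ on which your reduction in (iv) rests is itself a nontrivial theorem (due to Moravec, cf.\ also Bogomolov--B\"ohning) that you would need to prove or cite precisely rather than gesture at. If the intent is to treat the theorem the way the paper does --- as a citation of established results --- your outline is a faithful summary; if the intent is to supply an independent proof, essentially all of the mathematical content is still missing.
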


The reader is referred to \cite{CHKK10, Kan12, HKK13, BB13, Kan13, Kan14} 
and the references therein 
for more recent progress about unramified Brauer groups 
and retract rationality of fields. 

\section{Proof of Theorem \ref{thmain}}

By Swan's theorem (Theorem \ref{thSwan}), 
Noether's problem for $C_p$ over $\bQ$ has a negative answer 
if the norm equation $N_{F/\bQ}(\alpha)=\pm p$ 
has no integral solution for some intermediate field 
$\bQ\subset F\subset\bQ(\zeta_{p-1})$ with $[F:\bQ]=d$. 
When $d=2$, Endo and Miyata gave the following result: 

\begin{proposition}[Endo and Miyata {\cite[Proposition 3.6]{EM73}}]\label{propEM}
Let $p$ be an odd prime satisfying one of the following conditions:\\
{\rm (i)} $p=2q+1$ where $q\equiv -1\pmod{4}$, $q$ is square-free, 
and any of $4p-q$ and $q+1$ is not square;\\
{\rm (ii)} $p=8q+1$ where $q\not\equiv -1\pmod{4}$, $q$ is square-free, 
and any of $p-q$ and $p-4q$ is not square. 
Then $\bQ(C_p)$ is not rational over $\bQ$. 
\end{proposition}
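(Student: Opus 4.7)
The plan is to invoke Swan's necessary condition for rationality (Theorem \ref{thSwan}(i)) and then descend to a carefully chosen imaginary quadratic subfield of $\bQ(\zeta_{p-1})$. Suppose for contradiction that $\bQ(C_p)$ is rational over $\bQ$; then there exists $\alpha \in \bZ[\zeta_{p-1}]$ with $N_{\bQ(\zeta_{p-1})/\bQ}(\alpha) = \pm p$. In both cases I take $F = \bQ(\sqrt{-q})$. By the conductor-discriminant formula, the conductor of $F$ is $q$ in case (i) (since $-q \equiv 1 \pmod 4$) and $4q$ in case (ii); both divide $p-1$ (which equals $2q$ or $8q$ respectively), so $F \subset \bQ(\zeta_{p-1})$. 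Setting $\beta := N_{\bQ(\zeta_{p-1})/F}(\alpha) \in \mathcal{O}_F$ and using transitivity of the norm, one has $N_{F/\bQ}(\beta) = \pm p$; since $F$ is imaginary quadratic the sign $-p$ is automatically excluded. It thus suffices to show that $N_{F/\bQ}(\beta) = p$ has no solution in $\mathcal{O}_F$.

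In case (i), $q \equiv -1 \pmod 4$ gives $\mathcal{O}_F = \bZ[(1+\sqrt{-q})/2]$. Writing $\beta = (a + b\sqrt{-q})/2$ with $a \equiv b \pmod 2$, the norm equation reads
\[
a^2 + qb^2 \;=\; 4p \;=\; 8q + 4.
\]
For $q \geq 5$, $|b| \geq 3$ is ruled out by $9q > 8q+4$, and $b = 0$ is excluded since $4p$ is not a square for prime $p$. Thus $|b| \in \{1,2\}$, forcing $4p - q$ (when $|b|=1$) or $q+1$ (when $|b|=2$) to be a perfect square; both possibilities are excluded by hypothesis. (The borderline $q = 3$ is already ruled out since then $q+1 = 4$ is a square.)

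In case (ii), $q$ square-free with $q \not\equiv -1 \pmod 4$ yields $-q \not\equiv 1 \pmod 4$ and hence $\mathcal{O}_F = \bZ[\sqrt{-q}]$. Writing $\beta = a + b\sqrt{-q}$, the equation becomes
\[
a^2 + qb^2 \;=\; p \;=\; 8q + 1.
\]
For $q \geq 2$, $|b| \geq 3$ is ruled out by $9q > 8q+1$ and $b = 0$ by primality of $p$; the remaining cases $|b| = 1, 2$ would force $p - q$ or $p - 4q$ to be a square, contradicting the hypotheses.

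The only substantive choice in the proof is $F = \bQ(\sqrt{-q})$, which is dictated by requiring the conductor to divide $p-1$; the sharp bound $|b| \leq 2$ that makes the Diophantine analysis terminate is precisely a reflection of the proximity of $p$ to $q$ (resp.\ $4q$) expressed by $p = 2q+1$ (resp.\ $p = 8q+1$). Once $F$ is fixed and the correct shape of $\mathcal{O}_F$ identified for each congruence class of $q \bmod 4$, everything else is elementary. The main obstacle is thus conceptual rather than technical: spotting that this particular imaginary quadratic descent yields a finite, hypothesis-matched list of possibilities for $b$; pitfalls are limited to handling the two shapes of $\mathcal{O}_F$ correctly and remembering that no sign ambiguity survives in an imaginary quadratic field.
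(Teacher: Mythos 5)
Your proof is correct and follows exactly the route the paper indicates (the paper itself only cites [EM73] for this proposition, but the surrounding discussion makes clear the intended argument is Swan's norm criterion applied to a quadratic subfield of $\bQ(\zeta_{p-1})$, namely $\bQ(\sqrt{-q})$). The descent to $\mathcal{O}_F$, the case split on the ring of integers according to $q \bmod 4$, and the bound $|b|\leq 2$ matching the two excluded squares are all as in Endo--Miyata's original argument, and your handling of the borderline cases $q=3$ and $b=0$ is sound.
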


By Proposition \ref{propEM} and case-by-case analysis for $d=2$ and $d=4$, 
Endo and Miyata confirmed that Noether's problem 
for $C_p$ over $\bQ$ has a negative answer for some primes $p< 2000$ 
(\cite[Appendix]{EM73}). 
We will give all primes $p< 20000$ which satisfy 
Proposition \ref{propEM} (i) (resp. (ii))  
as in Table $1$ (resp. Table $2$) in Appendix 
(Section \ref{seApp}). 
Tables $1$ and $2$ agree with our computational result 
(cf. Section \ref{seResult}).

In general, we may have to check all intermediate fields 
$\bQ\subset F\subset\bQ(\zeta_{p-1})$ with degree $2\leq d\leq \varphi(p-1)$. 
However, fortunately, it turns out that for many cases, 
we can determine the rationality of $\bQ(C_p)$ 
by some intermediate field $F$ of low degree $d\leq 8$ (see Section \ref{seResult}). 

We make an algorithm using the computer software PARI/GP \cite{PARI2} 
for general $d\mid p-1$. 
We can prove Theorem \ref{thmain} 
by the following function {\tt NP(j,\{GRH\},\{L\})} of PARI/GP 
which may determine whether Noether's problem for $C_{p_j}$ over $\bQ$ 
has a positive answer for the $j$-th prime $p_j$ 
unconditionally, i.e. without the GRH, 
if ${\tt GRH}=0$ 
(resp. under the GRH if ${\tt GRH}=1$). 

\vspace*{10mm}
{\small 
\begin{verbatim}
NP(j,GRH=0,L=[1,1])=
{
 local(p,Z,G,C,d1,d2,B,S,k);
 p=prime(j);
 Z=znstar(p-1);
 G=matdiagonal(Z[2]);
 C=[];d1=[0,0];d2=[0,0];k=0;
 forsubgroup(H=G,p-1,C=concat(C,galoissubcyclo(Z,mathnf(concat(G,H)))));
 C=Set(C);
 if(GRH==0,
  for(i=L[1],#C,B=bnfinit(C[i]);S=bnfisintnorm(B,p);
   if(S==[],k=i;d1=[poldegree(C[i]),bnfcertify(B)];break)
  );
  for(i=L[2],#C,B=bnfinit(C[i]);S=bnfisintnorm(B,-p);
   if(S==[],if(i==k,d2=[poldegree(C[i]),1],
   d2=[poldegree(C[i]),bnfcertify(B)]);break)
  );
 );
 if(GRH==1,
  for(i=L[1],#C,B=bnfinit(C[i]);S=bnfisintnorm(B,p);
   if(S==[],d1=[poldegree(C[i]),1];break)
  );
  for(i=L[2],#C,B=bnfinit(C[i]);S=bnfisintnorm(B,-p);
   if(S==[],d2=[poldegree(C[i]),1];break)
  );
 );
if([d1[2],d2[2]]==[1,1],return([d1[1],d2[1],GRH]),return([Ratioal,GRH]))
}
\end{verbatim}
}
\vspace*{10mm}

{\tt NP(j,\{GRH\},\{L\})} returns the list $[d_+,d_-,{\tt GRH}]$ for the 
$j$-th prime $p_j$ and $L=\{l_+,l_-\}$ 
without the GRH if ${\tt GRH}=0$ 
(resp. under the GRH if ${\tt GRH}=1$) 
where 
$d_\pm=[K_{\pm,i}:\bQ]$ 
if the norm equation $N_{K_{\pm,i}/\bQ}(\alpha)=\pm p_j$ 
has no integral solution for some $i$-th subfield 
$\bQ\subset K_{\pm,i}\subset\bQ(\zeta_{p_j-1})$ with 
$i\geq l_\pm$, $d_\pm ={\tt Rational}$ if the norm equation 
$N_{\bQ(\zeta_{p_j-1})/\bQ}(\alpha)=\pm p_j$ 
has an integral solution. 
The second and third inputs ${\tt \{GRH\}}$, ${\tt \{L\}}$ 
may be omitted. 
If they are omitted, the function {\tt NP} runs as 
${\tt GRH}=0$ and ${\tt L}=[1,1]$, 
namely it works without the GRH and for all subfields 
$\bQ\subset K_{\pm,i}\subset\bQ(\zeta_{p_j-1})$ respectively.
\newpage

We further define the set of primes:
\begin{align*}
S_0=\{&5987, 7577, 9497, 9533, 10457, 10937, 11443, 11897, 11923, 12197,\\
&12269, 13037, 13219, 13337, 13997, 14083, 15077, 15683, 15773, 16217,\\
&16229, 16889, 17123, 17573, 17657, 17669, 17789, 17827, 18077, 18413,\\
&18713, 18979, 19139, 19219, 19447, 19507, 19577, 19843, 19973, 19997\},\\
S_1=\{&11699, 12659, 12899, 13043, 14243, 14723, 17939, 19379\}\subset X,\\
T_0=\{&197, 227, 491, 1373, 1523, 1619, 1783, 2099, 2579, 2963, 
5507,\\ 
&5939, 6563, 6899, 
7187, 
7877, 
14561, 
18041, 18097, 19603\},\\
T_1=\{&8837\}\subset X
\end{align*}
with $\# S_0=40$, $\#S_1=8$, $\# T_0=20$, $\# T_1=1$. 

We split the proof of Theorem \ref{thmain} ($p<20000)$ into three parts:\\
(i)  $p\in S_0\cup S_1$;\\
(ii) $p\in T_0\cup T_1$;\\
(iii) $p\not\in U\cup S_0\cup S_1\cup T_0\cup T_1$. 

We will treat the cases (i), (ii), (iii) 
in Subsections \ref{ss31}, \ref{ss32}, \ref{ss33} respectively.

\subsection{Case $p\in S_0\cup S_1$}\label{ss31}
~\\

When $p_j\in S_0\cup S_1$, 
we should take a suitable list ${\tt L}$ for 
the function {\tt NP(j,GRH,L)}. 
For $p_j\in S_0$ (resp. $p_j\in S_1$), 
we may take the following ${\tt L}$ in $L_0$ (resp. $L_1$) respectively: 
\vspace*{3mm}

{\small 
\begin{verbatim}
L0=[[20,19],[1,3],[1,3],[9,1],[1,3],[1,3],[1,3],[1,3],[1,3],[3,1],
    [1,3],[9,3],[1,3],[1,3],[1,3],[1,3],[10,1],[4,1],[8,3],[1,3],
    [3,1],[1,3],[1,3],[1,3],[1,3],[1,3],[9,3],[1,3],[9,3],[9,3],
    [1,3],[1,3],[1,3],[1,3],[1,3],[1,3],[1,3],[1,3],[3,1],[9,3]];
L1=[[3,1],[3,1],[1,3],[1,3],[1,3],[41,1],[4,1],[3,1]];
\end{verbatim}
}
\vspace*{3mm}

Let $S_{0,j}$ (resp. $S_{1,j}$) be the index set $\{j\}$ of the 
set $S_0=\{p_j\}$ (resp. $S_1$). 
\vspace*{3mm}

{\small
\begin{verbatim}
S0j=[783,962,1177,1180,1279,1328,1380,1425,1428,1458,
     1467,1553,1572,1584,1651,1661,1761,1831,1840,1884,
     1886,1948,1974,2020,2028,2030,2041,2044,2072,2109,
     2136,2158,2171,2180,2205,2214,2221,2245,2258,2262];
S1j=[1404,1513,1535,1554,1673,1723,2057,2193];
\end{verbatim}
}
\vspace*{3mm}

For example, we take $p_j=5987\in S_0$ with $j=783$. 
Then {\tt NP(783,0)} does not work well 
in a reasonable time. 
However, {\tt NP(783,0,[20,19])} 
returns an answer 
in a few seconds.

\vspace*{3mm}
{\small
\begin{verbatim}
gp > NP(783,0,[20,19])
[ 8, 8, 0]
\end{verbatim}
}
\vspace*{3mm}

Namely, 
the norm equation $N_{K_{+,i}/\bQ}(\alpha)=p_j$ 
has no integral solution for some $i$-th subfield 
$\bQ\subset K_{+,i}\subset\bQ(\zeta_{p_j-1})$ with 
$i\geq 20$ and $[K_{+,i}:\bQ]=8$, 
and $N_{K_{-,i}/\bQ}(\alpha)=-p_j$ 
has no integral solution for some $i$-th subfield 
$\bQ\subset K_{-,i}\subset\bQ(\zeta_{p_j-1})$ with 
$i\geq 19$ and $[K_{-,i}:\bQ]=8$.

We can confirm Theorem \ref{thmain} 
for $p_j\in S_0$ (resp. $p_j\in S_1)$ unconditionally, i.e. without the GRH, 
(resp. under the GRH) using {\tt NP(j,GRH,L)} with ${\tt GRH}=0$ 
(resp. ${\tt GRH}=1$) as follows. 
\vspace*{3mm}

{\small
\begin{verbatim}
gp > #
   timer = 1 (on)
gp > allocatemem(2^4*10^8)
  ***   Warning: new stack size = 1600000000 (1525.879 Mbytes).
gp > for(m=1,#L0,print([[S0j[m],prime(S0j[m])],NP(S0j[m],0,L0[m])]))
\end{verbatim}
}
\vspace*{-3mm}
{\tiny
\begin{multicols}{3}
\begin{verbatim}
[[783, 5987], [8, 8, 0]]
[[962, 7577], [2, 2, 0]]
[[1177, 9497], [2, 2, 0]]
[[1180, 9533], [6, 2, 0]]
[[1279, 10457], [2, 2, 0]]
[[1328, 10937], [2, 2, 0]]
[[1380, 11443], [2, 2, 0]]
[[1425, 11897], [2, 2, 0]]
[[1428, 11923], [2, 2, 0]]
[[1458, 12197], [4, 2, 0]]
[[1467, 12269], [6, 2, 0]]
[[1553, 13037], [6, 2, 0]]
[[1572, 13219], [2, 2, 0]]
[[1584, 13337], [2, 2, 0]]
[[1651, 13997], [6, 2, 0]]
[[1661, 14083], [2, 2, 0]]
[[1761, 15077], [6, 2, 0]]
[[1831, 15683], [5, 5, 0]]
[[1840, 15773], [6, 2, 0]]
[[1884, 16217], [2, 2, 0]]
[[1886, 16229], [4, 2, 0]]
[[1948, 16889], [2, 2, 0]]
[[1974, 17123], [2, 2, 0]]
[[2020, 17573], [2, 2, 0]]
[[2028, 17657], [2, 2, 0]]
[[2030, 17669], [2, 2, 0]]
[[2041, 17789], [6, 2, 0]]
[[2044, 17827], [2, 2, 0]]
[[2072, 18077], [6, 2, 0]]
[[2109, 18413], [6, 2, 0]]
[[2136, 18713], [2, 2, 0]]
[[2158, 18979], [2, 2, 0]]
[[2171, 19139], [2, 2, 0]]
[[2180, 19219], [2, 2, 0]]
[[2205, 19447], [2, 2, 0]]
[[2214, 19507], [2, 2, 0]]
[[2221, 19577], [2, 2, 0]]
[[2245, 19843], [2, 2, 0]]
[[2258, 19973], [4, 2, 0]]
[[2262, 19997], [6, 2, 0]]
time = 2h, 6min, 4,969 ms.
\end{verbatim}
\end{multicols}\vspace*{-3mm}
}
\begin{verbatim}
gp > for(m=1,#L1,print([[S1j[m],prime(S1j[m])],NP(S1j[m],1,L1[m])]))
\end{verbatim}\vspace*{-3mm}
{\tiny
\begin{multicols}{3}
\begin{verbatim}
[[1404, 11699], [8, 8, 1]]
[[1513, 12659], [8, 8, 1]]
[[1535, 12899], [16, 16, 1]]
[[1554, 13043], [8, 8, 1]]
[[1673, 14243], [16, 16, 1]]
[[1723, 14723], [16, 2, 1]]
[[2057, 17939], [8, 8, 1]]
[[2193, 19379], [8, 8, 1]]
time = 1h, 27min, 21,610 ms.
\end{verbatim}
\end{multicols}
}

The computations of this paper were done 
on a machine with Intel Xeon E5-2687W (3.10 GHz, 64 GB RAM, Windows).
\vspace*{3mm}

\subsection{Case $p\in T_0\cup T_1$}\label{ss32}
~\\

When $p_j\in T_0\cup T_1$, because the computation of 
{\tt NP(j,GRH)} may take more time and memory resources, 
we will do that by case-by-case analysis. 
We can confirm Theorem \ref{thmain} 
for $p_j\in T_0$ (resp. $p_j\in T_1)$ unconditionally 
(resp. under the GRH) using {\tt NP(j,GRH)} with ${\tt GRH}=0$ 
(resp. ${\tt GRH}=1$) as follows. 
In particular, 
for two primes $p_j=5507$ with $j=728$ and $p_j=7187$ with $j=918$, 
it takes about 55 days and 45 days respectively in our computation.

\vspace*{3mm}
{\tiny 
\begin{multicols}{3}
\begin{verbatim}
gp > #
   timer = 1 (on)
gp > allocatemem(2^4*10^8)
  ***   Warning: new stack size = 
1600000000 (1525.879 Mbytes).
gp > j=45;[[j,prime(j)],NP(j,0)]
time = 37min, 41,598 ms. 
[[45, 197], [14, 2, 0]]
gp > j=49;[[j,prime(j)],NP(j,0)]
time = 210h, 36min, 48,779 ms.
[[49, 227], [16, 16, 0]]
gp > j=94;[[j,prime(j)],NP(j,0)]
time = 2h, 33min, 16.867 ms.
[[94, 491], [14, 2, 0]]
gp > j=220;[[j,prime(j)],NP(j,0)]
time = 37min, 52,659 ms.
[[220, 1373], [14, 2, 0]]
gp > j=241;[[j,prime(j)],NP(j,0)]
time = 3h, 17min, 52,181 ms.
[[241, 1523], [8, 8, 0]]
gp > j=256;[[j,prime(j)],NP(j,0)]
time = 2h, 39min, 35,993 ms.
[[256, 1619], [8, 8, 0]]
gp > j=276;[[j,prime(j)],NP(j,0)]
time = 8h, 38min, 13,006 ms.
[[276, 1783], [18, 2, 0]]
gp > j=317;[[j,prime(j)],NP(j,0)]
time = 7h, 37min, 3,212 ms.
[[317, 2099], [8, 8, 0]]
gp > j=376;[[j,prime(j)],NP(j,0)]
time = 3h, 44min, 13,917 ms.
[[376, 2579], [8, 8, 0]]
gp > j=427;[[j,prime(j)],NP(j,0)]
time = 9h, 39min, 45,264 ms.
[[427, 2963], [8, 8, 0]]
gp > j=780;[[j,prime(j)],NP(j,0)]
 *** bnfcertify: Warning: Zimmert's 
bound is large (2500735916), 
certification will take a long time.
time = 68h, 28min, 31,106 ms.
[[780, 5939], [8, 8, 0]]
gp > j=848;[[j,prime(j)],NP(j,0)]
time = 183h, 47min, 42,355 ms. 
[[848, 6563], [12, 12, 0]]
gp > j=887;[[j,prime(j)],NP(j,0)]
 *** bnfcertify: Warning: Zimmert's 
bound is large (4225307497), 
certification will take a long time.
time = 160h, 57min, 42,981 ms.
[[887, 6899], [8, 8, 0]]
gp > j=995;[[j,prime(j)],NP(j,0)]
time = 57min, 20,134 ms.
[[995, 7877], [10, 2, 0]]
gp > j=1707;[[j,prime(j)],NP(j,0)]
time = 1h, 40min, 17,426 ms.
[[1707, 14561], [2, 2, 0]]
gp > j=2066;[[j,prime(j)],NP(j,0)]
time = 44min, 33,330 ms.
[[2066, 18041], [2, 2, 0]]
gp > j=2074;[[j,prime(j)],NP(j,0)]
time = 35min, 5,584 ms.
[[2074, 18097], [2, 2, 0]]
gp > j=2224;[[j,prime(j)],NP(j,0)]
time = 8h, 24min, 24,406 ms.
[[2224, 19603], [18, 2, 0]]

gp > j=728;[[j,prime(j)],NP(j,0)]
[[728, 5507], [8, 8, 0]]
gp > j=918;[[j,prime(j)],NP(j,0)]
 *** bnfcertify: Warning: Zimmert's 
bound is large (4875648631), 
certification will take a long time.
[[918, 7187], [8, 8, 0]]

gp > j=1101;[[j,prime(j)],NP(j,1)]
time = 4h, 16,841 ms.
[[1101, 8837], [46, 2, 1]]
\end{verbatim}
\end{multicols}
}

\subsection{Case $p\not\in U\cup S_0\cup S_1\cup T_0\cup T_1$}\label{ss33}
~\\

When $p_j\not\in U\cup S_0\cup S_1\cup T_0\cup T_1$, 
we just apply the function {\tt NP(j,GRH)}. 

Let $U_j$ (resp. $X_j$, $T_{0,j}$, $T_{1,j}$) 
be the index set $\{j\}$ of $U=\{p_j\}$ 
(resp. $X$, $T_{0}$, $T_{1}$).

\vspace*{3mm}
{\small 
\begin{verbatim}
Uj=[54,69,107,364,410,463,616,643,858,1302,
    1461,1676,1787,1963,2031,2070,2117,2155];
Xj=[17,23,28,38,93,123,129,195,232,386,526,584,
    953,1101,1323,1404,1513,1535,1554,1569,1602,
    1673,1685,1723,1741,1915,2057,2193];
T0j=[45,49,94,220,241,256,276,317,376,427,728,
     780,848,887,918,995,1707,2066,2074,2224];
T1j=[1101];
\end{verbatim}
}
\vspace*{3mm}

Then we can confirm Theorem \ref{thmain} for 
$p_j\not\in U\cup S_0\cup S_1\cup T_0\cup T_1$ 
unconditionally (resp. under the GRH) when $p_j\not\in X$ (resp. $p_j\in X$) 
using {\tt NP(j,GRH)} with ${\tt GRH}=0$ (resp. ${\tt GRH}=1$). 
The following function {\tt NPs(s,e)} performs it for each $p_s\leq p_j\leq p_e$ 
and it returns {\tt "?"} (resp. {\tt "S0"}, {\tt "S1"}, 
{\tt "T0"}, {\tt "T1"}) when $p_j\in U$ (resp. $S_0$, $S_1$, $T_0$, $T_1$). 

\vspace*{3mm}
{\small 
\begin{verbatim}
NPs(s,e)=
{
for(j=s,e,
if(setsearch(Uj,j),print([[j,prime(j)],"?"]),
 if(setsearch(S0j,j),print([[j,prime(j)],"S0"]),
  if(setsearch(S1j,j),print([[j,prime(j)],"S1"]),
   if(setsearch(T0j,j),print([[j,prime(j)],"T0"]),
    if(setsearch(T1j,j),print([[j,prime(j)],"T1"]),
     if(setsearch(Xj,j),print([[j,prime(j)],NP(j,1)]),
      print([[j,prime(j)],NP(j,0)]))))))
);
}
\end{verbatim}
}
\vspace*{3mm}

We postpone displaying the result of {\tt NP(j,GRH)} for primes 
$p_j< 20000$ $(j\leq 2262)$ in PARI/GP to Section \ref{seResult}. 
The result of {\tt NP(j,GRH)} for\\
{\rm (i)} $1\leq j\leq 500$ ($2\leq p_j\leq 3571)$;\\
{\rm (ii)} $501\leq j\leq 1000$ ($3581\leq p_j\leq 7919)$;\\
{\rm (iii)} $1001\leq j\leq 1500$ ($7927\leq p_j\leq 12533)$;\\
{\rm (iv)} $1501\leq j\leq 2000$ ($12569\leq p_j\leq 17389)$;\\
{\rm (v)} $2001\leq j\leq 2262$ ($17393\leq p_j\leq 19997)$\\
will be given in Section \ref{seResult} respectively.

\begin{proof}[Proof of Theorem \ref{thmain}]
Let $p<20000$ be a prime. 
Theorem \ref{thmain} follows from the result in Subsection \ref{ss31} 
(resp.  Subsection \ref{ss32},  Subsection \ref{ss33}) 
for $p\in S_0\cup S_1$ (resp. $p\in T_0\cup T_1$, 
$p\not\in U\cup S_0\cup S_1\cup T_0\cup T_1$). 
\end{proof}


\begin{aremark}
From the view point of Theorems \ref{thSwan} and  \ref{thVos}, 
Noether's problem for $C_p$ over $\bQ$ is closely related to 
Weber's class number problem (see e.g. Fukuda and Komtsu 
\cite{FK09}, \cite{FK10}, \cite{FK11}). 
Actually, after this paper was posted on the arXiv, 
Fukuda \cite{Fuk14} announced to the author 
that he proved the non-rationality of $\bQ(C_{59})$ over $\bQ$ 
without the GRH. 
Independently, Lawrence C. Washington pointed out to 
John C. Miller that his methods for finding principal ideals 
of real cyclotomic fields in \cite{Mil1}, \cite{Mil2} 
may be valid for $\bQ(\zeta_{p-1})$ at least some small primes $p$. 
Indeed, Miller \cite{Mil14} proved that 
$\bQ(C_p)$ is not rational over $\bQ$ 
for $p=59$ (resp. $251$) 
without the GRH (resp. under the GRH) 
by using a similar technique as in \cite{Mil1}, \cite{Mil2}. 
It should be interesting how to improve the methods 
of Fukuda and Miller for higher primes $p$. 
\end{aremark}

\newpage

\section{Rsult of {\tt NP(j,GRH)} for $p_j<20000$ $(1\leq j\leq 2262)$}
\label{seResult}

{\tiny
\begin{multicols}{4}
\begin{verbatim}
gp > #
   timer = 1 (on)
gp > allocatemem(2^4*10^8)
  ***   Warning: new 
stack size = 1600000000 
(1525.879 Mbytes).
gp > NPs(1,500)
[[1, 2], [Ratioal, 0]]
[[2, 3], [Ratioal, 0]]
[[3, 5], [Ratioal, 0]]
[[4, 7], [Ratioal, 0]]
[[5, 11], [Ratioal, 0]]
[[6, 13], [Ratioal, 0]]
[[7, 17], [Ratioal, 0]]
[[8, 19], [Ratioal, 0]]
[[9, 23], [Ratioal, 0]]
[[10, 29], [Ratioal, 0]]
[[11, 31], [Ratioal, 0]]
[[12, 37], [Ratioal, 0]]
[[13, 41], [Ratioal, 0]]
[[14, 43], [Ratioal, 0]]
[[15, 47], [2, 2, 0]]
[[16, 53], [6, 2, 0]]
[[17, 59], [28, 4, 1]]
[[18, 61], [Ratioal, 0]]
[[19, 67], [Ratioal, 0]]
[[20, 71], [Ratioal, 0]]
[[21, 73], [6, 2, 0]]
[[22, 79], [2, 2, 0]]
[[23, 83], [40, 8, 1]]
[[24, 89], [10, 2, 0]]
[[25, 97], [8, 2, 0]]
[[26, 101], [10, 2, 0]]
[[27, 103], [4, 2, 0]]
[[28, 107], [52, 4, 1]]
[[29, 109], [18, 2, 0]]
[[30, 113], [2, 2, 0]]
[[31, 127], [6, 2, 0]]
[[32, 131], [12, 4, 0]]
[[33, 137], [2, 2, 0]]
[[34, 139], [2, 2, 0]]
[[35, 149], [6, 2, 0]]
[[36, 151], [10, 2, 0]]
[[37, 157], [6, 2, 0]]
[[38, 163], [54, 2, 1]]
[[39, 167], [2, 2, 0]]
[[40, 173], [6, 2, 0]]
[[41, 179], [8, 8, 0]]
[[42, 181], [6, 2, 0]]
[[43, 191], [2, 2, 0]]
[[44, 193], [8, 2, 0]]
[[45, 197], "T0"]
[[46, 199], [6, 2, 0]]
[[47, 211], [12, 2, 0]]
[[48, 223], [2, 2, 0]]
[[49, 227], "T0"]
[[50, 229], [6, 2, 0]]
[[51, 233], [2, 2, 0]]
[[52, 239], [2, 2, 0]]
[[53, 241], [4, 2, 0]]
[[54, 251], "?"]
[[55, 257], [16, 2, 0]]
[[56, 263], [2, 2, 0]]
[[57, 269], [6, 2, 0]]
[[58, 271], [18, 2, 0]]
[[59, 277], [2, 2, 0]]
[[60, 281], [4, 2, 0]]
[[61, 283], [2, 2, 0]]
[[62, 293], [4, 2, 0]]
[[63, 307], [4, 2, 0]]
[[64, 311], [2, 2, 0]]
[[65, 313], [2, 2, 0]]
[[66, 317], [2, 2, 0]]
[[67, 331], [2, 2, 0]]
[[68, 337], [2, 2, 0]]
[[69, 347], "?"]
[[70, 349], [2, 2, 0]]
[[71, 353], [4, 2, 0]]
[[72, 359], [2, 2, 0]]
[[73, 367], [2, 2, 0]]
[[74, 373], [2, 2, 0]]
[[75, 379], [6, 2, 0]]
[[76, 383], [2, 2, 0]]
[[77, 389], [4, 2, 0]]
[[78, 397], [6, 2, 0]]
[[79, 401], [4, 2, 0]]
[[80, 409], [2, 2, 0]]
[[81, 419], [6, 2, 0]]
[[82, 421], [4, 2, 0]]
[[83, 431], [2, 2, 0]]
[[84, 433], [6, 2, 0]]
[[85, 439], [2, 2, 0]]
[[86, 443], [2, 4, 0]]
[[87, 449], [8, 2, 0]]
[[88, 457], [2, 2, 0]]
[[89, 461], [2, 2, 0]]
[[90, 463], [2, 2, 0]]
[[91, 467], [8, 8, 0]]
[[92, 479], [2, 2, 0]]
[[93, 487], [54, 2, 1]]
[[94, 491], "T0"]
[[95, 499], [2, 2, 0]]
[[96, 503], [2, 2, 0]]
[[97, 509], [6, 2, 0]]
[[98, 521], [2, 2, 0]]
[[99, 523], [2, 2, 0]]
[[100, 541], [6, 2, 0]]
[[101, 547], [6, 2, 0]]
[[102, 557], [6, 2, 0]]
[[103, 563], [8, 8, 0]]
[[104, 569], [2, 2, 0]]
[[105, 571], [2, 2, 0]]
[[106, 577], [6, 2, 0]]
[[107, 587], "?"]
[[108, 593], [2, 2, 0]]
[[109, 599], [2, 2, 0]]
[[110, 601], [10, 2, 0]]
[[111, 607], [2, 2, 0]]
[[112, 613], [4, 2, 0]]
[[113, 617], [2, 2, 0]]
[[114, 619], [2, 2, 0]]
[[115, 631], [6, 2, 0]]
[[116, 641], [4, 2, 0]]
[[117, 643], [2, 2, 0]]
[[118, 647], [4, 2, 0]]
[[119, 653], [6, 2, 0]]
[[120, 659], [2, 2, 0]]
[[121, 661], [4, 2, 0]]
[[122, 673], [4, 2, 0]]
[[123, 677], [26, 2, 1]]
[[124, 683], [2, 2, 0]]
[[125, 691], [4, 2, 0]]
[[126, 701], [6, 2, 0]]
[[127, 709], [2, 2, 0]]
[[128, 719], [2, 2, 0]]
[[129, 727], [22, 2, 1]]
[[130, 733], [2, 2, 0]]
[[131, 739], [4, 2, 0]]
[[132, 743], [6, 2, 0]]
[[133, 751], [10, 2, 0]]
[[134, 757], [6, 2, 0]]
[[135, 761], [2, 2, 0]]
[[136, 769], [8, 2, 0]]
[[137, 773], [4, 2, 0]]
[[138, 787], [2, 2, 0]]
[[139, 797], [6, 2, 0]]
[[140, 809], [2, 2, 0]]
[[141, 811], [18, 2, 0]]
[[142, 821], [2, 2, 0]]
[[143, 823], [2, 2, 0]]
[[144, 827], [6, 2, 0]]
[[145, 829], [2, 2, 0]]
[[146, 839], [2, 2, 0]]
[[147, 853], [2, 2, 0]]
[[148, 857], [2, 2, 0]]
[[149, 859], [2, 2, 0]]
[[150, 863], [2, 2, 0]]
[[151, 877], [2, 2, 0]]
[[152, 881], [4, 2, 0]]
[[153, 883], [6, 2, 0]]
[[154, 887], [2, 2, 0]]
[[155, 907], [2, 2, 0]]
[[156, 911], [2, 2, 0]]
[[157, 919], [4, 2, 0]]
[[158, 929], [4, 2, 0]]
[[159, 937], [2, 2, 0]]
[[160, 941], [2, 2, 0]]
[[161, 947], [2, 2, 0]]
[[162, 953], [2, 2, 0]]
[[163, 967], [2, 2, 0]]
[[164, 971], [4, 4, 0]]
[[165, 977], [2, 2, 0]]
[[166, 983], [2, 2, 0]]
[[167, 991], [2, 2, 0]]
[[168, 997], [2, 2, 0]]
[[169, 1009], [2, 2, 0]]
[[170, 1013], [2, 2, 0]]
[[171, 1019], [4, 4, 0]]
[[172, 1021], [2, 2, 0]]
[[173, 1031], [2, 2, 0]]
[[174, 1033], [2, 2, 0]]
[[175, 1039], [2, 2, 0]]
[[176, 1049], [2, 2, 0]]
[[177, 1051], [10, 2, 0]]
[[178, 1061], [2, 2, 0]]
[[179, 1063], [6, 2, 0]]
[[180, 1069], [2, 2, 0]]
[[181, 1087], [2, 2, 0]]
[[182, 1091], [2, 4, 0]]
[[183, 1093], [2, 2, 0]]
[[184, 1097], [2, 2, 0]]
[[185, 1103], [2, 2, 0]]
[[186, 1109], [3, 2, 0]]
[[187, 1117], [6, 2, 0]]
[[188, 1123], [4, 2, 0]]
[[189, 1129], [2, 2, 0]]
[[190, 1151], [10, 2, 0]]
[[191, 1153], [6, 2, 0]]
[[192, 1163], [2, 2, 0]]
[[193, 1171], [4, 2, 0]]
[[194, 1181], [4, 2, 0]]
[[195, 1187], [16, 16, 1]]
[[196, 1193], [2, 2, 0]]
[[197, 1201], [4, 2, 0]]
[[198, 1213], [2, 2, 0]]
[[199, 1217], [2, 2, 0]]
[[200, 1223], [2, 2, 0]]
[[201, 1229], [6, 2, 0]]
[[202, 1231], [2, 2, 0]]
[[203, 1237], [2, 2, 0]]
[[204, 1249], [2, 2, 0]]
[[205, 1259], [4, 4, 0]]
[[206, 1277], [2, 2, 0]]
[[207, 1279], [2, 2, 0]]
[[208, 1283], [4, 4, 0]]
[[209, 1289], [2, 2, 0]]
[[210, 1291], [2, 2, 0]]
[[211, 1297], [6, 2, 0]]
[[212, 1301], [2, 2, 0]]
[[213, 1303], [2, 2, 0]]
[[214, 1307], [4, 4, 0]]
[[215, 1319], [2, 2, 0]]
[[216, 1321], [2, 2, 0]]
[[217, 1327], [2, 2, 0]]
[[218, 1361], [2, 2, 0]]
[[219, 1367], [2, 2, 0]]
[[220, 1373], "T0"]
[[221, 1381], [2, 2, 0]]
[[222, 1399], [2, 2, 0]]
[[223, 1409], [8, 2, 0]]
[[224, 1423], [2, 2, 0]]
[[225, 1427], [2, 2, 0]]
[[226, 1429], [2, 2, 0]]
[[227, 1433], [2, 2, 0]]
[[228, 1439], [2, 2, 0]]
[[229, 1447], [2, 2, 0]]
[[230, 1451], [2, 2, 0]]
[[231, 1453], [10, 2, 0]]
[[232, 1459], [54, 2, 1]]
[[233, 1471], [12, 2, 0]]
[[234, 1481], [2, 2, 0]]
[[235, 1483], [2, 2, 0]]
[[236, 1487], [2, 2, 0]]
[[237, 1489], [2, 2, 0]]
[[238, 1493], [4, 2, 0]]
[[239, 1499], [2, 2, 0]]
[[240, 1511], [2, 2, 0]]
[[241, 1523], "T0"]
[[242, 1531], [2, 2, 0]]
[[243, 1543], [2, 2, 0]]
[[244, 1549], [2, 2, 0]]
[[245, 1553], [2, 2, 0]]
[[246, 1559], [2, 2, 0]]
[[247, 1567], [6, 2, 0]]
[[248, 1571], [2, 2, 0]]
[[249, 1579], [2, 2, 0]]
[[250, 1583], [2, 2, 0]]
[[251, 1597], [2, 2, 0]]
[[252, 1601], [4, 2, 0]]
[[253, 1607], [4, 2, 0]]
[[254, 1609], [2, 2, 0]]
[[255, 1613], [2, 2, 0]]
[[256, 1619], "T0"]
[[257, 1621], [12, 2, 0]]
[[258, 1627], [2, 2, 0]]
[[259, 1637], [4, 2, 0]]
[[260, 1657], [2, 2, 0]]
[[261, 1663], [2, 2, 0]]
[[262, 1667], [2, 2, 0]]
[[263, 1669], [2, 2, 0]]
[[264, 1693], [2, 2, 0]]
[[265, 1697], [2, 2, 0]]
[[266, 1699], [2, 2, 0]]
[[267, 1709], [2, 2, 0]]
[[268, 1721], [2, 2, 0]]
[[269, 1723], [2, 2, 0]]
[[270, 1733], [4, 2, 0]]
[[271, 1741], [2, 2, 0]]
[[272, 1747], [2, 2, 0]]
[[273, 1753], [2, 2, 0]]
[[274, 1759], [2, 2, 0]]
[[275, 1777], [2, 2, 0]]
[[276, 1783], "T0"]
[[277, 1787], [2, 2, 0]]
[[278, 1789], [2, 2, 0]]
[[279, 1801], [4, 2, 0]]
[[280, 1811], [2, 2, 0]]
[[281, 1823], [2, 2, 0]]
[[282, 1831], [2, 2, 0]]
[[283, 1847], [2, 2, 0]]
[[284, 1861], [2, 2, 0]]
[[285, 1867], [2, 2, 0]]
[[286, 1871], [2, 2, 0]]
[[287, 1873], [2, 2, 0]]
[[288, 1877], [2, 2, 0]]
[[289, 1879], [2, 2, 0]]
[[290, 1889], [2, 2, 0]]
[[291, 1901], [6, 2, 0]]
[[292, 1907], [7, 7, 0]]
[[293, 1913], [2, 2, 0]]
[[294, 1931], [4, 4, 0]]
[[295, 1933], [2, 2, 0]]
[[296, 1949], [6, 2, 0]]
[[297, 1951], [4, 2, 0]]
[[298, 1973], [2, 2, 0]]
[[299, 1979], [2, 2, 0]]
[[300, 1987], [2, 2, 0]]
[[301, 1993], [2, 2, 0]]
[[302, 1997], [2, 2, 0]]
[[303, 1999], [2, 2, 0]]
[[304, 2003], [2, 2, 0]]
[[305, 2011], [2, 2, 0]]
[[306, 2017], [4, 2, 0]]
[[307, 2027], [4, 4, 0]]
[[308, 2029], [6, 2, 0]]
[[309, 2039], [2, 2, 0]]
[[310, 2053], [6, 2, 0]]
[[311, 2063], [2, 2, 0]]
[[312, 2069], [2, 2, 0]]
[[313, 2081], [2, 2, 0]]
[[314, 2083], [2, 2, 0]]
[[315, 2087], [2, 2, 0]]
[[316, 2089], [2, 2, 0]]
[[317, 2099], "T0"]
[[318, 2111], [2, 2, 0]]
[[319, 2113], [4, 2, 0]]
[[320, 2129], [2, 2, 0]]
[[321, 2131], [2, 2, 0]]
[[322, 2137], [2, 2, 0]]
[[323, 2141], [2, 2, 0]]
[[324, 2143], [2, 2, 0]]
[[325, 2153], [2, 2, 0]]
[[326, 2161], [4, 2, 0]]
[[327, 2179], [6, 2, 0]]
[[328, 2203], [2, 2, 0]]
[[329, 2207], [2, 2, 0]]
[[330, 2213], [2, 2, 0]]
[[331, 2221], [2, 2, 0]]
[[332, 2237], [4, 2, 0]]
[[333, 2239], [2, 2, 0]]
[[334, 2243], [2, 2, 0]]
[[335, 2251], [10, 2, 0]]
[[336, 2267], [2, 2, 0]]
[[337, 2269], [6, 2, 0]]
[[338, 2273], [2, 2, 0]]
[[339, 2281], [2, 2, 0]]
[[340, 2287], [2, 2, 0]]
[[341, 2293], [2, 2, 0]]
[[342, 2297], [2, 2, 0]]
[[343, 2309], [2, 2, 0]]
[[344, 2311], [2, 2, 0]]
[[345, 2333], [2, 2, 0]]
[[346, 2339], [2, 2, 0]]
[[347, 2341], [2, 2, 0]]
[[348, 2347], [2, 2, 0]]
[[349, 2351], [10, 2, 0]]
[[350, 2357], [2, 2, 0]]
[[351, 2371], [2, 2, 0]]
[[352, 2377], [4, 2, 0]]
[[353, 2381], [2, 2, 0]]
[[354, 2383], [2, 2, 0]]
[[355, 2389], [2, 2, 0]]
[[356, 2393], [2, 2, 0]]
[[357, 2399], [2, 2, 0]]
[[358, 2411], [2, 4, 0]]
[[359, 2417], [2, 2, 0]]
[[360, 2423], [2, 2, 0]]
[[361, 2437], [2, 2, 0]]
[[362, 2441], [2, 2, 0]]
[[363, 2447], [2, 2, 0]]
[[364, 2459], "?"]
[[365, 2467], [2, 2, 0]]
[[366, 2473], [2, 2, 0]]
[[367, 2477], [6, 2, 0]]
[[368, 2503], [2, 2, 0]]
[[369, 2521], [2, 2, 0]]
[[370, 2531], [2, 2, 0]]
[[371, 2539], [6, 2, 0]]
[[372, 2543], [2, 2, 0]]
[[373, 2549], [4, 2, 0]]
[[374, 2551], [4, 2, 0]]
[[375, 2557], [2, 2, 0]]
[[376, 2579], "T0"]
[[377, 2591], [2, 2, 0]]
[[378, 2593], [8, 2, 0]]
[[379, 2609], [2, 2, 0]]
[[380, 2617], [2, 2, 0]]
[[381, 2621], [2, 2, 0]]
[[382, 2633], [2, 2, 0]]
[[383, 2647], [6, 2, 0]]
[[384, 2657], [4, 2, 0]]
[[385, 2659], [2, 2, 0]]
[[386, 2663], [22, 2, 1]]
[[387, 2671], [2, 2, 0]]
[[388, 2677], [2, 2, 0]]
[[389, 2683], [2, 2, 0]]
[[390, 2687], [2, 2, 0]]
[[391, 2689], [2, 2, 0]]
[[392, 2693], [4, 2, 0]]
[[393, 2699], [2, 2, 0]]
[[394, 2707], [2, 2, 0]]
[[395, 2711], [2, 2, 0]]
[[396, 2713], [2, 2, 0]]
[[397, 2719], [2, 2, 0]]
[[398, 2729], [2, 2, 0]]
[[399, 2731], [2, 2, 0]]
[[400, 2741], [2, 2, 0]]
[[401, 2749], [2, 2, 0]]
[[402, 2753], [2, 2, 0]]
[[403, 2767], [2, 2, 0]]
[[404, 2777], [2, 2, 0]]
[[405, 2789], [2, 2, 0]]
[[406, 2791], [2, 2, 0]]
[[407, 2797], [2, 2, 0]]
[[408, 2801], [2, 2, 0]]
[[409, 2803], [2, 2, 0]]
[[410, 2819], "?"]
[[411, 2833], [2, 2, 0]]
[[412, 2837], [4, 2, 0]]
[[413, 2843], [2, 2, 0]]
[[414, 2851], [2, 2, 0]]
[[415, 2857], [2, 2, 0]]
[[416, 2861], [2, 2, 0]]
[[417, 2879], [2, 2, 0]]
[[418, 2887], [2, 2, 0]]
[[419, 2897], [2, 2, 0]]
[[420, 2903], [2, 2, 0]]
[[421, 2909], [2, 2, 0]]
[[422, 2917], [18, 2, 0]]
[[423, 2927], [2, 2, 0]]
[[424, 2939], [4, 2, 0]]
[[425, 2953], [2, 2, 0]]
[[426, 2957], [6, 2, 0]]
[[427, 2963], "T0"]
[[428, 2969], [2, 2, 0]]
[[429, 2971], [6, 2, 0]]
[[430, 2999], [2, 2, 0]]
[[431, 3001], [10, 2, 0]]
[[432, 3011], [2, 2, 0]]
[[433, 3019], [2, 2, 0]]
[[434, 3023], [2, 2, 0]]
[[435, 3037], [2, 2, 0]]
[[436, 3041], [2, 2, 0]]
[[437, 3049], [2, 2, 0]]
[[438, 3061], [2, 2, 0]]
[[439, 3067], [2, 2, 0]]
[[440, 3079], [6, 2, 0]]
[[441, 3083], [2, 2, 0]]
[[442, 3089], [2, 2, 0]]
[[443, 3109], [2, 2, 0]]
[[444, 3119], [2, 2, 0]]
[[445, 3121], [2, 2, 0]]
[[446, 3137], [2, 2, 0]]
[[447, 3163], [2, 2, 0]]
[[448, 3167], [2, 2, 0]]
[[449, 3169], [2, 2, 0]]
[[450, 3181], [2, 2, 0]]
[[451, 3187], [2, 2, 0]]
[[452, 3191], [2, 2, 0]]
[[453, 3203], [2, 2, 0]]
[[454, 3209], [2, 2, 0]]
[[455, 3217], [2, 2, 0]]
[[456, 3221], [2, 2, 0]]
[[457, 3229], [2, 2, 0]]
[[458, 3251], [12, 4, 0]]
[[459, 3253], [2, 2, 0]]
[[460, 3257], [2, 2, 0]]
[[461, 3259], [2, 2, 0]]
[[462, 3271], [2, 2, 0]]
[[463, 3299], "?"]
[[464, 3301], [2, 2, 0]]
[[465, 3307], [2, 2, 0]]
[[466, 3313], [2, 2, 0]]
[[467, 3319], [2, 2, 0]]
[[468, 3323], [2, 2, 0]]
[[469, 3329], [4, 2, 0]]
[[470, 3331], [2, 2, 0]]
[[471, 3343], [2, 2, 0]]
[[472, 3347], [2, 2, 0]]
[[473, 3359], [2, 2, 0]]
[[474, 3361], [2, 2, 0]]
[[475, 3371], [4, 4, 0]]
[[476, 3373], [2, 2, 0]]
[[477, 3389], [2, 2, 0]]
[[478, 3391], [2, 2, 0]]
[[479, 3407], [2, 2, 0]]
[[480, 3413], [3, 2, 0]]
[[481, 3433], [2, 2, 0]]
[[482, 3449], [2, 2, 0]]
[[483, 3457], [8, 2, 0]]
[[484, 3461], [2, 2, 0]]
[[485, 3463], [2, 2, 0]]
[[486, 3467], [4, 4, 0]]
[[487, 3469], [2, 2, 0]]
[[488, 3491], [3, 3, 0]]
[[489, 3499], [2, 2, 0]]
[[490, 3511], [4, 2, 0]]
[[491, 3517], [2, 2, 0]]
[[492, 3527], [4, 2, 0]]
[[493, 3529], [6, 2, 0]]
[[494, 3533], [6, 2, 0]]
[[495, 3539], [4, 4, 0]]
[[496, 3541], [2, 2, 0]]
[[497, 3547], [2, 2, 0]]
[[498, 3557], [2, 2, 0]]
[[499, 3559], [2, 2, 0]]
[[500, 3571], [2, 2, 0]]
time = 41min, 25,327 ms.
\end{verbatim}
\end{multicols}
}

\newpage
{\tiny
\begin{multicols}{4}
\begin{verbatim}
gp > NPs(501,1000)
[[501, 3581], [2, 2, 0]]
[[502, 3583], [2, 2, 0]]
[[503, 3593], [2, 2, 0]]
[[504, 3607], [2, 2, 0]]
[[505, 3613], [2, 2, 0]]
[[506, 3617], [2, 2, 0]]
[[507, 3623], [2, 2, 0]]
[[508, 3631], [8, 2, 0]]
[[509, 3637], [2, 2, 0]]
[[510, 3643], [2, 2, 0]]
[[511, 3659], [2, 2, 0]]
[[512, 3671], [2, 2, 0]]
[[513, 3673], [2, 2, 0]]
[[514, 3677], [6, 2, 0]]
[[515, 3691], [2, 2, 0]]
[[516, 3697], [2, 2, 0]]
[[517, 3701], [2, 2, 0]]
[[518, 3709], [2, 2, 0]]
[[519, 3719], [6, 2, 0]]
[[520, 3727], [2, 2, 0]]
[[521, 3733], [2, 2, 0]]
[[522, 3739], [2, 2, 0]]
[[523, 3761], [2, 2, 0]]
[[524, 3767], [2, 2, 0]]
[[525, 3769], [2, 2, 0]]
[[526, 3779], [16, 16, 1]]
[[527, 3793], [2, 2, 0]]
[[528, 3797], [2, 2, 0]]
[[529, 3803], [2, 2, 0]]
[[530, 3821], [2, 2, 0]]
[[531, 3823], [2, 2, 0]]
[[532, 3833], [2, 2, 0]]
[[533, 3847], [2, 2, 0]]
[[534, 3851], [6, 2, 0]]
[[535, 3853], [2, 2, 0]]
[[536, 3863], [2, 2, 0]]
[[537, 3877], [2, 2, 0]]
[[538, 3881], [2, 2, 0]]
[[539, 3889], [6, 2, 0]]
[[540, 3907], [2, 2, 0]]
[[541, 3911], [2, 2, 0]]
[[542, 3917], [2, 2, 0]]
[[543, 3919], [2, 2, 0]]
[[544, 3923], [4, 2, 0]]
[[545, 3929], [2, 2, 0]]
[[546, 3931], [2, 2, 0]]
[[547, 3943], [3, 2, 0]]
[[548, 3947], [4, 4, 0]]
[[549, 3967], [2, 2, 0]]
[[550, 3989], [4, 2, 0]]
[[551, 4001], [4, 2, 0]]
[[552, 4003], [2, 2, 0]]
[[553, 4007], [2, 2, 0]]
[[554, 4013], [2, 2, 0]]
[[555, 4019], [2, 2, 0]]
[[556, 4021], [2, 2, 0]]
[[557, 4027], [2, 2, 0]]
[[558, 4049], [2, 2, 0]]
[[559, 4051], [10, 2, 0]]
[[560, 4057], [2, 2, 0]]
[[561, 4073], [2, 2, 0]]
[[562, 4079], [2, 2, 0]]
[[563, 4091], [2, 4, 0]]
[[564, 4093], [2, 2, 0]]
[[565, 4099], [2, 2, 0]]
[[566, 4111], [2, 2, 0]]
[[567, 4127], [2, 2, 0]]
[[568, 4129], [2, 2, 0]]
[[569, 4133], [4, 2, 0]]
[[570, 4139], [4, 4, 0]]
[[571, 4153], [2, 2, 0]]
[[572, 4157], [6, 2, 0]]
[[573, 4159], [2, 2, 0]]
[[574, 4177], [2, 2, 0]]
[[575, 4201], [2, 2, 0]]
[[576, 4211], [4, 2, 0]]
[[577, 4217], [2, 2, 0]]
[[578, 4219], [2, 2, 0]]
[[579, 4229], [2, 2, 0]]
[[580, 4231], [2, 2, 0]]
[[581, 4241], [2, 2, 0]]
[[582, 4243], [2, 2, 0]]
[[583, 4253], [6, 2, 0]]
[[584, 4259], [16, 16, 1]]
[[585, 4261], [2, 2, 0]]
[[586, 4271], [2, 2, 0]]
[[587, 4273], [2, 2, 0]]
[[588, 4283], [4, 4, 0]]
[[589, 4289], [2, 2, 0]]
[[590, 4297], [2, 2, 0]]
[[591, 4327], [2, 2, 0]]
[[592, 4337], [2, 2, 0]]
[[593, 4339], [2, 2, 0]]
[[594, 4349], [2, 2, 0]]
[[595, 4357], [6, 2, 0]]
[[596, 4363], [2, 2, 0]]
[[597, 4373], [2, 2, 0]]
[[598, 4391], [2, 2, 0]]
[[599, 4397], [2, 2, 0]]
[[600, 4409], [2, 2, 0]]
[[601, 4421], [2, 2, 0]]
[[602, 4423], [2, 2, 0]]
[[603, 4441], [2, 2, 0]]
[[604, 4447], [2, 2, 0]]
[[605, 4451], [2, 2, 0]]
[[606, 4457], [2, 2, 0]]
[[607, 4463], [2, 2, 0]]
[[608, 4481], [2, 2, 0]]
[[609, 4483], [2, 2, 0]]
[[610, 4493], [6, 2, 0]]
[[611, 4507], [2, 2, 0]]
[[612, 4513], [2, 2, 0]]
[[613, 4517], [2, 2, 0]]
[[614, 4519], [2, 2, 0]]
[[615, 4523], [2, 2, 0]]
[[616, 4547], "?"]
[[617, 4549], [2, 2, 0]]
[[618, 4561], [2, 2, 0]]
[[619, 4567], [2, 2, 0]]
[[620, 4583], [2, 2, 0]]
[[621, 4591], [2, 2, 0]]
[[622, 4597], [2, 2, 0]]
[[623, 4603], [2, 2, 0]]
[[624, 4621], [2, 2, 0]]
[[625, 4637], [2, 2, 0]]
[[626, 4639], [2, 2, 0]]
[[627, 4643], [2, 2, 0]]
[[628, 4649], [2, 2, 0]]
[[629, 4651], [2, 2, 0]]
[[630, 4657], [2, 2, 0]]
[[631, 4663], [2, 2, 0]]
[[632, 4673], [2, 2, 0]]
[[633, 4679], [2, 2, 0]]
[[634, 4691], [2, 2, 0]]
[[635, 4703], [2, 2, 0]]
[[636, 4721], [2, 2, 0]]
[[637, 4723], [2, 2, 0]]
[[638, 4729], [2, 2, 0]]
[[639, 4733], [4, 2, 0]]
[[640, 4751], [2, 2, 0]]
[[641, 4759], [2, 2, 0]]
[[642, 4783], [2, 2, 0]]
[[643, 4787], "?"]
[[644, 4789], [2, 2, 0]]
[[645, 4793], [2, 2, 0]]
[[646, 4799], [2, 2, 0]]
[[647, 4801], [4, 2, 0]]
[[648, 4813], [2, 2, 0]]
[[649, 4817], [2, 2, 0]]
[[650, 4831], [2, 2, 0]]
[[651, 4861], [6, 2, 0]]
[[652, 4871], [2, 2, 0]]
[[653, 4877], [2, 2, 0]]
[[654, 4889], [2, 2, 0]]
[[655, 4903], [2, 2, 0]]
[[656, 4909], [2, 2, 0]]
[[657, 4919], [2, 2, 0]]
[[658, 4931], [4, 4, 0]]
[[659, 4933], [4, 2, 0]]
[[660, 4937], [2, 2, 0]]
[[661, 4943], [2, 2, 0]]
[[662, 4951], [2, 2, 0]]
[[663, 4957], [2, 2, 0]]
[[664, 4967], [2, 2, 0]]
[[665, 4969], [2, 2, 0]]
[[666, 4973], [2, 2, 0]]
[[667, 4987], [2, 2, 0]]
[[668, 4993], [2, 2, 0]]
[[669, 4999], [2, 2, 0]]
[[670, 5003], [2, 2, 0]]
[[671, 5009], [2, 2, 0]]
[[672, 5011], [2, 2, 0]]
[[673, 5021], [2, 2, 0]]
[[674, 5023], [2, 2, 0]]
[[675, 5039], [2, 2, 0]]
[[676, 5051], [2, 2, 0]]
[[677, 5059], [2, 2, 0]]
[[678, 5077], [2, 2, 0]]
[[679, 5081], [2, 2, 0]]
[[680, 5087], [2, 2, 0]]
[[681, 5099], [4, 4, 0]]
[[682, 5101], [2, 2, 0]]
[[683, 5107], [2, 2, 0]]
[[684, 5113], [2, 2, 0]]
[[685, 5119], [2, 2, 0]]
[[686, 5147], [2, 2, 0]]
[[687, 5153], [2, 2, 0]]
[[688, 5167], [2, 2, 0]]
[[689, 5171], [2, 2, 0]]
[[690, 5179], [2, 2, 0]]
[[691, 5189], [2, 2, 0]]
[[692, 5197], [2, 2, 0]]
[[693, 5209], [2, 2, 0]]
[[694, 5227], [2, 2, 0]]
[[695, 5231], [2, 2, 0]]
[[696, 5233], [2, 2, 0]]
[[697, 5237], [2, 2, 0]]
[[698, 5261], [2, 2, 0]]
[[699, 5273], [2, 2, 0]]
[[700, 5279], [2, 2, 0]]
[[701, 5281], [2, 2, 0]]
[[702, 5297], [2, 2, 0]]
[[703, 5303], [2, 2, 0]]
[[704, 5309], [2, 2, 0]]
[[705, 5323], [2, 2, 0]]
[[706, 5333], [2, 2, 0]]
[[707, 5347], [6, 2, 0]]
[[708, 5351], [2, 2, 0]]
[[709, 5381], [2, 2, 0]]
[[710, 5387], [4, 4, 0]]
[[711, 5393], [2, 2, 0]]
[[712, 5399], [2, 2, 0]]
[[713, 5407], [2, 2, 0]]
[[714, 5413], [2, 2, 0]]
[[715, 5417], [2, 2, 0]]
[[716, 5419], [2, 2, 0]]
[[717, 5431], [2, 2, 0]]
[[718, 5437], [2, 2, 0]]
[[719, 5441], [2, 2, 0]]
[[720, 5443], [2, 2, 0]]
[[721, 5449], [2, 2, 0]]
[[722, 5471], [2, 2, 0]]
[[723, 5477], [6, 2, 0]]
[[724, 5479], [2, 2, 0]]
[[725, 5483], [4, 4, 0]]
[[726, 5501], [10, 2, 0]]
[[727, 5503], [2, 2, 0]]
[[728, 5507], "T0"]
[[729, 5519], [2, 2, 0]]
[[730, 5521], [2, 2, 0]]
[[731, 5527], [2, 2, 0]]
[[732, 5531], [2, 2, 0]]
[[733, 5557], [2, 2, 0]]
[[734, 5563], [2, 2, 0]]
[[735, 5569], [2, 2, 0]]
[[736, 5573], [2, 2, 0]]
[[737, 5581], [2, 2, 0]]
[[738, 5591], [2, 2, 0]]
[[739, 5623], [2, 2, 0]]
[[740, 5639], [2, 2, 0]]
[[741, 5641], [2, 2, 0]]
[[742, 5647], [2, 2, 0]]
[[743, 5651], [4, 4, 0]]
[[744, 5653], [2, 2, 0]]
[[745, 5657], [2, 2, 0]]
[[746, 5659], [2, 2, 0]]
[[747, 5669], [2, 2, 0]]
[[748, 5683], [2, 2, 0]]
[[749, 5689], [2, 2, 0]]
[[750, 5693], [6, 2, 0]]
[[751, 5701], [2, 2, 0]]
[[752, 5711], [2, 2, 0]]
[[753, 5717], [2, 2, 0]]
[[754, 5737], [2, 2, 0]]
[[755, 5741], [2, 2, 0]]
[[756, 5743], [2, 2, 0]]
[[757, 5749], [2, 2, 0]]
[[758, 5779], [2, 2, 0]]
[[759, 5783], [2, 2, 0]]
[[760, 5791], [2, 2, 0]]
[[761, 5801], [2, 2, 0]]
[[762, 5807], [2, 2, 0]]
[[763, 5813], [4, 2, 0]]
[[764, 5821], [2, 2, 0]]
[[765, 5827], [2, 2, 0]]
[[766, 5839], [2, 2, 0]]
[[767, 5843], [2, 2, 0]]
[[768, 5849], [2, 2, 0]]
[[769, 5851], [2, 2, 0]]
[[770, 5857], [2, 2, 0]]
[[771, 5861], [2, 2, 0]]
[[772, 5867], [2, 2, 0]]
[[773, 5869], [2, 2, 0]]
[[774, 5879], [2, 2, 0]]
[[775, 5881], [2, 2, 0]]
[[776, 5897], [2, 2, 0]]
[[777, 5903], [2, 2, 0]]
[[778, 5923], [2, 2, 0]]
[[779, 5927], [2, 2, 0]]
[[780, 5939], "T0"]
[[781, 5953], [2, 2, 0]]
[[782, 5981], [2, 2, 0]]
[[783, 5987], "S0"]
[[784, 6007], [2, 2, 0]]
[[785, 6011], [2, 4, 0]]
[[786, 6029], [2, 2, 0]]
[[787, 6037], [2, 2, 0]]
[[788, 6043], [2, 2, 0]]
[[789, 6047], [2, 2, 0]]
[[790, 6053], [2, 2, 0]]
[[791, 6067], [2, 2, 0]]
[[792, 6073], [2, 2, 0]]
[[793, 6079], [2, 2, 0]]
[[794, 6089], [2, 2, 0]]
[[795, 6091], [2, 2, 0]]
[[796, 6101], [2, 2, 0]]
[[797, 6113], [2, 2, 0]]
[[798, 6121], [2, 2, 0]]
[[799, 6131], [4, 4, 0]]
[[800, 6133], [2, 2, 0]]
[[801, 6143], [2, 2, 0]]
[[802, 6151], [2, 2, 0]]
[[803, 6163], [2, 2, 0]]
[[804, 6173], [6, 2, 0]]
[[805, 6197], [4, 2, 0]]
[[806, 6199], [2, 2, 0]]
[[807, 6203], [2, 2, 0]]
[[808, 6211], [2, 2, 0]]
[[809, 6217], [2, 2, 0]]
[[810, 6221], [2, 2, 0]]
[[811, 6229], [2, 2, 0]]
[[812, 6247], [2, 2, 0]]
[[813, 6257], [2, 2, 0]]
[[814, 6263], [2, 2, 0]]
[[815, 6269], [2, 2, 0]]
[[816, 6271], [2, 2, 0]]
[[817, 6277], [2, 2, 0]]
[[818, 6287], [2, 2, 0]]
[[819, 6299], [2, 2, 0]]
[[820, 6301], [6, 2, 0]]
[[821, 6311], [2, 2, 0]]
[[822, 6317], [6, 2, 0]]
[[823, 6323], [2, 2, 0]]
[[824, 6329], [2, 2, 0]]
[[825, 6337], [2, 2, 0]]
[[826, 6343], [2, 2, 0]]
[[827, 6353], [2, 2, 0]]
[[828, 6359], [4, 2, 0]]
[[829, 6361], [2, 2, 0]]
[[830, 6367], [2, 2, 0]]
[[831, 6373], [2, 2, 0]]
[[832, 6379], [2, 2, 0]]
[[833, 6389], [4, 2, 0]]
[[834, 6397], [2, 2, 0]]
[[835, 6421], [2, 2, 0]]
[[836, 6427], [2, 2, 0]]
[[837, 6449], [2, 2, 0]]
[[838, 6451], [2, 2, 0]]
[[839, 6469], [2, 2, 0]]
[[840, 6473], [2, 2, 0]]
[[841, 6481], [4, 2, 0]]
[[842, 6491], [2, 2, 0]]
[[843, 6521], [2, 2, 0]]
[[844, 6529], [2, 2, 0]]
[[845, 6547], [2, 2, 0]]
[[846, 6551], [2, 2, 0]]
[[847, 6553], [2, 2, 0]]
[[848, 6563], "T0"]
[[849, 6569], [2, 2, 0]]
[[850, 6571], [2, 2, 0]]
[[851, 6577], [2, 2, 0]]
[[852, 6581], [2, 2, 0]]
[[853, 6599], [2, 2, 0]]
[[854, 6607], [2, 2, 0]]
[[855, 6619], [2, 2, 0]]
[[856, 6637], [2, 2, 0]]
[[857, 6653], [6, 2, 0]]
[[858, 6659], "?"]
[[859, 6661], [2, 2, 0]]
[[860, 6673], [2, 2, 0]]
[[861, 6679], [2, 2, 0]]
[[862, 6689], [2, 2, 0]]
[[863, 6691], [2, 2, 0]]
[[864, 6701], [2, 2, 0]]
[[865, 6703], [2, 2, 0]]
[[866, 6709], [2, 2, 0]]
[[867, 6719], [2, 2, 0]]
[[868, 6733], [4, 2, 0]]
[[869, 6737], [2, 2, 0]]
[[870, 6761], [2, 2, 0]]
[[871, 6763], [6, 2, 0]]
[[872, 6779], [4, 4, 0]]
[[873, 6781], [2, 2, 0]]
[[874, 6791], [2, 2, 0]]
[[875, 6793], [2, 2, 0]]
[[876, 6803], [2, 2, 0]]
[[877, 6823], [2, 2, 0]]
[[878, 6827], [4, 4, 0]]
[[879, 6829], [2, 2, 0]]
[[880, 6833], [2, 2, 0]]
[[881, 6841], [2, 2, 0]]
[[882, 6857], [2, 2, 0]]
[[883, 6863], [2, 2, 0]]
[[884, 6869], [2, 2, 0]]
[[885, 6871], [2, 2, 0]]
[[886, 6883], [2, 2, 0]]
[[887, 6899], "T0"]
[[888, 6907], [2, 2, 0]]
[[889, 6911], [2, 2, 0]]
[[890, 6917], [4, 2, 0]]
[[891, 6947], [2, 2, 0]]
[[892, 6949], [2, 2, 0]]
[[893, 6959], [6, 2, 0]]
[[894, 6961], [2, 2, 0]]
[[895, 6967], [6, 2, 0]]
[[896, 6971], [2, 2, 0]]
[[897, 6977], [2, 2, 0]]
[[898, 6983], [2, 2, 0]]
[[899, 6991], [2, 2, 0]]
[[900, 6997], [2, 2, 0]]
[[901, 7001], [4, 2, 0]]
[[902, 7013], [4, 2, 0]]
[[903, 7019], [2, 2, 0]]
[[904, 7027], [2, 2, 0]]
[[905, 7039], [2, 2, 0]]
[[906, 7043], [2, 2, 0]]
[[907, 7057], [4, 2, 0]]
[[908, 7069], [2, 2, 0]]
[[909, 7079], [2, 2, 0]]
[[910, 7103], [2, 2, 0]]
[[911, 7109], [3, 2, 0]]
[[912, 7121], [2, 2, 0]]
[[913, 7127], [2, 2, 0]]
[[914, 7129], [6, 2, 0]]
[[915, 7151], [4, 2, 0]]
[[916, 7159], [2, 2, 0]]
[[917, 7177], [2, 2, 0]]
[[918, 7187], "T0"]
[[919, 7193], [2, 2, 0]]
[[920, 7207], [2, 2, 0]]
[[921, 7211], [2, 2, 0]]
[[922, 7213], [2, 2, 0]]
[[923, 7219], [2, 2, 0]]
[[924, 7229], [2, 2, 0]]
[[925, 7237], [6, 2, 0]]
[[926, 7243], [2, 2, 0]]
[[927, 7247], [2, 2, 0]]
[[928, 7253], [2, 2, 0]]
[[929, 7283], [2, 2, 0]]
[[930, 7297], [4, 2, 0]]
[[931, 7307], [4, 4, 0]]
[[932, 7309], [2, 2, 0]]
[[933, 7321], [2, 2, 0]]
[[934, 7331], [2, 2, 0]]
[[935, 7333], [2, 2, 0]]
[[936, 7349], [2, 2, 0]]
[[937, 7351], [10, 2, 0]]
[[938, 7369], [2, 2, 0]]
[[939, 7393], [2, 2, 0]]
[[940, 7411], [2, 2, 0]]
[[941, 7417], [2, 2, 0]]
[[942, 7433], [2, 2, 0]]
[[943, 7451], [4, 2, 0]]
[[944, 7457], [2, 2, 0]]
[[945, 7459], [2, 2, 0]]
[[946, 7477], [2, 2, 0]]
[[947, 7481], [2, 2, 0]]
[[948, 7487], [2, 2, 0]]
[[949, 7489], [2, 2, 0]]
[[950, 7499], [2, 2, 0]]
[[951, 7507], [6, 2, 0]]
[[952, 7517], [3, 2, 0]]
[[953, 7523], [16, 16, 1]]
[[954, 7529], [2, 2, 0]]
[[955, 7537], [2, 2, 0]]
[[956, 7541], [2, 2, 0]]
[[957, 7547], [10, 2, 0]]
[[958, 7549], [2, 2, 0]]
[[959, 7559], [2, 2, 0]]
[[960, 7561], [4, 2, 0]]
[[961, 7573], [2, 2, 0]]
[[962, 7577], "S0"]
[[963, 7583], [2, 2, 0]]
[[964, 7589], [2, 2, 0]]
[[965, 7591], [2, 2, 0]]
[[966, 7603], [2, 2, 0]]
[[967, 7607], [2, 2, 0]]
[[968, 7621], [2, 2, 0]]
[[969, 7639], [2, 2, 0]]
[[970, 7643], [4, 4, 0]]
[[971, 7649], [2, 2, 0]]
[[972, 7669], [2, 2, 0]]
[[973, 7673], [2, 2, 0]]
[[974, 7681], [4, 2, 0]]
[[975, 7687], [2, 2, 0]]
[[976, 7691], [2, 2, 0]]
[[977, 7699], [2, 2, 0]]
[[978, 7703], [2, 2, 0]]
[[979, 7717], [2, 2, 0]]
[[980, 7723], [2, 2, 0]]
[[981, 7727], [2, 2, 0]]
[[982, 7741], [4, 2, 0]]
[[983, 7753], [2, 2, 0]]
[[984, 7757], [2, 2, 0]]
[[985, 7759], [2, 2, 0]]
[[986, 7789], [2, 2, 0]]
[[987, 7793], [2, 2, 0]]
[[988, 7817], [2, 2, 0]]
[[989, 7823], [2, 2, 0]]
[[990, 7829], [2, 2, 0]]
[[991, 7841], [2, 2, 0]]
[[992, 7853], [2, 2, 0]]
[[993, 7867], [2, 2, 0]]
[[994, 7873], [2, 2, 0]]
[[995, 7877], "T0"]
[[996, 7879], [2, 2, 0]]
[[997, 7883], [2, 2, 0]]
[[998, 7901], [2, 2, 0]]
[[999, 7907], [2, 2, 0]]
[[1000, 7919], [2, 2, 0]]
time = 2h, 3min, 9,684 ms.
\end{verbatim}
\end{multicols}
}

\newpage
{\tiny
\begin{multicols}{4}
\begin{verbatim}
gp > NPs(1001,1500)
[[1001, 7927], [2, 2, 0]]
[[1002, 7933], [2, 2, 0]]
[[1003, 7937], [6, 2, 0]]
[[1004, 7949], [2, 2, 0]]
[[1005, 7951], [2, 2, 0]]
[[1006, 7963], [2, 2, 0]]
[[1007, 7993], [2, 2, 0]]
[[1008, 8009], [2, 2, 0]]
[[1009, 8011], [2, 2, 0]]
[[1010, 8017], [2, 2, 0]]
[[1011, 8039], [2, 2, 0]]
[[1012, 8053], [2, 2, 0]]
[[1013, 8059], [2, 2, 0]]
[[1014, 8069], [4, 2, 0]]
[[1015, 8081], [2, 2, 0]]
[[1016, 8087], [2, 2, 0]]
[[1017, 8089], [2, 2, 0]]
[[1018, 8093], [2, 2, 0]]
[[1019, 8101], [6, 2, 0]]
[[1020, 8111], [2, 2, 0]]
[[1021, 8117], [2, 2, 0]]
[[1022, 8123], [2, 2, 0]]
[[1023, 8147], [4, 4, 0]]
[[1024, 8161], [2, 2, 0]]
[[1025, 8167], [2, 2, 0]]
[[1026, 8171], [2, 2, 0]]
[[1027, 8179], [2, 2, 0]]
[[1028, 8191], [4, 2, 0]]
[[1029, 8209], [2, 2, 0]]
[[1030, 8219], [2, 2, 0]]
[[1031, 8221], [2, 2, 0]]
[[1032, 8231], [2, 2, 0]]
[[1033, 8233], [6, 2, 0]]
[[1034, 8237], [2, 2, 0]]
[[1035, 8243], [4, 4, 0]]
[[1036, 8263], [4, 2, 0]]
[[1037, 8269], [2, 2, 0]]
[[1038, 8273], [2, 2, 0]]
[[1039, 8287], [2, 2, 0]]
[[1040, 8291], [2, 4, 0]]
[[1041, 8293], [2, 2, 0]]
[[1042, 8297], [2, 2, 0]]
[[1043, 8311], [2, 2, 0]]
[[1044, 8317], [2, 2, 0]]
[[1045, 8329], [2, 2, 0]]
[[1046, 8353], [2, 2, 0]]
[[1047, 8363], [4, 4, 0]]
[[1048, 8369], [2, 2, 0]]
[[1049, 8377], [2, 2, 0]]
[[1050, 8387], [2, 2, 0]]
[[1051, 8389], [2, 2, 0]]
[[1052, 8419], [2, 2, 0]]
[[1053, 8423], [2, 2, 0]]
[[1054, 8429], [6, 2, 0]]
[[1055, 8431], [2, 2, 0]]
[[1056, 8443], [2, 2, 0]]
[[1057, 8447], [2, 2, 0]]
[[1058, 8461], [2, 2, 0]]
[[1059, 8467], [2, 2, 0]]
[[1060, 8501], [2, 2, 0]]
[[1061, 8513], [2, 2, 0]]
[[1062, 8521], [2, 2, 0]]
[[1063, 8527], [2, 2, 0]]
[[1064, 8537], [2, 2, 0]]
[[1065, 8539], [2, 2, 0]]
[[1066, 8543], [2, 2, 0]]
[[1067, 8563], [2, 2, 0]]
[[1068, 8573], [2, 2, 0]]
[[1069, 8581], [2, 2, 0]]
[[1070, 8597], [2, 2, 0]]
[[1071, 8599], [2, 2, 0]]
[[1072, 8609], [2, 2, 0]]
[[1073, 8623], [2, 2, 0]]
[[1074, 8627], [2, 2, 0]]
[[1075, 8629], [2, 2, 0]]
[[1076, 8641], [4, 2, 0]]
[[1077, 8647], [2, 2, 0]]
[[1078, 8663], [2, 2, 0]]
[[1079, 8669], [2, 2, 0]]
[[1080, 8677], [2, 2, 0]]
[[1081, 8681], [2, 2, 0]]
[[1082, 8689], [2, 2, 0]]
[[1083, 8693], [2, 2, 0]]
[[1084, 8699], [4, 4, 0]]
[[1085, 8707], [2, 2, 0]]
[[1086, 8713], [4, 2, 0]]
[[1087, 8719], [2, 2, 0]]
[[1088, 8731], [2, 2, 0]]
[[1089, 8737], [2, 2, 0]]
[[1090, 8741], [2, 2, 0]]
[[1091, 8747], [4, 4, 0]]
[[1092, 8753], [2, 2, 0]]
[[1093, 8761], [2, 2, 0]]
[[1094, 8779], [2, 2, 0]]
[[1095, 8783], [2, 2, 0]]
[[1096, 8803], [3, 2, 0]]
[[1097, 8807], [2, 2, 0]]
[[1098, 8819], [2, 2, 0]]
[[1099, 8821], [4, 2, 0]]
[[1100, 8831], [2, 2, 0]]
[[1101, 8837], "T1"]
[[1102, 8839], [2, 2, 0]]
[[1103, 8849], [2, 2, 0]]
[[1104, 8861], [2, 2, 0]]
[[1105, 8863], [2, 2, 0]]
[[1106, 8867], [2, 2, 0]]
[[1107, 8887], [2, 2, 0]]
[[1108, 8893], [2, 2, 0]]
[[1109, 8923], [2, 2, 0]]
[[1110, 8929], [2, 2, 0]]
[[1111, 8933], [2, 2, 0]]
[[1112, 8941], [2, 2, 0]]
[[1113, 8951], [2, 2, 0]]
[[1114, 8963], [2, 2, 0]]
[[1115, 8969], [2, 2, 0]]
[[1116, 8971], [2, 2, 0]]
[[1117, 8999], [2, 2, 0]]
[[1118, 9001], [4, 2, 0]]
[[1119, 9007], [2, 2, 0]]
[[1120, 9011], [2, 2, 0]]
[[1121, 9013], [2, 2, 0]]
[[1122, 9029], [2, 2, 0]]
[[1123, 9041], [2, 2, 0]]
[[1124, 9043], [2, 2, 0]]
[[1125, 9049], [2, 2, 0]]
[[1126, 9059], [2, 2, 0]]
[[1127, 9067], [2, 2, 0]]
[[1128, 9091], [2, 2, 0]]
[[1129, 9103], [2, 2, 0]]
[[1130, 9109], [2, 2, 0]]
[[1131, 9127], [2, 2, 0]]
[[1132, 9133], [2, 2, 0]]
[[1133, 9137], [2, 2, 0]]
[[1134, 9151], [2, 2, 0]]
[[1135, 9157], [2, 2, 0]]
[[1136, 9161], [2, 2, 0]]
[[1137, 9173], [4, 2, 0]]
[[1138, 9181], [2, 2, 0]]
[[1139, 9187], [2, 2, 0]]
[[1140, 9199], [2, 2, 0]]
[[1141, 9203], [2, 2, 0]]
[[1142, 9209], [2, 2, 0]]
[[1143, 9221], [2, 2, 0]]
[[1144, 9227], [2, 2, 0]]
[[1145, 9239], [2, 2, 0]]
[[1146, 9241], [2, 2, 0]]
[[1147, 9257], [2, 2, 0]]
[[1148, 9277], [2, 2, 0]]
[[1149, 9281], [2, 2, 0]]
[[1150, 9283], [2, 2, 0]]
[[1151, 9293], [2, 2, 0]]
[[1152, 9311], [4, 2, 0]]
[[1153, 9319], [2, 2, 0]]
[[1154, 9323], [2, 2, 0]]
[[1155, 9337], [2, 2, 0]]
[[1156, 9341], [2, 2, 0]]
[[1157, 9343], [2, 2, 0]]
[[1158, 9349], [2, 2, 0]]
[[1159, 9371], [3, 3, 0]]
[[1160, 9377], [2, 2, 0]]
[[1161, 9391], [2, 2, 0]]
[[1162, 9397], [2, 2, 0]]
[[1163, 9403], [2, 2, 0]]
[[1164, 9413], [2, 2, 0]]
[[1165, 9419], [3, 3, 0]]
[[1166, 9421], [2, 2, 0]]
[[1167, 9431], [2, 2, 0]]
[[1168, 9433], [2, 2, 0]]
[[1169, 9437], [2, 2, 0]]
[[1170, 9439], [2, 2, 0]]
[[1171, 9461], [2, 2, 0]]
[[1172, 9463], [2, 2, 0]]
[[1173, 9467], [4, 4, 0]]
[[1174, 9473], [2, 2, 0]]
[[1175, 9479], [2, 2, 0]]
[[1176, 9491], [3, 3, 0]]
[[1177, 9497], "S0"]
[[1178, 9511], [2, 2, 0]]
[[1179, 9521], [2, 2, 0]]
[[1180, 9533], "S0"]
[[1181, 9539], [2, 2, 0]]
[[1182, 9547], [2, 2, 0]]
[[1183, 9551], [2, 2, 0]]
[[1184, 9587], [4, 4, 0]]
[[1185, 9601], [4, 2, 0]]
[[1186, 9613], [2, 2, 0]]
[[1187, 9619], [2, 2, 0]]
[[1188, 9623], [2, 2, 0]]
[[1189, 9629], [2, 2, 0]]
[[1190, 9631], [2, 2, 0]]
[[1191, 9643], [2, 2, 0]]
[[1192, 9649], [2, 2, 0]]
[[1193, 9661], [2, 2, 0]]
[[1194, 9677], [2, 2, 0]]
[[1195, 9679], [2, 2, 0]]
[[1196, 9689], [2, 2, 0]]
[[1197, 9697], [2, 2, 0]]
[[1198, 9719], [2, 2, 0]]
[[1199, 9721], [6, 2, 0]]
[[1200, 9733], [2, 2, 0]]
[[1201, 9739], [2, 2, 0]]
[[1202, 9743], [2, 2, 0]]
[[1203, 9749], [3, 2, 0]]
[[1204, 9767], [2, 2, 0]]
[[1205, 9769], [2, 2, 0]]
[[1206, 9781], [2, 2, 0]]
[[1207, 9787], [2, 2, 0]]
[[1208, 9791], [2, 2, 0]]
[[1209, 9803], [4, 2, 0]]
[[1210, 9811], [2, 2, 0]]
[[1211, 9817], [2, 2, 0]]
[[1212, 9829], [4, 2, 0]]
[[1213, 9833], [2, 2, 0]]
[[1214, 9839], [2, 2, 0]]
[[1215, 9851], [2, 2, 0]]
[[1216, 9857], [4, 2, 0]]
[[1217, 9859], [2, 2, 0]]
[[1218, 9871], [2, 2, 0]]
[[1219, 9883], [2, 2, 0]]
[[1220, 9887], [2, 2, 0]]
[[1221, 9901], [4, 2, 0]]
[[1222, 9907], [2, 2, 0]]
[[1223, 9923], [2, 2, 0]]
[[1224, 9929], [2, 2, 0]]
[[1225, 9931], [2, 2, 0]]
[[1226, 9941], [2, 2, 0]]
[[1227, 9949], [2, 2, 0]]
[[1228, 9967], [2, 2, 0]]
[[1229, 9973], [2, 2, 0]]
[[1230, 10007], [2, 2, 0]]
[[1231, 10009], [2, 2, 0]]
[[1232, 10037], [2, 2, 0]]
[[1233, 10039], [2, 2, 0]]
[[1234, 10061], [2, 2, 0]]
[[1235, 10067], [2, 2, 0]]
[[1236, 10069], [2, 2, 0]]
[[1237, 10079], [2, 2, 0]]
[[1238, 10091], [2, 2, 0]]
[[1239, 10093], [2, 2, 0]]
[[1240, 10099], [4, 2, 0]]
[[1241, 10103], [2, 2, 0]]
[[1242, 10111], [2, 2, 0]]
[[1243, 10133], [2, 2, 0]]
[[1244, 10139], [4, 2, 0]]
[[1245, 10141], [2, 2, 0]]
[[1246, 10151], [2, 2, 0]]
[[1247, 10159], [2, 2, 0]]
[[1248, 10163], [2, 2, 0]]
[[1249, 10169], [2, 2, 0]]
[[1250, 10177], [2, 2, 0]]
[[1251, 10181], [2, 2, 0]]
[[1252, 10193], [2, 2, 0]]
[[1253, 10211], [4, 2, 0]]
[[1254, 10223], [2, 2, 0]]
[[1255, 10243], [2, 2, 0]]
[[1256, 10247], [2, 2, 0]]
[[1257, 10253], [2, 2, 0]]
[[1258, 10259], [2, 2, 0]]
[[1259, 10267], [2, 2, 0]]
[[1260, 10271], [2, 2, 0]]
[[1261, 10273], [2, 2, 0]]
[[1262, 10289], [2, 2, 0]]
[[1263, 10301], [2, 2, 0]]
[[1264, 10303], [2, 2, 0]]
[[1265, 10313], [2, 2, 0]]
[[1266, 10321], [2, 2, 0]]
[[1267, 10331], [4, 4, 0]]
[[1268, 10333], [2, 2, 0]]
[[1269, 10337], [2, 2, 0]]
[[1270, 10343], [2, 2, 0]]
[[1271, 10357], [2, 2, 0]]
[[1272, 10369], [8, 2, 0]]
[[1273, 10391], [2, 2, 0]]
[[1274, 10399], [2, 2, 0]]
[[1275, 10427], [2, 2, 0]]
[[1276, 10429], [2, 2, 0]]
[[1277, 10433], [2, 2, 0]]
[[1278, 10453], [2, 2, 0]]
[[1279, 10457], "S0"]
[[1280, 10459], [2, 2, 0]]
[[1281, 10463], [2, 2, 0]]
[[1282, 10477], [4, 2, 0]]
[[1283, 10487], [6, 2, 0]]
[[1284, 10499], [2, 2, 0]]
[[1285, 10501], [4, 2, 0]]
[[1286, 10513], [2, 2, 0]]
[[1287, 10529], [2, 2, 0]]
[[1288, 10531], [2, 2, 0]]
[[1289, 10559], [2, 2, 0]]
[[1290, 10567], [2, 2, 0]]
[[1291, 10589], [6, 2, 0]]
[[1292, 10597], [2, 2, 0]]
[[1293, 10601], [2, 2, 0]]
[[1294, 10607], [2, 2, 0]]
[[1295, 10613], [2, 2, 0]]
[[1296, 10627], [2, 2, 0]]
[[1297, 10631], [2, 2, 0]]
[[1298, 10639], [2, 2, 0]]
[[1299, 10651], [2, 2, 0]]
[[1300, 10657], [2, 2, 0]]
[[1301, 10663], [2, 2, 0]]
[[1302, 10667], "?"]
[[1303, 10687], [2, 2, 0]]
[[1304, 10691], [4, 2, 0]]
[[1305, 10709], [2, 2, 0]]
[[1306, 10711], [2, 2, 0]]
[[1307, 10723], [2, 2, 0]]
[[1308, 10729], [2, 2, 0]]
[[1309, 10733], [6, 2, 0]]
[[1310, 10739], [2, 2, 0]]
[[1311, 10753], [2, 2, 0]]
[[1312, 10771], [2, 2, 0]]
[[1313, 10781], [2, 2, 0]]
[[1314, 10789], [2, 2, 0]]
[[1315, 10799], [2, 2, 0]]
[[1316, 10831], [6, 2, 0]]
[[1317, 10837], [2, 2, 0]]
[[1318, 10847], [2, 2, 0]]
[[1319, 10853], [2, 2, 0]]
[[1320, 10859], [4, 4, 0]]
[[1321, 10861], [2, 2, 0]]
[[1322, 10867], [2, 2, 0]]
[[1323, 10883], [10, 10, 1]]
[[1324, 10889], [2, 2, 0]]
[[1325, 10891], [2, 2, 0]]
[[1326, 10903], [2, 2, 0]]
[[1327, 10909], [4, 2, 0]]
[[1328, 10937], "S0"]
[[1329, 10939], [2, 2, 0]]
[[1330, 10949], [2, 2, 0]]
[[1331, 10957], [2, 2, 0]]
[[1332, 10973], [4, 2, 0]]
[[1333, 10979], [2, 2, 0]]
[[1334, 10987], [2, 2, 0]]
[[1335, 10993], [2, 2, 0]]
[[1336, 11003], [4, 4, 0]]
[[1337, 11027], [2, 4, 0]]
[[1338, 11047], [2, 2, 0]]
[[1339, 11057], [2, 2, 0]]
[[1340, 11059], [2, 2, 0]]
[[1341, 11069], [6, 2, 0]]
[[1342, 11071], [2, 2, 0]]
[[1343, 11083], [2, 2, 0]]
[[1344, 11087], [2, 2, 0]]
[[1345, 11093], [2, 2, 0]]
[[1346, 11113], [2, 2, 0]]
[[1347, 11117], [2, 2, 0]]
[[1348, 11119], [2, 2, 0]]
[[1349, 11131], [2, 2, 0]]
[[1350, 11149], [2, 2, 0]]
[[1351, 11159], [2, 2, 0]]
[[1352, 11161], [2, 2, 0]]
[[1353, 11171], [4, 4, 0]]
[[1354, 11173], [2, 2, 0]]
[[1355, 11177], [2, 2, 0]]
[[1356, 11197], [2, 2, 0]]
[[1357, 11213], [6, 2, 0]]
[[1358, 11239], [2, 2, 0]]
[[1359, 11243], [2, 2, 0]]
[[1360, 11251], [10, 2, 0]]
[[1361, 11257], [2, 2, 0]]
[[1362, 11261], [2, 2, 0]]
[[1363, 11273], [2, 2, 0]]
[[1364, 11279], [2, 2, 0]]
[[1365, 11287], [6, 2, 0]]
[[1366, 11299], [2, 2, 0]]
[[1367, 11311], [2, 2, 0]]
[[1368, 11317], [2, 2, 0]]
[[1369, 11321], [2, 2, 0]]
[[1370, 11329], [2, 2, 0]]
[[1371, 11351], [2, 2, 0]]
[[1372, 11353], [2, 2, 0]]
[[1373, 11369], [2, 2, 0]]
[[1374, 11383], [2, 2, 0]]
[[1375, 11393], [2, 2, 0]]
[[1376, 11399], [2, 2, 0]]
[[1377, 11411], [2, 2, 0]]
[[1378, 11423], [2, 2, 0]]
[[1379, 11437], [2, 2, 0]]
[[1380, 11443], "S0"]
[[1381, 11447], [2, 2, 0]]
[[1382, 11467], [6, 2, 0]]
[[1383, 11471], [2, 2, 0]]
[[1384, 11483], [2, 2, 0]]
[[1385, 11489], [2, 2, 0]]
[[1386, 11491], [2, 2, 0]]
[[1387, 11497], [2, 2, 0]]
[[1388, 11503], [2, 2, 0]]
[[1389, 11519], [2, 2, 0]]
[[1390, 11527], [2, 2, 0]]
[[1391, 11549], [6, 2, 0]]
[[1392, 11551], [2, 2, 0]]
[[1393, 11579], [2, 2, 0]]
[[1394, 11587], [2, 2, 0]]
[[1395, 11593], [2, 2, 0]]
[[1396, 11597], [2, 2, 0]]
[[1397, 11617], [2, 2, 0]]
[[1398, 11621], [2, 2, 0]]
[[1399, 11633], [2, 2, 0]]
[[1400, 11657], [2, 2, 0]]
[[1401, 11677], [2, 2, 0]]
[[1402, 11681], [2, 2, 0]]
[[1403, 11689], [2, 2, 0]]
[[1404, 11699], "S1"]
[[1405, 11701], [2, 2, 0]]
[[1406, 11717], [2, 2, 0]]
[[1407, 11719], [2, 2, 0]]
[[1408, 11731], [2, 2, 0]]
[[1409, 11743], [2, 2, 0]]
[[1410, 11777], [2, 2, 0]]
[[1411, 11779], [2, 2, 0]]
[[1412, 11783], [2, 2, 0]]
[[1413, 11789], [2, 2, 0]]
[[1414, 11801], [2, 2, 0]]
[[1415, 11807], [2, 2, 0]]
[[1416, 11813], [4, 2, 0]]
[[1417, 11821], [2, 2, 0]]
[[1418, 11827], [6, 2, 0]]
[[1419, 11831], [2, 2, 0]]
[[1420, 11833], [2, 2, 0]]
[[1421, 11839], [2, 2, 0]]
[[1422, 11863], [2, 2, 0]]
[[1423, 11867], [4, 4, 0]]
[[1424, 11887], [2, 2, 0]]
[[1425, 11897], "S0"]
[[1426, 11903], [2, 2, 0]]
[[1427, 11909], [2, 2, 0]]
[[1428, 11923], "S0"]
[[1429, 11927], [2, 2, 0]]
[[1430, 11933], [2, 2, 0]]
[[1431, 11939], [2, 2, 0]]
[[1432, 11941], [2, 2, 0]]
[[1433, 11953], [2, 2, 0]]
[[1434, 11959], [2, 2, 0]]
[[1435, 11969], [2, 2, 0]]
[[1436, 11971], [2, 2, 0]]
[[1437, 11981], [2, 2, 0]]
[[1438, 11987], [4, 4, 0]]
[[1439, 12007], [2, 2, 0]]
[[1440, 12011], [2, 4, 0]]
[[1441, 12037], [2, 2, 0]]
[[1442, 12041], [2, 2, 0]]
[[1443, 12043], [2, 2, 0]]
[[1444, 12049], [2, 2, 0]]
[[1445, 12071], [2, 2, 0]]
[[1446, 12073], [2, 2, 0]]
[[1447, 12097], [2, 2, 0]]
[[1448, 12101], [8, 2, 0]]
[[1449, 12107], [2, 2, 0]]
[[1450, 12109], [2, 2, 0]]
[[1451, 12113], [2, 2, 0]]
[[1452, 12119], [2, 2, 0]]
[[1453, 12143], [2, 2, 0]]
[[1454, 12149], [4, 2, 0]]
[[1455, 12157], [2, 2, 0]]
[[1456, 12161], [2, 2, 0]]
[[1457, 12163], [2, 2, 0]]
[[1458, 12197], "S0"]
[[1459, 12203], [4, 4, 0]]
[[1460, 12211], [2, 2, 0]]
[[1461, 12227], "?"]
[[1462, 12239], [2, 2, 0]]
[[1463, 12241], [2, 2, 0]]
[[1464, 12251], [10, 2, 0]]
[[1465, 12253], [2, 2, 0]]
[[1466, 12263], [2, 2, 0]]
[[1467, 12269], "S0"]
[[1468, 12277], [2, 2, 0]]
[[1469, 12281], [2, 2, 0]]
[[1470, 12289], [16, 2, 0]]
[[1471, 12301], [2, 2, 0]]
[[1472, 12323], [4, 4, 0]]
[[1473, 12329], [2, 2, 0]]
[[1474, 12343], [4, 2, 0]]
[[1475, 12347], [4, 4, 0]]
[[1476, 12373], [2, 2, 0]]
[[1477, 12377], [2, 2, 0]]
[[1478, 12379], [2, 2, 0]]
[[1479, 12391], [2, 2, 0]]
[[1480, 12401], [2, 2, 0]]
[[1481, 12409], [2, 2, 0]]
[[1482, 12413], [2, 2, 0]]
[[1483, 12421], [2, 2, 0]]
[[1484, 12433], [2, 2, 0]]
[[1485, 12437], [4, 2, 0]]
[[1486, 12451], [2, 2, 0]]
[[1487, 12457], [2, 2, 0]]
[[1488, 12473], [2, 2, 0]]
[[1489, 12479], [2, 2, 0]]
[[1490, 12487], [2, 2, 0]]
[[1491, 12491], [4, 4, 0]]
[[1492, 12497], [2, 2, 0]]
[[1493, 12503], [2, 2, 0]]
[[1494, 12511], [2, 2, 0]]
[[1495, 12517], [2, 2, 0]]
[[1496, 12527], [2, 2, 0]]
[[1497, 12539], [4, 4, 0]]
[[1498, 12541], [2, 2, 0]]
[[1499, 12547], [2, 2, 0]]
[[1500, 12553], [2, 2, 0]]
time = 7h, 58min, 27,811 ms.
\end{verbatim}
\end{multicols}
}

\newpage
{\tiny
\begin{multicols}{4}
\begin{verbatim}
gp > NPs(1501,2000)
[[1501, 12569], [2, 2, 0]]
[[1502, 12577], [2, 2, 0]]
[[1503, 12583], [2, 2, 0]]
[[1504, 12589], [2, 2, 0]]
[[1505, 12601], [4, 2, 0]]
[[1506, 12611], [3, 3, 0]]
[[1507, 12613], [2, 2, 0]]
[[1508, 12619], [2, 2, 0]]
[[1509, 12637], [6, 2, 0]]
[[1510, 12641], [2, 2, 0]]
[[1511, 12647], [2, 2, 0]]
[[1512, 12653], [2, 2, 0]]
[[1513, 12659], "S1"]
[[1514, 12671], [2, 2, 0]]
[[1515, 12689], [2, 2, 0]]
[[1516, 12697], [2, 2, 0]]
[[1517, 12703], [2, 2, 0]]
[[1518, 12713], [2, 2, 0]]
[[1519, 12721], [2, 2, 0]]
[[1520, 12739], [2, 2, 0]]
[[1521, 12743], [2, 2, 0]]
[[1522, 12757], [2, 2, 0]]
[[1523, 12763], [2, 2, 0]]
[[1524, 12781], [2, 2, 0]]
[[1525, 12791], [2, 2, 0]]
[[1526, 12799], [3, 2, 0]]
[[1527, 12809], [2, 2, 0]]
[[1528, 12821], [2, 2, 0]]
[[1529, 12823], [2, 2, 0]]
[[1530, 12829], [2, 2, 0]]
[[1531, 12841], [2, 2, 0]]
[[1532, 12853], [2, 2, 0]]
[[1533, 12889], [2, 2, 0]]
[[1534, 12893], [2, 2, 0]]
[[1535, 12899], "S1"]
[[1536, 12907], [2, 2, 0]]
[[1537, 12911], [2, 2, 0]]
[[1538, 12917], [4, 2, 0]]
[[1539, 12919], [2, 2, 0]]
[[1540, 12923], [2, 2, 0]]
[[1541, 12941], [2, 2, 0]]
[[1542, 12953], [2, 2, 0]]
[[1543, 12959], [2, 2, 0]]
[[1544, 12967], [2, 2, 0]]
[[1545, 12973], [2, 2, 0]]
[[1546, 12979], [3, 2, 0]]
[[1547, 12983], [2, 2, 0]]
[[1548, 13001], [2, 2, 0]]
[[1549, 13003], [2, 2, 0]]
[[1550, 13007], [2, 2, 0]]
[[1551, 13009], [2, 2, 0]]
[[1552, 13033], [2, 2, 0]]
[[1553, 13037], "S0"]
[[1554, 13043], "S1"]
[[1555, 13049], [2, 2, 0]]
[[1556, 13063], [2, 2, 0]]
[[1557, 13093], [2, 2, 0]]
[[1558, 13099], [2, 2, 0]]
[[1559, 13103], [2, 2, 0]]
[[1560, 13109], [2, 2, 0]]
[[1561, 13121], [2, 2, 0]]
[[1562, 13127], [2, 2, 0]]
[[1563, 13147], [2, 2, 0]]
[[1564, 13151], [2, 2, 0]]
[[1565, 13159], [2, 2, 0]]
[[1566, 13163], [4, 4, 0]]
[[1567, 13171], [2, 2, 0]]
[[1568, 13177], [2, 2, 0]]
[[1569, 13183], [26, 2, 1]]
[[1570, 13187], [2, 2, 0]]
[[1571, 13217], [2, 2, 0]]
[[1572, 13219], "S0"]
[[1573, 13229], [6, 2, 0]]
[[1574, 13241], [2, 2, 0]]
[[1575, 13249], [4, 2, 0]]
[[1576, 13259], [2, 2, 0]]
[[1577, 13267], [2, 2, 0]]
[[1578, 13291], [2, 2, 0]]
[[1579, 13297], [2, 2, 0]]
[[1580, 13309], [2, 2, 0]]
[[1581, 13313], [4, 2, 0]]
[[1582, 13327], [2, 2, 0]]
[[1583, 13331], [2, 2, 0]]
[[1584, 13337], "S0"]
[[1585, 13339], [2, 2, 0]]
[[1586, 13367], [2, 2, 0]]
[[1587, 13381], [2, 2, 0]]
[[1588, 13397], [2, 2, 0]]
[[1589, 13399], [2, 2, 0]]
[[1590, 13411], [2, 2, 0]]
[[1591, 13417], [2, 2, 0]]
[[1592, 13421], [2, 2, 0]]
[[1593, 13441], [4, 2, 0]]
[[1594, 13451], [2, 4, 0]]
[[1595, 13457], [2, 2, 0]]
[[1596, 13463], [2, 2, 0]]
[[1597, 13469], [2, 2, 0]]
[[1598, 13477], [2, 2, 0]]
[[1599, 13487], [2, 2, 0]]
[[1600, 13499], [3, 3, 0]]
[[1601, 13513], [2, 2, 0]]
[[1602, 13523], [8, 8, 1]]
[[1603, 13537], [2, 2, 0]]
[[1604, 13553], [2, 2, 0]]
[[1605, 13567], [2, 2, 0]]
[[1606, 13577], [2, 2, 0]]
[[1607, 13591], [2, 2, 0]]
[[1608, 13597], [2, 2, 0]]
[[1609, 13613], [2, 2, 0]]
[[1610, 13619], [2, 2, 0]]
[[1611, 13627], [2, 2, 0]]
[[1612, 13633], [2, 2, 0]]
[[1613, 13649], [2, 2, 0]]
[[1614, 13669], [2, 2, 0]]
[[1615, 13679], [2, 2, 0]]
[[1616, 13681], [2, 2, 0]]
[[1617, 13687], [2, 2, 0]]
[[1618, 13691], [4, 4, 0]]
[[1619, 13693], [2, 2, 0]]
[[1620, 13697], [2, 2, 0]]
[[1621, 13709], [2, 2, 0]]
[[1622, 13711], [2, 2, 0]]
[[1623, 13721], [4, 2, 0]]
[[1624, 13723], [2, 2, 0]]
[[1625, 13729], [2, 2, 0]]
[[1626, 13751], [2, 2, 0]]
[[1627, 13757], [2, 2, 0]]
[[1628, 13759], [2, 2, 0]]
[[1629, 13763], [2, 2, 0]]
[[1630, 13781], [2, 2, 0]]
[[1631, 13789], [2, 2, 0]]
[[1632, 13799], [2, 2, 0]]
[[1633, 13807], [2, 2, 0]]
[[1634, 13829], [4, 2, 0]]
[[1635, 13831], [2, 2, 0]]
[[1636, 13841], [2, 2, 0]]
[[1637, 13859], [4, 4, 0]]
[[1638, 13873], [2, 2, 0]]
[[1639, 13877], [4, 2, 0]]
[[1640, 13879], [2, 2, 0]]
[[1641, 13883], [2, 2, 0]]
[[1642, 13901], [2, 2, 0]]
[[1643, 13903], [2, 2, 0]]
[[1644, 13907], [2, 2, 0]]
[[1645, 13913], [2, 2, 0]]
[[1646, 13921], [2, 2, 0]]
[[1647, 13931], [2, 2, 0]]
[[1648, 13933], [2, 2, 0]]
[[1649, 13963], [2, 2, 0]]
[[1650, 13967], [2, 2, 0]]
[[1651, 13997], "S0"]
[[1652, 13999], [2, 2, 0]]
[[1653, 14009], [2, 2, 0]]
[[1654, 14011], [2, 2, 0]]
[[1655, 14029], [2, 2, 0]]
[[1656, 14033], [2, 2, 0]]
[[1657, 14051], [4, 2, 0]]
[[1658, 14057], [2, 2, 0]]
[[1659, 14071], [2, 2, 0]]
[[1660, 14081], [4, 2, 0]]
[[1661, 14083], "S0"]
[[1662, 14087], [2, 2, 0]]
[[1663, 14107], [2, 2, 0]]
[[1664, 14143], [2, 2, 0]]
[[1665, 14149], [2, 2, 0]]
[[1666, 14153], [2, 2, 0]]
[[1667, 14159], [2, 2, 0]]
[[1668, 14173], [2, 2, 0]]
[[1669, 14177], [2, 2, 0]]
[[1670, 14197], [2, 2, 0]]
[[1671, 14207], [2, 2, 0]]
[[1672, 14221], [2, 2, 0]]
[[1673, 14243], "S1"]
[[1674, 14249], [2, 2, 0]]
[[1675, 14251], [2, 2, 0]]
[[1676, 14281], "?"]
[[1677, 14293], [2, 2, 0]]
[[1678, 14303], [2, 2, 0]]
[[1679, 14321], [2, 2, 0]]
[[1680, 14323], [2, 2, 0]]
[[1681, 14327], [2, 2, 0]]
[[1682, 14341], [2, 2, 0]]
[[1683, 14347], [2, 2, 0]]
[[1684, 14369], [2, 2, 0]]
[[1685, 14387], [8, 8, 1]]
[[1686, 14389], [2, 2, 0]]
[[1687, 14401], [4, 2, 0]]
[[1688, 14407], [14, 2, 0]]
[[1689, 14411], [2, 2, 0]]
[[1690, 14419], [6, 2, 0]]
[[1691, 14423], [2, 2, 0]]
[[1692, 14431], [2, 2, 0]]
[[1693, 14437], [2, 2, 0]]
[[1694, 14447], [2, 2, 0]]
[[1695, 14449], [2, 2, 0]]
[[1696, 14461], [2, 2, 0]]
[[1697, 14479], [2, 2, 0]]
[[1698, 14489], [2, 2, 0]]
[[1699, 14503], [2, 2, 0]]
[[1700, 14519], [2, 2, 0]]
[[1701, 14533], [2, 2, 0]]
[[1702, 14537], [2, 2, 0]]
[[1703, 14543], [2, 2, 0]]
[[1704, 14549], [4, 2, 0]]
[[1705, 14551], [2, 2, 0]]
[[1706, 14557], [2, 2, 0]]
[[1707, 14561], "T0"]
[[1708, 14563], [2, 2, 0]]
[[1709, 14591], [2, 2, 0]]
[[1710, 14593], [4, 2, 0]]
[[1711, 14621], [2, 2, 0]]
[[1712, 14627], [2, 2, 0]]
[[1713, 14629], [2, 2, 0]]
[[1714, 14633], [2, 2, 0]]
[[1715, 14639], [2, 2, 0]]
[[1716, 14653], [2, 2, 0]]
[[1717, 14657], [2, 2, 0]]
[[1718, 14669], [2, 2, 0]]
[[1719, 14683], [2, 2, 0]]
[[1720, 14699], [4, 4, 0]]
[[1721, 14713], [2, 2, 0]]
[[1722, 14717], [2, 2, 0]]
[[1723, 14723], "S1"]
[[1724, 14731], [2, 2, 0]]
[[1725, 14737], [2, 2, 0]]
[[1726, 14741], [2, 2, 0]]
[[1727, 14747], [4, 4, 0]]
[[1728, 14753], [2, 2, 0]]
[[1729, 14759], [2, 2, 0]]
[[1730, 14767], [2, 2, 0]]
[[1731, 14771], [2, 2, 0]]
[[1732, 14779], [2, 2, 0]]
[[1733, 14783], [2, 2, 0]]
[[1734, 14797], [2, 2, 0]]
[[1735, 14813], [2, 2, 0]]
[[1736, 14821], [2, 2, 0]]
[[1737, 14827], [2, 2, 0]]
[[1738, 14831], [2, 2, 0]]
[[1739, 14843], [4, 4, 0]]
[[1740, 14851], [2, 2, 0]]
[[1741, 14867], [8, 8, 1]]
[[1742, 14869], [2, 2, 0]]
[[1743, 14879], [2, 2, 0]]
[[1744, 14887], [2, 2, 0]]
[[1745, 14891], [3, 2, 0]]
[[1746, 14897], [2, 2, 0]]
[[1747, 14923], [2, 2, 0]]
[[1748, 14929], [2, 2, 0]]
[[1749, 14939], [2, 2, 0]]
[[1750, 14947], [2, 2, 0]]
[[1751, 14951], [2, 2, 0]]
[[1752, 14957], [2, 2, 0]]
[[1753, 14969], [2, 2, 0]]
[[1754, 14983], [2, 2, 0]]
[[1755, 15013], [6, 2, 0]]
[[1756, 15017], [2, 2, 0]]
[[1757, 15031], [2, 2, 0]]
[[1758, 15053], [2, 2, 0]]
[[1759, 15061], [2, 2, 0]]
[[1760, 15073], [2, 2, 0]]
[[1761, 15077], "S0"]
[[1762, 15083], [4, 4, 0]]
[[1763, 15091], [2, 2, 0]]
[[1764, 15101], [4, 2, 0]]
[[1765, 15107], [2, 2, 0]]
[[1766, 15121], [2, 2, 0]]
[[1767, 15131], [2, 4, 0]]
[[1768, 15137], [2, 2, 0]]
[[1769, 15139], [2, 2, 0]]
[[1770, 15149], [2, 2, 0]]
[[1771, 15161], [2, 2, 0]]
[[1772, 15173], [4, 2, 0]]
[[1773, 15187], [2, 2, 0]]
[[1774, 15193], [2, 2, 0]]
[[1775, 15199], [2, 2, 0]]
[[1776, 15217], [2, 2, 0]]
[[1777, 15227], [2, 2, 0]]
[[1778, 15233], [2, 2, 0]]
[[1779, 15241], [2, 2, 0]]
[[1780, 15259], [2, 2, 0]]
[[1781, 15263], [2, 2, 0]]
[[1782, 15269], [2, 2, 0]]
[[1783, 15271], [2, 2, 0]]
[[1784, 15277], [2, 2, 0]]
[[1785, 15287], [2, 2, 0]]
[[1786, 15289], [2, 2, 0]]
[[1787, 15299], "?"]
[[1788, 15307], [2, 2, 0]]
[[1789, 15313], [2, 2, 0]]
[[1790, 15319], [2, 2, 0]]
[[1791, 15329], [2, 2, 0]]
[[1792, 15331], [2, 2, 0]]
[[1793, 15349], [2, 2, 0]]
[[1794, 15359], [2, 2, 0]]
[[1795, 15361], [4, 2, 0]]
[[1796, 15373], [2, 2, 0]]
[[1797, 15377], [2, 2, 0]]
[[1798, 15383], [2, 2, 0]]
[[1799, 15391], [2, 2, 0]]
[[1800, 15401], [2, 2, 0]]
[[1801, 15413], [4, 2, 0]]
[[1802, 15427], [2, 2, 0]]
[[1803, 15439], [2, 2, 0]]
[[1804, 15443], [2, 2, 0]]
[[1805, 15451], [4, 2, 0]]
[[1806, 15461], [2, 2, 0]]
[[1807, 15467], [2, 2, 0]]
[[1808, 15473], [2, 2, 0]]
[[1809, 15493], [2, 2, 0]]
[[1810, 15497], [2, 2, 0]]
[[1811, 15511], [2, 2, 0]]
[[1812, 15527], [2, 2, 0]]
[[1813, 15541], [2, 2, 0]]
[[1814, 15551], [2, 2, 0]]
[[1815, 15559], [2, 2, 0]]
[[1816, 15569], [2, 2, 0]]
[[1817, 15581], [2, 2, 0]]
[[1818, 15583], [2, 2, 0]]
[[1819, 15601], [2, 2, 0]]
[[1820, 15607], [6, 2, 0]]
[[1821, 15619], [2, 2, 0]]
[[1822, 15629], [6, 2, 0]]
[[1823, 15641], [2, 2, 0]]
[[1824, 15643], [2, 2, 0]]
[[1825, 15647], [2, 2, 0]]
[[1826, 15649], [2, 2, 0]]
[[1827, 15661], [2, 2, 0]]
[[1828, 15667], [2, 2, 0]]
[[1829, 15671], [2, 2, 0]]
[[1830, 15679], [2, 2, 0]]
[[1831, 15683], "S0"]
[[1832, 15727], [2, 2, 0]]
[[1833, 15731], [2, 2, 0]]
[[1834, 15733], [2, 2, 0]]
[[1835, 15737], [2, 2, 0]]
[[1836, 15739], [2, 2, 0]]
[[1837, 15749], [2, 2, 0]]
[[1838, 15761], [2, 2, 0]]
[[1839, 15767], [2, 2, 0]]
[[1840, 15773], "S0"]
[[1841, 15787], [2, 2, 0]]
[[1842, 15791], [2, 2, 0]]
[[1843, 15797], [2, 2, 0]]
[[1844, 15803], [4, 4, 0]]
[[1845, 15809], [2, 2, 0]]
[[1846, 15817], [2, 2, 0]]
[[1847, 15823], [2, 2, 0]]
[[1848, 15859], [2, 2, 0]]
[[1849, 15877], [6, 2, 0]]
[[1850, 15881], [2, 2, 0]]
[[1851, 15887], [2, 2, 0]]
[[1852, 15889], [2, 2, 0]]
[[1853, 15901], [2, 2, 0]]
[[1854, 15907], [2, 2, 0]]
[[1855, 15913], [2, 2, 0]]
[[1856, 15919], [2, 2, 0]]
[[1857, 15923], [2, 2, 0]]
[[1858, 15937], [2, 2, 0]]
[[1859, 15959], [2, 2, 0]]
[[1860, 15971], [4, 4, 0]]
[[1861, 15973], [10, 2, 0]]
[[1862, 15991], [2, 2, 0]]
[[1863, 16001], [4, 2, 0]]
[[1864, 16007], [2, 2, 0]]
[[1865, 16033], [2, 2, 0]]
[[1866, 16057], [2, 2, 0]]
[[1867, 16061], [2, 2, 0]]
[[1868, 16063], [2, 2, 0]]
[[1869, 16067], [3, 2, 0]]
[[1870, 16069], [2, 2, 0]]
[[1871, 16073], [2, 2, 0]]
[[1872, 16087], [2, 2, 0]]
[[1873, 16091], [2, 4, 0]]
[[1874, 16097], [2, 2, 0]]
[[1875, 16103], [2, 2, 0]]
[[1876, 16111], [2, 2, 0]]
[[1877, 16127], [2, 2, 0]]
[[1878, 16139], [2, 2, 0]]
[[1879, 16141], [2, 2, 0]]
[[1880, 16183], [2, 2, 0]]
[[1881, 16187], [4, 4, 0]]
[[1882, 16189], [2, 2, 0]]
[[1883, 16193], [2, 2, 0]]
[[1884, 16217], "S0"]
[[1885, 16223], [2, 2, 0]]
[[1886, 16229], "S0"]
[[1887, 16231], [2, 2, 0]]
[[1888, 16249], [2, 2, 0]]
[[1889, 16253], [2, 2, 0]]
[[1890, 16267], [2, 2, 0]]
[[1891, 16273], [2, 2, 0]]
[[1892, 16301], [2, 2, 0]]
[[1893, 16319], [2, 2, 0]]
[[1894, 16333], [2, 2, 0]]
[[1895, 16339], [2, 2, 0]]
[[1896, 16349], [2, 2, 0]]
[[1897, 16361], [2, 2, 0]]
[[1898, 16363], [2, 2, 0]]
[[1899, 16369], [2, 2, 0]]
[[1900, 16381], [2, 2, 0]]
[[1901, 16411], [2, 2, 0]]
[[1902, 16417], [2, 2, 0]]
[[1903, 16421], [2, 2, 0]]
[[1904, 16427], [2, 2, 0]]
[[1905, 16433], [2, 2, 0]]
[[1906, 16447], [2, 2, 0]]
[[1907, 16451], [2, 2, 0]]
[[1908, 16453], [2, 2, 0]]
[[1909, 16477], [2, 2, 0]]
[[1910, 16481], [2, 2, 0]]
[[1911, 16487], [2, 2, 0]]
[[1912, 16493], [2, 2, 0]]
[[1913, 16519], [2, 2, 0]]
[[1914, 16529], [2, 2, 0]]
[[1915, 16547], [16, 16, 1]]
[[1916, 16553], [2, 2, 0]]
[[1917, 16561], [2, 2, 0]]
[[1918, 16567], [2, 2, 0]]
[[1919, 16573], [2, 2, 0]]
[[1920, 16603], [2, 2, 0]]
[[1921, 16607], [2, 2, 0]]
[[1922, 16619], [2, 2, 0]]
[[1923, 16631], [2, 2, 0]]
[[1924, 16633], [2, 2, 0]]
[[1925, 16649], [2, 2, 0]]
[[1926, 16651], [2, 2, 0]]
[[1927, 16657], [2, 2, 0]]
[[1928, 16661], [4, 2, 0]]
[[1929, 16673], [2, 2, 0]]
[[1930, 16691], [2, 2, 0]]
[[1931, 16693], [2, 2, 0]]
[[1932, 16699], [2, 2, 0]]
[[1933, 16703], [2, 2, 0]]
[[1934, 16729], [2, 2, 0]]
[[1935, 16741], [2, 2, 0]]
[[1936, 16747], [2, 2, 0]]
[[1937, 16759], [2, 2, 0]]
[[1938, 16763], [4, 4, 0]]
[[1939, 16787], [2, 2, 0]]
[[1940, 16811], [4, 2, 0]]
[[1941, 16823], [2, 2, 0]]
[[1942, 16829], [2, 2, 0]]
[[1943, 16831], [2, 2, 0]]
[[1944, 16843], [2, 2, 0]]
[[1945, 16871], [2, 2, 0]]
[[1946, 16879], [2, 2, 0]]
[[1947, 16883], [2, 2, 0]]
[[1948, 16889], "S0"]
[[1949, 16901], [6, 2, 0]]
[[1950, 16903], [2, 2, 0]]
[[1951, 16921], [2, 2, 0]]
[[1952, 16927], [2, 2, 0]]
[[1953, 16931], [2, 2, 0]]
[[1954, 16937], [2, 2, 0]]
[[1955, 16943], [2, 2, 0]]
[[1956, 16963], [2, 2, 0]]
[[1957, 16979], [2, 4, 0]]
[[1958, 16981], [2, 2, 0]]
[[1959, 16987], [2, 2, 0]]
[[1960, 16993], [2, 2, 0]]
[[1961, 17011], [6, 2, 0]]
[[1962, 17021], [2, 2, 0]]
[[1963, 17027], "?"]
[[1964, 17029], [2, 2, 0]]
[[1965, 17033], [2, 2, 0]]
[[1966, 17041], [2, 2, 0]]
[[1967, 17047], [2, 2, 0]]
[[1968, 17053], [2, 2, 0]]
[[1969, 17077], [2, 2, 0]]
[[1970, 17093], [4, 2, 0]]
[[1971, 17099], [2, 2, 0]]
[[1972, 17107], [2, 2, 0]]
[[1973, 17117], [2, 2, 0]]
[[1974, 17123], "S0"]
[[1975, 17137], [2, 2, 0]]
[[1976, 17159], [2, 2, 0]]
[[1977, 17167], [2, 2, 0]]
[[1978, 17183], [10, 2, 0]]
[[1979, 17189], [4, 2, 0]]
[[1980, 17191], [2, 2, 0]]
[[1981, 17203], [2, 2, 0]]
[[1982, 17207], [2, 2, 0]]
[[1983, 17209], [2, 2, 0]]
[[1984, 17231], [2, 2, 0]]
[[1985, 17239], [2, 2, 0]]
[[1986, 17257], [2, 2, 0]]
[[1987, 17291], [2, 2, 0]]
[[1988, 17293], [2, 2, 0]]
[[1989, 17299], [6, 2, 0]]
[[1990, 17317], [2, 2, 0]]
[[1991, 17321], [2, 2, 0]]
[[1992, 17327], [2, 2, 0]]
[[1993, 17333], [2, 2, 0]]
[[1994, 17341], [2, 2, 0]]
[[1995, 17351], [2, 2, 0]]
[[1996, 17359], [2, 2, 0]]
[[1997, 17377], [2, 2, 0]]
[[1998, 17383], [2, 2, 0]]
[[1999, 17387], [4, 4, 0]]
[[2000, 17389], [2, 2, 0]]
time = 18h, 3min, 310 ms.
\end{verbatim}
\end{multicols}
}

\newpage
{\tiny
\begin{multicols}{4}
\begin{verbatim}
gp > NPs(2001,2262)
[[2001, 17393], [2, 2, 0]]
[[2002, 17401], [2, 2, 0]]
[[2003, 17417], [2, 2, 0]]
[[2004, 17419], [2, 2, 0]]
[[2005, 17431], [2, 2, 0]]
[[2006, 17443], [2, 2, 0]]
[[2007, 17449], [2, 2, 0]]
[[2008, 17467], [2, 2, 0]]
[[2009, 17471], [2, 2, 0]]
[[2010, 17477], [2, 2, 0]]
[[2011, 17483], [4, 4, 0]]
[[2012, 17489], [2, 2, 0]]
[[2013, 17491], [2, 2, 0]]
[[2014, 17497], [6, 2, 0]]
[[2015, 17509], [2, 2, 0]]
[[2016, 17519], [2, 2, 0]]
[[2017, 17539], [2, 2, 0]]
[[2018, 17551], [2, 2, 0]]
[[2019, 17569], [2, 2, 0]]
[[2020, 17573], "S0"]
[[2021, 17579], [2, 2, 0]]
[[2022, 17581], [2, 2, 0]]
[[2023, 17597], [2, 2, 0]]
[[2024, 17599], [2, 2, 0]]
[[2025, 17609], [2, 2, 0]]
[[2026, 17623], [2, 2, 0]]
[[2027, 17627], [2, 2, 0]]
[[2028, 17657], "S0"]
[[2029, 17659], [2, 2, 0]]
[[2030, 17669], "S0"]
[[2031, 17681], "?"]
[[2032, 17683], [2, 2, 0]]
[[2033, 17707], [2, 2, 0]]
[[2034, 17713], [2, 2, 0]]
[[2035, 17729], [2, 2, 0]]
[[2036, 17737], [2, 2, 0]]
[[2037, 17747], [2, 2, 0]]
[[2038, 17749], [2, 2, 0]]
[[2039, 17761], [2, 2, 0]]
[[2040, 17783], [2, 2, 0]]
[[2041, 17789], "S0"]
[[2042, 17791], [2, 2, 0]]
[[2043, 17807], [2, 2, 0]]
[[2044, 17827], "S0"]
[[2045, 17837], [4, 2, 0]]
[[2046, 17839], [2, 2, 0]]
[[2047, 17851], [2, 2, 0]]
[[2048, 17863], [2, 2, 0]]
[[2049, 17881], [2, 2, 0]]
[[2050, 17891], [2, 2, 0]]
[[2051, 17903], [2, 2, 0]]
[[2052, 17909], [2, 2, 0]]
[[2053, 17911], [2, 2, 0]]
[[2054, 17921], [4, 2, 0]]
[[2055, 17923], [2, 2, 0]]
[[2056, 17929], [2, 2, 0]]
[[2057, 17939], "S1"]
[[2058, 17957], [6, 2, 0]]
[[2059, 17959], [2, 2, 0]]
[[2060, 17971], [2, 2, 0]]
[[2061, 17977], [2, 2, 0]]
[[2062, 17981], [2, 2, 0]]
[[2063, 17987], [2, 2, 0]]
[[2064, 17989], [2, 2, 0]]
[[2065, 18013], [2, 2, 0]]
[[2066, 18041], "T0"]
[[2067, 18043], [2, 2, 0]]
[[2068, 18047], [2, 2, 0]]
[[2069, 18049], [2, 2, 0]]
[[2070, 18059], "?"]
[[2071, 18061], [2, 2, 0]]
[[2072, 18077], "S0"]
[[2073, 18089], [2, 2, 0]]
[[2074, 18097], "T0"]
[[2075, 18119], [2, 2, 0]]
[[2076, 18121], [2, 2, 0]]
[[2077, 18127], [2, 2, 0]]
[[2078, 18131], [2, 2, 0]]
[[2079, 18133], [2, 2, 0]]
[[2080, 18143], [2, 2, 0]]
[[2081, 18149], [2, 2, 0]]
[[2082, 18169], [2, 2, 0]]
[[2083, 18181], [2, 2, 0]]
[[2084, 18191], [2, 2, 0]]
[[2085, 18199], [2, 2, 0]]
[[2086, 18211], [2, 2, 0]]
[[2087, 18217], [2, 2, 0]]
[[2088, 18223], [2, 2, 0]]
[[2089, 18229], [2, 2, 0]]
[[2090, 18233], [2, 2, 0]]
[[2091, 18251], [4, 4, 0]]
[[2092, 18253], [6, 2, 0]]
[[2093, 18257], [2, 2, 0]]
[[2094, 18269], [3, 2, 0]]
[[2095, 18287], [2, 2, 0]]
[[2096, 18289], [2, 2, 0]]
[[2097, 18301], [2, 2, 0]]
[[2098, 18307], [2, 2, 0]]
[[2099, 18311], [2, 2, 0]]
[[2100, 18313], [2, 2, 0]]
[[2101, 18329], [2, 2, 0]]
[[2102, 18341], [2, 2, 0]]
[[2103, 18353], [2, 2, 0]]
[[2104, 18367], [2, 2, 0]]
[[2105, 18371], [2, 2, 0]]
[[2106, 18379], [2, 2, 0]]
[[2107, 18397], [2, 2, 0]]
[[2108, 18401], [2, 2, 0]]
[[2109, 18413], "S0"]
[[2110, 18427], [2, 2, 0]]
[[2111, 18433], [6, 2, 0]]
[[2112, 18439], [2, 2, 0]]
[[2113, 18443], [4, 4, 0]]
[[2114, 18451], [2, 2, 0]]
[[2115, 18457], [2, 2, 0]]
[[2116, 18461], [2, 2, 0]]
[[2117, 18481], "?"]
[[2118, 18493], [2, 2, 0]]
[[2119, 18503], [2, 2, 0]]
[[2120, 18517], [2, 2, 0]]
[[2121, 18521], [2, 2, 0]]
[[2122, 18523], [6, 2, 0]]
[[2123, 18539], [2, 2, 0]]
[[2124, 18541], [2, 2, 0]]
[[2125, 18553], [2, 2, 0]]
[[2126, 18583], [2, 2, 0]]
[[2127, 18587], [2, 2, 0]]
[[2128, 18593], [2, 2, 0]]
[[2129, 18617], [2, 2, 0]]
[[2130, 18637], [2, 2, 0]]
[[2131, 18661], [2, 2, 0]]
[[2132, 18671], [2, 2, 0]]
[[2133, 18679], [2, 2, 0]]
[[2134, 18691], [2, 2, 0]]
[[2135, 18701], [2, 2, 0]]
[[2136, 18713], "S0"]
[[2137, 18719], [2, 2, 0]]
[[2138, 18731], [4, 4, 0]]
[[2139, 18743], [2, 2, 0]]
[[2140, 18749], [2, 2, 0]]
[[2141, 18757], [2, 2, 0]]
[[2142, 18773], [2, 2, 0]]
[[2143, 18787], [2, 2, 0]]
[[2144, 18793], [2, 2, 0]]
[[2145, 18797], [2, 2, 0]]
[[2146, 18803], [2, 2, 0]]
[[2147, 18839], [2, 2, 0]]
[[2148, 18859], [2, 2, 0]]
[[2149, 18869], [2, 2, 0]]
[[2150, 18899], [2, 2, 0]]
[[2151, 18911], [2, 2, 0]]
[[2152, 18913], [2, 2, 0]]
[[2153, 18917], [2, 2, 0]]
[[2154, 18919], [2, 2, 0]]
[[2155, 18947], "?"]
[[2156, 18959], [2, 2, 0]]
[[2157, 18973], [2, 2, 0]]
[[2158, 18979], "S0"]
[[2159, 19001], [2, 2, 0]]
[[2160, 19009], [2, 2, 0]]
[[2161, 19013], [2, 2, 0]]
[[2162, 19031], [2, 2, 0]]
[[2163, 19037], [2, 2, 0]]
[[2164, 19051], [2, 2, 0]]
[[2165, 19069], [2, 2, 0]]
[[2166, 19073], [2, 2, 0]]
[[2167, 19079], [2, 2, 0]]
[[2168, 19081], [2, 2, 0]]
[[2169, 19087], [2, 2, 0]]
[[2170, 19121], [2, 2, 0]]
[[2171, 19139], "S0"]
[[2172, 19141], [2, 2, 0]]
[[2173, 19157], [3, 2, 0]]
[[2174, 19163], [2, 2, 0]]
[[2175, 19181], [2, 2, 0]]
[[2176, 19183], [2, 2, 0]]
[[2177, 19207], [2, 2, 0]]
[[2178, 19211], [4, 4, 0]]
[[2179, 19213], [2, 2, 0]]
[[2180, 19219], "S0"]
[[2181, 19231], [2, 2, 0]]
[[2182, 19237], [2, 2, 0]]
[[2183, 19249], [2, 2, 0]]
[[2184, 19259], [4, 4, 0]]
[[2185, 19267], [2, 2, 0]]
[[2186, 19273], [2, 2, 0]]
[[2187, 19289], [2, 2, 0]]
[[2188, 19301], [2, 2, 0]]
[[2189, 19309], [2, 2, 0]]
[[2190, 19319], [2, 2, 0]]
[[2191, 19333], [2, 2, 0]]
[[2192, 19373], [2, 2, 0]]
[[2193, 19379], "S1"]
[[2194, 19381], [2, 2, 0]]
[[2195, 19387], [2, 2, 0]]
[[2196, 19391], [2, 2, 0]]
[[2197, 19403], [2, 4, 0]]
[[2198, 19417], [2, 2, 0]]
[[2199, 19421], [2, 2, 0]]
[[2200, 19423], [2, 2, 0]]
[[2201, 19427], [2, 2, 0]]
[[2202, 19429], [2, 2, 0]]
[[2203, 19433], [2, 2, 0]]
[[2204, 19441], [4, 2, 0]]
[[2205, 19447], "S0"]
[[2206, 19457], [2, 2, 0]]
[[2207, 19463], [2, 2, 0]]
[[2208, 19469], [2, 2, 0]]
[[2209, 19471], [2, 2, 0]]
[[2210, 19477], [2, 2, 0]]
[[2211, 19483], [2, 2, 0]]
[[2212, 19489], [2, 2, 0]]
[[2213, 19501], [4, 2, 0]]
[[2214, 19507], "S0"]
[[2215, 19531], [2, 2, 0]]
[[2216, 19541], [2, 2, 0]]
[[2217, 19543], [2, 2, 0]]
[[2218, 19553], [2, 2, 0]]
[[2219, 19559], [2, 2, 0]]
[[2220, 19571], [2, 2, 0]]
[[2221, 19577], "S0"]
[[2222, 19583], [2, 2, 0]]
[[2223, 19597], [2, 2, 0]]
[[2224, 19603], "T0"]
[[2225, 19609], [2, 2, 0]]
[[2226, 19661], [2, 2, 0]]
[[2227, 19681], [2, 2, 0]]
[[2228, 19687], [2, 2, 0]]
[[2229, 19697], [2, 2, 0]]
[[2230, 19699], [2, 2, 0]]
[[2231, 19709], [2, 2, 0]]
[[2232, 19717], [2, 2, 0]]
[[2233, 19727], [2, 2, 0]]
[[2234, 19739], [2, 2, 0]]
[[2235, 19751], [2, 2, 0]]
[[2236, 19753], [2, 2, 0]]
[[2237, 19759], [2, 2, 0]]
[[2238, 19763], [2, 2, 0]]
[[2239, 19777], [2, 2, 0]]
[[2240, 19793], [2, 2, 0]]
[[2241, 19801], [2, 2, 0]]
[[2242, 19813], [2, 2, 0]]
[[2243, 19819], [2, 2, 0]]
[[2244, 19841], [2, 2, 0]]
[[2245, 19843], "S0"]
[[2246, 19853], [2, 2, 0]]
[[2247, 19861], [2, 2, 0]]
[[2248, 19867], [2, 2, 0]]
[[2249, 19889], [2, 2, 0]]
[[2250, 19891], [2, 2, 0]]
[[2251, 19913], [2, 2, 0]]
[[2252, 19919], [2, 2, 0]]
[[2253, 19927], [4, 2, 0]]
[[2254, 19937], [2, 2, 0]]
[[2255, 19949], [6, 2, 0]]
[[2256, 19961], [2, 2, 0]]
[[2257, 19963], [2, 2, 0]]
[[2258, 19973], "S0"]
[[2259, 19979], [2, 2, 0]]
[[2260, 19991], [2, 2, 0]]
[[2261, 19993], [2, 2, 0]]
[[2262, 19997], "S0"]
time = 11h, 38min, 29,910 ms.
\end{verbatim}
\end{multicols}
}

\section{Appendix}\label{seApp}
\begin{table}[h]
{\small
\caption{\cite[Proposition 3.6 (i)]{EM73} for $p< 20000$}
\begin{tabular}{l}
$p$: $\bQ(C_p)$ is not rational over $\bQ$\\\hline
47,79,167,191,223,239,263,359,367,383,431,439,463,479,503,\\
599,607,719,823,839,863,887,911,983,1031,1039,1087,1103,1223,1231,\\
1303,1319,1327,1367,1399,1439,1447,1487,1511,1543,1559,1583,1663,1759,1823,\\
1831,1847,1871,1879,2039,2063,2087,2111,2207,2239,2383,2399,2423,2447,2543,\\
2671,2687,2711,2767,2879,2903,2927,2999,3023,3119,3167,3191,3319,3343,3359,\\
3391,3407,3463,3559,3607,3623,3671,3767,3847,3863,3919,3967,4007,4079,4111,\\
4127,4271,4327,4391,4423,4463,4567,4583,4639,4679,4703,4759,4783,4799,4831,\\
4871,4919,4943,4967,5039,5087,5119,5231,5279,5303,5399,5431,5471,5479,5503,\\
5519,5591,5623,5639,5647,5711,5791,5807,5839,5879,5903,5927,6047,6079,6143,\\
6199,6263,6287,6311,6367,6599,6703,6719,6791,6863,6871,6911,6983,6991,7079,\\
7103,7127,7159,7207,7247,7487,7559,7583,7607,7639,7703,7727,7823,7879,7919,\\
7927,8039,8087,8111,8167,8231,8287,8311,8423,8431,8447,8543,8599,8647,8663,\\
8719,8783,8807,8831,8863,8887,8999,9007,9103,9239,9319,9391,9431,9463,9479,\\
9511,9623,9679,9719,9743,9767,9791,9839,9871,9887,9967,10007,10039,10079,10103,\\
10111,10159,10223,10247,10271,10303,10343,10391,10399,10463,10559,10607,10631,10663,10687,\\
10799,10847,10903,11047,11087,11119,11159,11239,11279,11311,11383,11399,11423,11447,11471,\\
11519,11527,11743,11783,11807,11839,11887,11903,11927,11959,12071,12119,12143,12239,12263,\\
12391,12479,12487,12503,12527,12647,12671,12703,12743,12791,12823,12911,12919,12959,12967,\\
12983,13007,13063,13103,13127,13327,13367,13399,13463,13487,13567,13679,13687,13711,13759,\\
13799,13831,13903,13967,13999,14071,14087,14143,14159,14207,14303,14327,14423,14431,14447,\\
14479,14503,14519,14543,14591,14639,14759,14767,14783,14831,14879,15199,15263,15271,15287,\\
15359,15383,15439,15527,15559,15647,15671,15727,15767,15791,15919,15959,15991,16007,16063,\\
16087,16103,16127,16223,16231,16319,16447,16487,16519,16567,16631,16703,16823,16879,16943,\\
17159,17167,17207,17231,17327,17359,17383,17471,17519,17599,17783,17791,17807,17863,17903,\\
17959,18047,18119,18143,18191,18223,18287,18311,18367,18439,18583,18671,18679,18743,18839,\\
18911,18959,19031,19079,19087,19183,19231,19319,19391,19447,19471,19543,19559,19583,19687,\\
19727,19759,19919,19991\\
\end{tabular}
}
\label{taEM1}
\end{table}

\begin{table}[h]
{\small
\caption{\cite[Proposition 3.6 (ii)]{EM73} for $p< 20000$}
\begin{tabular}{l}
$p$: $\bQ(C_p)$ is not rational over $\bQ$\\\hline
113,137,233,521,593,617,809,977,1033,1097,1129,1193,1289,1361,1489,\\
1553,1609,1777,1993,2129,2153,2281,2417,2441,2473,2609,2729,2833,2897,3049,\\
3089,3121,3209,3217,3433,3593,3761,3793,3881,4073,4241,4273,4297,4337,4457,\\
4561,4649,4657,4721,4817,4937,5009,5233,5297,5393,5417,5449,5521,5641,5737,\\
5897,6089,6217,6257,6353,6449,6473,6569,6577,6673,6737,6793,6833,6857,7121,\\
7177,7369,7433,7529,7537,7753,7793,7817,8009,8017,8081,8273,8297,8329,8369,\\
8521,8681,8689,8753,8849,8969,9041,9137,9161,9769,9833,9929,10289,10313,10321,\\
10889,10993,11057,11113,11177,11273,11497,11633,11657,11689,12041,12049,12073,12113,12329,\\
12433,12497,12553,12689,12713,12721,12809,13009,13297,13417,13513,13577,13649,13841,14033,\\
14057,14153,14249,14281,14321,14537,14633,14737,14929,15017,15217,15241,15313,15473,15497,\\
15569,15761,15817,15881,15889,16361,16369,16433,16529,16553,16649,16657,16937,17033,17041,\\
17257,17321,17393,17417,17449,17489,17609,17681,17737,18089,18097,18121,18257,18313,18353,\\
18481,19121,19249,19273,19433,19697,19753,19793,19889
\end{tabular}
}
\label{taEM2}
\end{table}


\vspace*{5mm}
\noindent
Akinari Hoshi\\
Department of Mathematics\\
Niigata University\\
8050 Ikarashi 2-no-cho\\
Nishi-ku, Niigata, 950-2181\\
Japan\\
E-mail: \texttt{hoshi}@\texttt{math.sc.niigata-u.ac.jp}\\
Web: \texttt{http://mathweb.sc.niigata-u.ac.jp/\~{}hoshi/}
\end{document}